\title[Invariant tori for area-preserving maps]
{Invariant tori for area-preserving maps with ultra-differentiable perturbation and Liouvillean frequency}
\author{Hongyu Cheng}
\address{
School of Mathematical Sciences, Tiangong University \\
Tianjin, 300071, P.R. China} \email{hychengmath@tiangong.edu.cn}
\author{Shimin Wang}
\address{
School of Mathematics, Shandong University\\
Jinan 250100, China} \email{shmwang@sdu.edu.cn}
\author{Fenfen Wang}
\address{
School of Mathematical Sciences and Laurent Mathematics Center, Sichuan Normal University, Chengdu 610066, P.R.China.} \email{ffenwang@hotmail.com}
\newtheorem{theorem}{Theorem}[section]
\newtheorem{lemma}{Lemma}[section]
\newtheorem{definition}{Definition}[section]
\newtheorem{proposition}[lemma]{Proposition}
\numberwithin{equation}{section}
\theoremstyle{plain}
\DeclareMathOperator{\meas}{meas}
\newcommand{\NN}{\mathbb{N}}
\newcommand{\RR}{\mathbb{R}}
\newcommand{\TT}{\mathbb{T}}
\newcommand{\me}{\mathrm{e}}   
\newcommand{\mi}{\mathrm{i}}     
\newcommand{\dif}{\mathrm{d}}
\begin{document}
\begin{abstract}
We prove the existence of invariant tori to the area-preserving maps defined on $ \mathbb{R}^2\times\mathbb{T} $
	\begin{equation*}
		\overline{x}=F(x,\theta),
		\qquad \overline{\theta}=\theta+\alpha\, \,(\alpha\in \mathbb{R}\setminus\mathbb{Q}),
	\end{equation*}
where $ F $ is closed to a linear rotation, and the perturbation
is ultra-differentiable in $ \theta\in \mathbb{T},$ which is very closed to $C^{\infty}$ regularity.
Moreover, we assume that the frequency $\alpha$ is any irrational number without other arithmetic conditions and the smallness of the perturbation does not depend on $\alpha$.
Thus, both the difficulties from the ultra-differentiability of the perturbation and Liouvillean frequency will appear in this work. The proof of the main result is based on the Kolmogorov-Arnold-Moser (KAM) scheme about the area-preserving maps with some new techniques.
\end{abstract}

\subjclass[2010]{
	37J40; 
	70K43; 
}
\keywords{Invariant tori; ultra-differentiability; arithmetic condition; Liouvillean frequency}.
\maketitle	


\section{Introduction and main result}\label{sec:intro}
For the area-preserving maps, the Moser's Twist-mapping Theorem (\cite{Moser62,Siegelm71}) guarantees the existence of invariant simply closed curves surrounding the elliptic fixed point with Diophantine frequency.
In \cite{Russmann02}, H. R\"{u}ssmann, by assuming that the frequency of linearization satisfies Brjuno condition and employing the Moser's Twist-mapping Theorem, proved the stability of an elliptic fixed point of the area-preserving maps.
The authors, in \cite{delaLlave15,delaLlave06,delaLlave066,delaLlave03,delaLlave033}, constructed the quasi-periodic solutions and invariant manifolds to the quasi-periodic maps around the hyperbolic fixed point by the parameterization method. Moreover, in \cite{KAvila06,AvilaJ09,AvilaF11,HouY12}, the authors proved the reducibility of quasi-periodic $SL(2, \mathbb{R})$ cocycles and quasi-periodic linear ODEs in $sl(2,\mathbb{R})$ with frequency
$\omega=(1,\alpha),\ \forall \alpha\in\mathbb{R}\setminus\mathbb{Q}.$
Recently, in \cite{KrikorianW18}, Krikorian-Wang-You-Zhou extended the result in \cite{AvilaF11} to the nonlinear system by studying the linearization of the circle flow. Later, Wang-You-Zhou in \cite{WangY17,WangY16}, constructed the response solutions to the nonlinear finite-dimensional harmonic oscillators and investigated the boundedness of nonlinear differential equation with Liouvillean frequency $\omega=(1,\alpha),\ \forall \alpha\in\mathbb{R}\setminus\mathbb{Q}.$

The regularities with $\theta$ in the results mentioned above are all analytic.
In \cite{Cheng22}, the authors recently proved the reducibility of quasi-periodic $SL(2, \mathbb{R})$ cocycles in the ultra-differentiable setting, i.e., the function for the fiber component just ultra-differentiable in the variable $\theta\in\mathbb{T}$.
Motivated by the above works, we consider the area-preserving map in the ultra-differentiable setting:
\begin{equation}\label{map}
	\overline{x}=F(x,\theta),
	\qquad \overline{\theta}=\theta+\alpha.
\end{equation}
Here the skew-product system \eqref{map} is a bundle map on $ \mathbb{R}^2 \times \mathbb{T} $ ($ \mathbb{T} = \mathbb{R} / \mathbb{Z} $), and $\alpha\in \mathbb{R}\setminus\mathbb{Q}$ is the rotation vector.
Note that the area-preserving map means $ F(\cdot,\theta) : \mathbb{R}^2 \rightarrow \mathbb{R}^2 $ is area-preserving for each $ \theta \in \mathbb{T} $.
So we call \eqref{map} as an area-preserving system.
We assume that eigenvalues of $ DF(0,\theta) $ are modulus of $ 1 $.
More concretely, the function $F$ depending on $ \lambda \in \mathcal{O} = [\frac{1}{4}, \frac{3}{4}] $ has the decomposition
\begin{equation}\label{decomposition}
	F(x, \theta) := F(x, \theta, \lambda) = L(\lambda) x + \varepsilon N(x, \theta, \lambda),
\end{equation}
where
\begin{equation*}
	L(\lambda) = \left(
		\begin{array}{cc}
			\cos 2\pi \lambda & -\sin 2\pi \lambda \\
			\sin 2\pi \lambda &  \cos 2\pi \lambda
		\end{array}
		\right).
\end{equation*}
Moreover, the perturbation $ N $ is just ultra-differentiable in $ \theta \in \mathbb{T} $ and $ C^{1}_{W}$ ($ C^{1} $-smooth in the sense of Whitney) in the parameter $ \lambda \in \mathcal{O} $.


We quantify the regularity of the function $f:\mathbb{T}^d\rightarrow \mathbb{R}$.
To this end, we introduce the weight function:	
\begin{equation*}
	\Lambda: [0,\infty)\rightarrow [0, \infty), \quad y\mapsto \Lambda(y),
\end{equation*}
which is continuous and strictly increasing on $\RR^{+}.$
Expanding a smooth function $f\in C^{\infty}(\mathbb{T}^d,\mathbb{R})$ in Fourier series:
\begin{equation*}
	f(\theta)=\sum_{k\in \mathbb{Z}^d} \widehat{f}_{k} \me^{\mathrm{i} 2\pi \langle k, \theta \rangle},
\end{equation*}
we say it is $\Lambda$-ultra-differentiable if there exists $r>0$, which we call a ``width'', such that
\begin{equation*}
	\|f\|_{r} := \sum_{k\in \mathbb{Z}^d} |\widehat{f}_{k}| \me^{\Lambda(2\pi |k|r)}<\infty.
\end{equation*}
Here $ |k| = \sum_{i=1}^{d} |k_i| $, $ \forall k \in \mathbb{Z}^d $.
The real-analytic and $\delta$-Gevrey regularity correspond to $\Lambda(y)=y$ and $\Lambda(y)=y^{\delta}\,(0<\delta<1)$, respectively.

An invariant torus for the skew-product system \eqref{map} is given by a section $K:\mathbb{T} \times \mathcal{O}\rightarrow \mathbb{R}^2 $ satisfying
\begin{equation}\label{embedding}
	F(K(\theta,\lambda),\theta,\lambda)=K(\theta+\alpha,\lambda).
\end{equation}
Our result shows that for the area-preserving map $F$ defined by \eqref{decomposition}, the system \eqref{map} has an invariant torus with frequency $\alpha$ when $\varepsilon$ is small enough.
 Our main result is stated in the following:
\begin{theorem}\label{mainthm}
	We consider the invariance equation \eqref{embedding} with $F$ given by \eqref{decomposition}.
	Let $U \subseteq \mathbb{R}^2$ be an open neighborhood around the origin.
	Assume that $N:U\times\mathbb{T}\times\mathcal{O}\rightarrow \mathbb{R}^2$ is analytic  in $ x \in  U $, $C_{W}^1$ in $\lambda\in \mathcal{O}$ and
	$\Lambda$-ultra-differentiable in $ \theta \in \mathbb{T}$.
	Moreover, assume that the weight function $ \Lambda $ satisfies
	\begin{itemize}
		\item [(\textbf{H1})]
			Subadditivity: $ \Lambda(x+y) \leq \Lambda(x) + \Lambda(y), \, \forall\,  x,y \in [0,\infty) $;
		\item [(\textbf{H2})]
			Logarithmic growth:  $ \Gamma(x):=\frac{x\Lambda'(x)}{\ln x} \rightarrow \infty $ monotonically as $x\rightarrow\infty.$
\end{itemize}
Then, for any given sufficiently small $\gamma>0$,  there exist $\varepsilon_{0}>0$ depending on $\gamma,\, N$ and a Cantor subset $\mathcal{O}_{\gamma}\subseteq\mathcal{O}$ with  ${\meas}(\mathcal{O}\setminus\mathcal{O}_{\gamma})=O(\gamma)$
	such that, for any $\lambda\in \mathcal{O}_{\gamma}$, there exists a $C^{\infty}$ map $ K(\cdot, \lambda):\mathbb{T} \rightarrow U $ satisfying the equation \eqref{embedding} provided
	$0<\varepsilon<\varepsilon_{0}$, i.e., the invariant torus for the area-preserving system \eqref{map}.
\end{theorem}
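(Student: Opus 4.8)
The plan is to solve \eqref{embedding} by a Newton-type KAM iteration designed to cope simultaneously with the ultra-differentiable class and with the Liouvillean frequency. At stage $n$ I keep a map $F_n(x,\theta,\lambda)=L(\lambda_n)x+\varepsilon N_n(x,\theta,\lambda)$ together with an approximate solution $K_n$, whose error $E_n(\theta)=F_n(K_n(\theta),\theta)-K_n(\theta+\alpha)$ satisfies $\|E_n\|_{r_n}\le\varepsilon_n$ on a width $r_n\downarrow r_\infty>0$ and on shrinking complex neighborhoods of the range of $K_n$ in the $x$-variable; analyticity of $N$ in $x$ lets me dominate the super-linear remainders by Cauchy estimates. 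First I center the map at the current torus by $x=K_n(\theta)+y$, which yields $\widehat F_n(y,\theta)=E_n(\theta)+A_n(\theta)y+O(y^2)$ with $A_n(\theta)=D_xF_n(K_n(\theta),\theta)=L(\lambda_n)+O(\varepsilon)$; area-preservation gives $A_n(\theta)\in SL(2,\RR)$, so the fibered cocycle $(\alpha,A_n)$ is a perturbation of the constant elliptic cocycle $(\alpha,L(\lambda_n))$. The step then splits into (i) a translation $y=\Delta_n(\theta)+w$ removing $E_n$ to first order, where $\Delta_n$ solves the twisted cohomological equation $\Delta_n(\theta+\alpha)-A_n(\theta)\Delta_n(\theta)=-E_n(\theta)$, and (ii) a near-identity conjugacy $w\mapsto P_n(\theta)w$ pushing $A_n$ toward a new constant $L(\lambda_{n+1})$ with $\lambda_{n+1}=\lambda_n+O(\varepsilon_n)$, producing $K_{n+1}=K_n+\Delta_n$, a new perturbation $N_{n+1}$, and a bound $\varepsilon_{n+1}\le C\varepsilon_n^{1+\sigma}$ for some $\sigma>0$.

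Everything comes down to solving these two cohomological equations in the norm $\|\cdot\|_r$ with a summable loss of width. Diagonalizing $L(\lambda_n)$ (set $z=x_1+\mi x_2$, so $L(\lambda_n)z=\me^{2\pi\mi\lambda_n}z$), the divisors in step (i) and in the off-diagonal part of step (ii) have the shapes $\me^{2\pi\mi(k\alpha\mp\lambda_n)}-1$ and $\me^{2\pi\mi(k\alpha\pm2\lambda_n)}-1$; since $\lambda$ is a genuine parameter I define $\mathcal O_\gamma$ by the non-resonance conditions $\mathrm{dist}(k\alpha\mp\lambda,\ZZ)\ge\gamma(1+|k|)^{-2}$ and $\mathrm{dist}(k\alpha\pm2\lambda,\ZZ)\ge\gamma(1+|k|)^{-2}$, $k\in\ZZ$. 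The crucial gain is that these conditions — and hence the threshold $\varepsilon_0$ — involve $\alpha$ only through the exclusion of strips in $\lambda$ of total measure $\sum_k\gamma(1+|k|)^{-2}=O(\gamma)$, which does not deteriorate with the arithmetic quality of $\alpha$. The only genuinely Liouvillean divisors, $\me^{2\pi\mi k\alpha}-1$, appear solely in the diagonal part of the conjugacy (ii); these I do not invert. Instead, truncating at an order $N_n$ and following the renormalization/continued-fraction philosophy of \cite{AvilaF11,WangY17,KrikorianW18}, I eliminate only the modes $|k|\le N_n$ with $\mathrm{dist}(k\alpha,\ZZ)\ge\eta_n$ and retain the finitely many resonant modes (multiples of a continued-fraction denominator of $\alpha$) as a low-amplitude trigonometric-polynomial correction to the constant part. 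The Rüssmann-type estimate for the solvable part reads $\|\Delta_n\|_{r_n-\rho_n}\le C\gamma^{-1}\Theta(\rho_n)\|E_n\|_{r_n}$, with a divisor loss $\Theta(\rho_n)$ governed by the weight $(1+|k|)^{-2}$ against $\Lambda$; hypothesis (\textbf{H1}) makes $\|\cdot\|_r$ a Banach algebra (so products and the compositions in the nonlinear remainders are controlled), while hypothesis (\textbf{H2}), i.e.\ $\Gamma(x)=x\Lambda'(x)/\ln x\to\infty$, forces $\Lambda$ to dominate $\log^2$, which is exactly what lets $\sum_n\rho_n<r_0-r_\infty$ coexist with $\varepsilon_n^{1+\sigma}\Theta(\rho_n)\le\varepsilon_{n+1}$ — the super-exponential decay $\varepsilon_n\lesssim\varepsilon_0^{(1+\sigma)^n}$ beating the merely sub-exponential accumulated loss.

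To schedule the iteration against the Liouvillean frequency I would organize the stages into bridges lying between consecutive bad resonant denominators of $\alpha$: along a bridge no divisor is worse than a fixed polynomial in the frequency, so the conjugacy there is super-effective and drives the derivative perturbation, and with it $\varepsilon_n$, below any prescribed super-exponentially small threshold before the next resonant denominator $q_j$ is met; the trigonometric correction carried at that scale then has amplitude no larger than that inherited threshold rather than $\varepsilon$, so absorbing it into the subsequent stages costs only a summable amount of width. Passing to the limit, for $\lambda\in\mathcal O_\gamma=\bigcap_n\mathcal O_n$ one obtains $K_n(\cdot,\lambda)\to K_\infty(\cdot,\lambda)$ in $\|\cdot\|_{r_\infty}$, hence a $C^\infty$ (indeed $\Lambda$-ultra-differentiable) solution of \eqref{embedding} with values in $U$; the $C^1_W$-dependence on $\lambda$ is obtained by propagating the $\partial_\lambda$-estimates through the scheme — these involve at worst squared divisors, of size $(1+|k|)^{4}/\gamma^{2}$, which are absorbed by a harmless fixed extra loss of width since $\Lambda$ dominates $\log^2$ — followed by the Whitney extension theorem, while $\meas(\mathcal O\setminus\mathcal O_\gamma)=O(\gamma)$ is read off directly from the excluded strips.

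I expect the main obstacle to be the reconciliation of the continued-fraction bookkeeping demanded by the Liouvillean frequency with the very tight width budget of the ultra-differentiable class: one must establish the twisted cohomological estimate in $\|\cdot\|_r$ with the resonant trigonometric-polynomial correction present, verify that the amplitude of that correction is genuinely controlled by the super-exponentially small error inherited from the preceding bridge and not by $\varepsilon$ itself, and check that the sequences $N_n$, $\eta_n$, $\rho_n$ can be chosen simultaneously — that is, that (\textbf{H2}) leaves enough room for the Rüssmann losses and for the bridge construction at once. A secondary technical point is to keep the drifting quantity $\lambda_n$ (the fibered rotation number of $A_n$) inside $\mathcal O_n$, which requires the conditions defining $\mathcal O_{n+1}$ to be imposed with margins that shrink no faster than $\varepsilon_n$.
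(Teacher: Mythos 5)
Your overall Newton/KAM architecture and the two cohomological equations you identify are the same structural skeleton as the paper's KAM Theorem~\ref{kamthm}: one equation for the translation (the paper's $\Delta$) and one for the conjugacy (the paper's $D$), with the diagonal small divisors $1-\me^{2\pi\mi k\alpha}$ isolated as the Liouvillean obstruction and the off-diagonal divisors controlled by a parameter-exclusion set $\mathcal O_\gamma$. But there is a genuine gap, and it is visible in what the theorem actually claims: the limit $K(\cdot,\lambda)$ is only asserted to be $C^\infty$, not $\Lambda$-ultra-differentiable. Your proposal asserts convergence in $\|\cdot\|_{r_\infty}$ with $r_\infty>0$ and an ultra-differentiable limit. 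That is false in the Liouvillean regime, and the reason is precisely the diagonal part you say you ``do not invert.''

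Here is why the width budget collapses. Whether you push the diagonal part of the conjugacy or, as the paper does, leave $\mathrm{diag}\{W_1,\overline W_1\}$ in the normal form, you are forced to handle a cohomological equation of the form $\mathcal B(\theta+\alpha)-\mathcal B(\theta)=-\mathcal T_{\overline Q_n}B(\theta)+[B]_\theta$ (the paper's Lemma~\ref{teq}). Its Fourier coefficients obey $|\widehat{\mathcal B}_k| \lesssim \overline Q_n\,|\widehat B_k|$ for $0<|k|<\overline Q_n$, because $\|k\alpha\|_{\mathbb T}$ is only bounded below by about $\overline Q_n^{-1}$ there. Even though $B=O(\varepsilon^{1/3})$, for Liouvillean $\alpha$ the factor $\overline Q_n$ is uncontrolled, so $\mathcal B$ — and worse, $\me^{\mi2\pi\mathcal B}$ — is not uniformly bounded on any fixed strip. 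Your plan of ``truncating at $N_n$, eliminating modes with $\mathrm{dist}(k\alpha,\mathbb Z)\ge\eta_n$, and keeping the resonant multiples of $q_j$ as a low-amplitude correction'' does not fix this: for $0<|k|<q_n$ the divisors are all of order $q_n^{-1}$, so either $\eta_n$ is of that order (and the solved part of $\mathcal B$ is of size $q_n\cdot O(\varepsilon^{1/3})$, unbounded for Liouvillean $\alpha$) or $\eta_n$ is larger and you leave behind a correction of size $O(\varepsilon^{1/3})$ on a growing set of modes — it is not ``super-exponentially small'' as you assert. The way out in the literature you cite (and in this paper) is the You--Zhou device: $\mathcal B$ is real on $\mathbb R$, hence $\mathrm{Im}\,\widetilde{\mathcal B}$ vanishes there, and one controls $\|\mathrm{Im}\,\widetilde{\mathcal B}\|$ instead of $\|\mathcal B\|$ by shrinking the width to $\overline r = 2\overline Q_n^{-2}r_0$ (see the proof of Lemma~\ref{teq} and the choice $r_{n+1}=\overline Q_n^{-2}r_0$ in~\eqref{definitionparameters}). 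This trick is not optional: without it $\me^{\mi2\pi\mathcal B}$ is unbounded, and with it the width $r_n$ necessarily tends to $0$, so the composed transformation $\Phi_*$ can only be $C^\infty$. Your assertion that ``(\textbf{H2}) leaves enough room'' for $\sum\rho_n<r_0-r_\infty$ cannot hold; (\textbf{H2}) is used for the tail estimate of $\mathcal R_K$ (Lemma~\ref{trunctes}), not to rescue a positive final width.

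A secondary, related issue: you describe the conjugacy step as ``pushing $A_n$ toward a new constant $L(\lambda_{n+1})$ with a low-amplitude trigonometric-polynomial correction.'' The paper's normal form is instead $A(\lambda)+V_n(\theta,\lambda)$ with $V_n=\mathrm{diag}\{G_n,\overline G_n\}$ genuinely $\theta$-dependent and of size $O(\varepsilon^{1/3})$ throughout the iteration; the reduction to a nearly-constant linear part never happens. The variable-coefficient cohomological equations \eqref{homo-eq3}--\eqref{homo-eq4} that result are solved by first writing $\me^{\mi2\pi\lambda}+G=(1+\rho)\me^{\mi2\pi(\lambda+B)}$ (Lemma~\ref{expoB}, using area-preservation to make $\rho$ quadratically small), then performing the $\mathcal B$-substitution above, and then invoking a diagonally dominant inversion (Proposition~\ref{solveequation}). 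Your scheme as written omits both the polar decomposition and the $\mathrm{Im}\,\mathcal B$-trick, and these are the two ingredients on which the Liouvillean case actually turns. The remaining parts of your outline — the measure estimate from the excluded strips, the Banach-algebra use of (\textbf{H1}), the CD-bridge scheduling — are sound and match the paper's.
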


The assumptions (\textbf{H1}) and (\textbf{H2}) are not restrictive.
Indeed, the sub-additive condition (\textbf{H1}) is a very classical assumption in the literature, which guarantees the space of $\Lambda$-ultra-differentiable functions forms a Banach algebra.
In view of our choice of norms, the condition
(\textbf{H2}) is exactly what ensures we have an analogue of the Cauchy estimates for
analytic functions. Moreover, the space of $\Lambda$-ultra-differentiable functions with $\Lambda$ satisfying (\textbf{H1}) and (\textbf{H2}) is very close to the $C^{\infty}$ space. It covers the analytic class $\Lambda(x)=x,$ Gevrey class $\Lambda(x)=x^{\sigma} (0<\sigma<1),$ and other ultra-differentiable
classes $\Lambda(x)=\exp\{(\ln x)^{\sigma}\} (0<\sigma<1),$ $\Lambda(x)=(\ln x)^{\beta}(\beta>1).$

The fact that the map $F$ defined by \eqref{decomposition} is area-preserving  plays a fundamental role in our discussions. We will follow the philosophy dealing with cocycles in spectrum theory of linear operator to construct the area-preserving change of variables, see \cite{AvilaF11,Cheng22,Houwang20,Caiw21,Eliasson92,Zhangz17} and the references therein.
	
\section{Preliminaries}\label{sec:pre}
	
\subsection{Area-preserving map}
\label{subsectionone}
In this subsection, we will give a brief introduction about area-preserving map. The map, in the $(x,y)-$variables, is given by
\begin{equation}\label{modelmap}
	\begin{split}
		x_{1}&=f(x,y)=x\cos\gamma_{0}-y\sin\gamma_{0}+a_{11}x+a_{12}y
+\mathrm{h.o.t.},\\
y_{1}&=g(x,y)=x\sin\gamma_{0}+y\cos\gamma_{0}+a_{21}x+a_{22}y
+\mathrm{h.o.t.},
	\end{split}
\end{equation}
where $\mathrm{h.o.t.}$ are higher order terms in $x,y$ with real coefficients. The map \eqref{modelmap} is area-preserving if
\begin{equation}\label{area-preservingcondition}
	\det \frac{\partial(f,g)}{\partial(x,y)}(x,y)
=1, \quad  \forall(x,y)\in\RR^{2}.
\end{equation}
Particularly, set $(x,y)=(0,0)$ in \eqref{area-preservingcondition}, then we get
\begin{equation}\label{area-preservingconditions}
\det\Big\{\Big(
	\begin{array}{cc}
		\cos\gamma_{0}  & -\sin\gamma_{0} \\
		\sin\gamma_{0}  &  \cos\gamma_{0}
	\end{array}
	\Big)
+\Big(
	\begin{array}{cc}
		a_{11}  & a_{12} \\
		a_{21}  &  a_{22}
	\end{array}
	\Big)\Big\}=1.
\end{equation}
The equality in \eqref{area-preservingconditions} is one of fundamental ingredients in our discussions.

Define the area-preserving change of variables
\begin{equation*}
	\begin{split}
		x&=\phi(\xi,\eta)=\xi+\cdots,\quad x_{1}=\phi(\xi_{1},\eta_{1}),\\
		y&=\psi(\xi,\eta)=\eta+\cdots,\quad y_{1}=\psi(\xi_{1},\eta_{1}),\
		\phi_{\xi}\psi_{\eta}-\phi_{\eta}\psi_{\xi}=1,
	\end{split}
\end{equation*}
where $\phi(\xi,\eta)-\xi$ and $\psi(\xi,\eta)-\eta$
are formal power series in $\xi,\eta$ with real coefficients,
which begin with terms of second order and, under some conditions, transforms the map
\eqref{modelmap} into the Birkhoff normal form
\begin{equation*}
	\begin{split}
\xi_{1}&=\xi\cos\Gamma(\xi,\eta)-\eta\sin\Gamma(\xi,\eta),\\
		\eta_{1}&=\xi\sin\Gamma(\xi,\eta)+\eta\cos\Gamma(\xi,\eta),\ \ \
		\Gamma(\xi,\eta)=\sum_{j=0}^{\infty}\gamma_{j}(\xi^{2}+\eta^{2})^{j},
	\end{split}
\end{equation*}
where $\gamma_{j},j=1,\cdots,$ are the uniquely determined Birkhoff constants. See discussions in section $23$ of the book  \cite{Siegelm71} and \cite{Moser62,Moser67} for details.

\subsection{Functional setting}
Let $ \mathcal{O} = [\frac{1}{4}, \frac{3}{4}] $ and define the space
\begin{equation*}
	\mathcal{C}^2 = \big\{ V=(v,\overline{v})^{\mathrm{T}}:\,v\in \mathbb{C} \big\},
\end{equation*}
and
\begin{equation*}
	\mathcal{C}^{2\times 2} =
	\Big\{ W =\Big(
	\begin{array}{cc}
		W_1  & W_2 \\
		\overline{W}_{2}  &  \overline{W}_1
	\end{array}
	\Big): W_1,W_2\in \mathbb{C} \Big\},\
SU(1,1)=\{W\in\mathcal{C}^{2\times 2}:\mathrm{det}W=1\}.
\end{equation*}
The Lie algebra of $SU(1,1)$ is
\begin{equation*}
su(1,1)=\{W\in\mathcal{C}^{2\times 2}:W_1\in\mi\RR,W_2\in \mathbb{C}\}.
\end{equation*}

For a $C^{1}_{W}$
function $g:\mathcal{O}\rightarrow*$ ($*$ will usually
denote $\mathbb{R}$,\,$\mathbb{C}$, $\mathcal{C}^2$,\,$\mathcal{C}^{2\times 2}$), we define its norm as
\begin{equation*}
	|g|_{\mathcal{O}}:=\sup_{\lambda \in\mathcal{O}}\big(|g(\lambda)|+\big|\partial_{\lambda} g(\lambda)\big|\big).
\end{equation*}
For the function $f(\theta,\lambda)=\sum\limits_{k\in\mathbb{Z}}\widehat{f}_{k}(\lambda) \me^{\mathrm{i} 2\pi k \theta}$, which is $\Lambda$-ultra-differentiable in $\theta\in \mathbb{T}$ with the width $r$ and $C_{W}^1$ in $\lambda\in \mathcal{O}$, we denote
\begin{equation*}
	\Lambda_{r}( \mathbb{T}\times \mathcal{O},*) = \big\{
	f:\,\,\|f\|_{r,\mathcal {O}}=\sum_{k\in \mathbb{Z}}|\widehat{f}_{k}|_{\mathcal{O}} \me^{\Lambda(2\pi |k|r)}<\infty
	\big\},
\end{equation*}
and for $f(x,\theta,\lambda)=\sum\limits_{m\in\mathbb{N}^2}f_{m}(\theta,\lambda)x^m$, which is $\Lambda$-ultra-differentiable in $\theta\in \mathbb{T}$ with the width $r$, analytic in $x\in \{ x\in\mathcal{C}^2:\, \|x\|_{\mathcal{C}^2}<s \}$ and $C_{W}^1$ in $\lambda\in \mathcal{O}$, we denote
\begin{equation*}
	\Lambda_{r,s}(\mathcal{C}^2\times\mathbb{T}\times \mathcal{O},*)= \Big\{
	f: \,\,\|f\|_{r,s,\mathcal {O}}=\sup_{ \|x\|_{\mathcal{C}^2}< s}\sum_{m\in \mathbb{N}^2}\|f_m\|_{r,\mathcal {O}}\cdot |x|^{|m|}<\infty
	\Big\}.
\end{equation*}
Moreover, we define the truncation operator $\mathcal{T}_{K}$ and projection operator $\mathcal{R}_{K}$ as
\begin{equation*}
	\mathcal{T}_{K}f(\theta,\lambda):=\sum_{|k|< K}\widehat{f}_{k}(\lambda) \me^{\mathrm{i} 2\pi k\theta},
	\qquad
	\mathcal{R}_{K}f(\theta,\lambda):=\sum_{|k|\geq K}\widehat{f}_{k}(\lambda) \me^{\mathrm{i} 2\pi k\theta}.
\end{equation*}
The average $[f(\theta,\lambda)]_{\theta}$ of $f(\theta,\lambda)$ over $\mathbb{T}$ is defined as
\begin{equation*} [f(\theta,\lambda)]_{\theta}:=\int_{\mathbb{T}}f(\theta,\lambda) \dif \theta=\widehat{f}_0(\lambda).
\end{equation*}

\subsection{Continued fraction expansion}\label{sec:cdbridge}
Let us recall some properties of irrational number.
Given an irrational number $\alpha\in (0,1),$ we define
\begin{equation*}
	a_{0}=0,\quad \alpha_{0}=\alpha,
\end{equation*}
and inductively for $k \geq 1,$
\begin{equation*}
	a_{k}=[\alpha_{k-1}^{-1}],\quad \alpha_{k}=\alpha_{k-1}^{-1}-a_{k},
\end{equation*}
where $[\alpha]:=\max\{m\in \mathbb{Z}:\,m\leq \alpha\}$.

Let $p_{0}=0,p_{1}=1,q_{0}=1,q_{1}=a_{1},$ and recursively,
\begin{equation*}
	\begin{split}
		p_{k} = a_{k}p_{k-1}+p_{k-2},\ \
		q_{k} = a_{k}q_{k-1}+q_{k-2}.
	\end{split}
\end{equation*}
Then $ \{q_{n}\} $ is the sequence of denominators of the best rational approximations for $\alpha$. It satisfies
\begin{equation*}
	\|k\alpha\|_{\mathbb{T}}\geq\|q_{n-1}\alpha\|_{\mathbb{T}}, \,\,\text{ for } 1 \leq k<q_{n},
\end{equation*}
and
\begin{equation*}
	\frac{1}{q_{n}+q_{n+1}}<\|q_{n}\alpha\|_{\mathbb{T}}
\leq\frac{1}{q_{n+1}},
\end{equation*}
where $\|x\|_{\mathbb{T}}:=\inf\limits_{p\in\mathbb{Z}}|x-p|.$

For the sequence $\{q_{n}\}$, we fix a special subsequence $\{q_{n_{k}}\}$. For simplicity, we denote the subsequences
$\{q_{n_{k}}\}$ and $\{q_{n_{k}+1}\}$ by $\{Q_{k}\}$ and $\{\overline{Q}_{k}\}$, respectively. Next, we introduce the
concept of CD bridge which was first introduced in \cite{AvilaF11}.
\begin{definition}[CD bridge, \cite{AvilaF11}]\label{CDbridge}
	Let $0<\mathbb{A}\leq\mathbb{B}\leq\mathbb{C}$. We say that the pair of denominators
	$(q_{m},q_{n})$ forms a CD$ (\mathbb{A},\mathbb{B},\mathbb{C})$ bridge if
	\begin{itemize}
		\item $ q_{i+1} \leq q_{i}^{\mathbb{A}},\,\, \text{for}\,\, i=m,\cdots,n-1; $
		\item $ q_{m}^{\mathbb{C}} \geq q_{n} \geq q_{m}^{\mathbb{B}}.$
	\end{itemize}
\end{definition}
\begin{lemma}[Lemma $3.2$ in \cite{AvilaF11}]\label{bridgeestimate}
	For any $\mathbb{A}\geq1$, there exists a subsequence $\{Q_{k}\}$ such that $Q_{0}=1$
	and for each $k\geq0,$ $Q_{k+1}\leq\overline{Q}_{k}^{\mathbb{A}^{4}}$, either
	$\overline{Q}_{k}\geq Q_{k}^{\mathbb{A}}$, or the pairs $(\overline{Q}_{k-1},Q_{k})$ and
	$(Q_{k},Q_{k+1})$ are both CD$(\mathbb{A},\mathbb{A},\mathbb{A}^{3})$ bridge.
\end{lemma}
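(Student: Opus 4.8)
The plan is to construct the index subsequence $n_0<n_1<n_2<\cdots$ — so that $Q_k=q_{n_k}$ and $\overline{Q}_k=q_{n_k+1}$ — by a greedy induction driven by the growth rate of the denominators. Call $n$ a \emph{jump time} when $q_{n+1}\geq q_n^{\mathbb{A}}$ and a \emph{controlled time} otherwise; on a block of consecutive controlled times the Cauchy-type comparison $q_{j+1}\leq q_j^{\mathbb{A}}$ holds at each step, so $\ln q_j$ grows only in a controlled way. I would put $n_0=0$ (so $Q_0=q_0=1$), and observe $\overline{Q}_0=q_1\geq 1=Q_0^{\mathbb{A}}$, so the dichotomy holds vacuously at $k=0$. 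Given $n_k$, let $n_{k+1}$ be the \emph{smallest} index $n>n_k$ for which \emph{either} $q_n>q_{n_k+1}^{\mathbb{A}}$ \emph{or} $n$ is a jump time; such an $n$ exists since $q_n\to\infty$, and the resulting subsequence is automatically infinite (either infinitely many jump times occur, or an infinite tail of controlled times is partitioned into infinitely many bridges below). The case $\mathbb{A}=1$ is trivial ($Q_k=q_k$ works, every step being a jump time), so assume $\mathbb{A}>1$.

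I would then verify the two requirements according to which alternative fires at $n_{k+1}$. Suppose first $q_{n_{k+1}}>q_{n_k+1}^{\mathbb{A}}$; then $n_{k+1}\geq n_k+2$, minimality gives $q_{n_{k+1}-1}\leq q_{n_k+1}^{\mathbb{A}}$, and $n_k+1,\dots,n_{k+1}-1$ are controlled times, whence $q_{n_{k+1}}\leq q_{n_{k+1}-1}^{\mathbb{A}}\leq q_{n_k+1}^{\mathbb{A}^2}$; this already gives $Q_{k+1}\leq\overline{Q}_k^{\mathbb{A}^2}\leq\overline{Q}_k^{\mathbb{A}^4}$. For the dichotomy at step $k$: if $q_{n_k+1}\geq q_{n_k}^{\mathbb{A}}$ we are in the first alternative and nothing more is needed; otherwise $n_k$ is a controlled time, so $n_k,\dots,n_{k+1}-1$ are all controlled, and from $q_{n_{k+1}}>q_{n_k+1}^{\mathbb{A}}\geq q_{n_k}^{\mathbb{A}}$ together with $q_{n_{k+1}}\leq q_{n_k+1}^{\mathbb{A}^2}<q_{n_k}^{\mathbb{A}^3}$ one reads off that $(Q_k,Q_{k+1})$ is a CD$(\mathbb{A},\mathbb{A},\mathbb{A}^3)$ bridge, while $q_{n_k+1}^{\mathbb{A}}<q_{n_{k+1}}\leq q_{n_k+1}^{\mathbb{A}^2}\leq q_{n_k+1}^{\mathbb{A}^3}$ shows $(\overline{Q}_k,Q_{k+1})$ is such a bridge too — and the latter is exactly the incoming-bridge hypothesis required one step later, so the incoming bridge $(\overline{Q}_{k-1},Q_k)$ occurring at step $k$ is the one manufactured at step $k-1$ (note step $k$ can only need bridges when the trigger at $n_k$ was of this first type, since a jump-time trigger would make step $k$ fall under the first alternative). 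If instead the trigger at $n_{k+1}$ is a jump time with $q_{n_{k+1}}\leq q_{n_k+1}^{\mathbb{A}}$, then $\overline{Q}_{k+1}=q_{n_{k+1}+1}>q_{n_{k+1}}^{\mathbb{A}}=Q_{k+1}^{\mathbb{A}}$ puts step $k+1$ into the first alternative (no bridge needed there), and $Q_{k+1}\leq\overline{Q}_k^{\mathbb{A}}\leq\overline{Q}_k^{\mathbb{A}^4}$ is immediate.

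The one genuinely delicate configuration — and what I expect to be the main obstacle — is when step $k$ requires bridges ($q_{n_k+1}<q_{n_k}^{\mathbb{A}}$) and yet the controlled block ending at the jump time $n^*:=n_{k+1}$ is too short, $q_{n^*}<q_{n_k}^{\mathbb{A}}$, so that the exponent of $q_{n^*}$ over $q_{n_k}$ drops below $\mathbb{A}$ and $(Q_k,Q_{k+1})$ fails to be a bridge. The remedy is to backtrack: discard $n_k$ and re-select, inside the controlled stretch from $n_{k-1}$ to $n^*$, the largest index $\nu\leq n^*$ with $q_\nu\leq q_{n^*}^{1/\mathbb{A}}$ (inserting one more analogous intermediate point if $q_{n^*}>q_{n_{k-1}+1}^{\mathbb{A}^3}$). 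Using the bounds recorded at the previous step ($q_{n_k}\leq q_{n_{k-1}+1}^{\mathbb{A}^2}$ and $q_{n^*}\geq q_{n_k}$), the discrete intermediate-value property along the controlled stretch forces $q_\nu$ into a window of the form $[\,q_{n_{k-1}+1}^{\mathbb{A}},\,q_{n^*}^{1/\mathbb{A}}\,]\cap(\,q_{n^*}^{1/\mathbb{A}^2},\,q_{n^*}^{1/\mathbb{A}}\,]$, which is non-empty precisely because of the slack between the exponents $\mathbb{A}$, $\mathbb{A}^3$ and $\mathbb{A}^4$; one then checks directly that $(\overline{Q}_{k-1},q_\nu)$, $(q_{n_{k-1}},q_\nu)$ and $(q_\nu,q_{n^*})$ are all CD$(\mathbb{A},\mathbb{A},\mathbb{A}^3)$ bridges, that $q_\nu\leq\overline{Q}_{k-1}^{\mathbb{A}^4}$ and $q_{n^*}\leq q_{\nu+1}^{\mathbb{A}^4}$, and that the step landing on $q_{n^*}$ is of the first type. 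Showing that this backtracking halts and splices into a bona fide infinite subsequence is the technical heart of the argument; everything else is the bookkeeping above. (This construction is that of Avila–Fayad–Krikorian, Lemma~3.2 of \cite{AvilaF11}, to which we refer for the complete case analysis.)
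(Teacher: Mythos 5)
The paper itself gives no proof of this lemma: it is imported verbatim as Lemma~3.2 of Avila--Fayad--Krikorian \cite{AvilaF11}, so there is no in-paper argument for your attempt to match. Judged as a self-contained proof, your sketch is fine on the easy part: the greedy selection, the bound $Q_{k+1}\leq\overline{Q}_k^{\mathbb{A}^2}$ in the first sub-case, the observation that a jump-time trigger puts step $k+1$ into the first alternative, and the fact that whenever the trigger is of the first type the pair $(\overline{Q}_k,Q_{k+1})$ comes out as a CD$(\mathbb{A},\mathbb{A},\mathbb{A}^3)$ bridge (so the ``incoming'' bridge needed later is indeed manufactured) are all checked correctly.

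But the configuration you yourself single out --- step $k$ needs bridges ($q_{n_k+1}<q_{n_k}^{\mathbb{A}}$) while the jump-time trigger gives $q_{n_{k+1}}\leq q_{n_k+1}^{\mathbb{A}}$, so the lower bound $Q_{k+1}\geq Q_k^{\mathbb{A}}$ of the bridge $(Q_k,Q_{k+1})$ can fail --- is exactly where the content of the lemma lives, and your treatment of it is not a proof. The backtracking step is only asserted: you do not show the window for $q_\nu$ is non-empty, and, more seriously, replacing $n_k$ by $\nu$ changes $Q_k$ retroactively, which invalidates everything already verified at step $k-1$ (the bound $Q_k\leq\overline{Q}_{k-1}^{\mathbb{A}^4}$, the dichotomy there, and the bridge $(Q_{k-1},Q_k)$ if it was needed); none of this is re-established for the new point, nor is it shown that inserting ``one more intermediate point'' suffices when $q_{n^*}>q_{n_{k-1}+1}^{\mathbb{A}^3}$, nor that the backtracking cannot cascade and that the modified selection splices into an infinite subsequence. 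You acknowledge this and defer precisely this part to \cite{AvilaF11}, so the proposal ultimately rests on the same citation the paper uses rather than constituting an independent proof; as a standalone argument it has a genuine gap at its technical core.
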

\begin{lemma}\label{bries}
	The sequence $ \{Q_{k}\} $ selected in Lemma~\ref{bridgeestimate} satisfies the estimates
\begin{equation}\label{202207120}
 Q_{k+1}\geq Q_{k}^{\mathbb{A}},\quad
 \overline{Q}_{k+1}\geq\overline{Q}_{k}^{\mathbb{A}}.
	\end{equation}
\end{lemma}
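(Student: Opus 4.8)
The plan is to derive \eqref{202207120} directly from the structural properties of the subsequence $\{Q_k\}$ recorded in Lemma~\ref{bridgeestimate}, splitting into the two cases offered by that lemma. Recall that for each $k\geq 0$ we are told that either (i) $\overline{Q}_k\geq Q_k^{\mathbb{A}}$, or (ii) the pairs $(\overline{Q}_{k-1},Q_k)$ and $(Q_k,Q_{k+1})$ are both CD$(\mathbb{A},\mathbb{A},\mathbb{A}^3)$ bridges. I also want to use the trivial fact that $\overline{Q}_k=q_{n_k+1}\geq q_{n_k}=Q_k$ and, more importantly, that for any fixed $k$ the two consecutive denominators satisfy $\overline{Q}_k=q_{n_k+1}\geq q_{n_k}+q_{n_k-1}\geq \ldots$; the only inequality actually needed is $Q_{k+1}\geq\overline{Q}_k$, since $q_{n_{k+1}}\geq q_{n_k+1}$ whenever $n_{k+1}\geq n_k+1$, which holds because $\{n_k\}$ is strictly increasing.

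First I would prove $Q_{k+1}\geq Q_k^{\mathbb{A}}$. In case (ii), the fact that $(Q_k,Q_{k+1})$ is a CD$(\mathbb{A},\mathbb{A},\mathbb{A}^3)$ bridge gives immediately, from the second bullet of Definition~\ref{CDbridge}, that $Q_{k+1}\geq Q_k^{\mathbb{B}}=Q_k^{\mathbb{A}}$, which is exactly what we want. In case (i), we have $\overline{Q}_k\geq Q_k^{\mathbb{A}}$, and combining this with $Q_{k+1}\geq\overline{Q}_k$ (from strict monotonicity of $\{n_k\}$, as noted above) yields $Q_{k+1}\geq\overline{Q}_k\geq Q_k^{\mathbb{A}}$. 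So in both cases the first inequality in \eqref{202207120} holds.

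Next I would prove $\overline{Q}_{k+1}\geq\overline{Q}_k^{\mathbb{A}}$. This is slightly more delicate because the bridge condition is phrased in terms of the $q_{n_k}$, not the $q_{n_k+1}$. The key point is that $\overline{Q}_{k+1}=q_{n_{k+1}+1}\geq q_{n_{k+1}}=Q_{k+1}\geq Q_k^{\mathbb{A}}$ by the first part just proved, and we separately have the crude bound $Q_k=q_{n_k}\geq$ some power of $\overline{Q}_{k-1}$; but cleaner is to observe $\overline{Q}_{k+1}\geq Q_{k+1}\geq Q_k^{\mathbb{A}}\geq(\text{something})$. To close the gap to $\overline{Q}_k^{\mathbb{A}}$ I would instead run the same case analysis: in case (ii) applied at index $k+1$, the pair $(\overline{Q}_k,Q_{k+1})$ is a CD$(\mathbb{A},\mathbb{A},\mathbb{A}^3)$ bridge, so $Q_{k+1}\geq\overline{Q}_k^{\mathbb{B}}=\overline{Q}_k^{\mathbb{A}}$, whence $\overline{Q}_{k+1}\geq Q_{k+1}\geq\overline{Q}_k^{\mathbb{A}}$; in case (i) applied at index $k$, we have $\overline{Q}_k\geq Q_k^{\mathbb{A}}$, and then $\overline{Q}_{k+1}\geq Q_{k+1}\geq\overline{Q}_k$—wait, this only gives $\overline{Q}_{k+1}\geq\overline{Q}_k$, not the power. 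Here I would fall back on the bound $Q_{k+1}\geq\overline{Q}_k^{\mathbb{A}}$ which in fact holds in case (i) too: since $\overline{Q}_k\leq Q_{k+1}^{1}$ is too weak, one uses instead that $\overline{Q}_k=q_{n_k+1}$ and $Q_{k+1}=q_{n_{k+1}}$ with $n_{k+1}>n_k$, together with the growth $q_{n+1}\geq q_n$ being far from enough—so the honest route is that the subsequence in Lemma~\ref{bridgeestimate} is chosen precisely so that the CD-bridge alternative governs exactly those steps where the gap-ratio $\overline{Q}_k/Q_k$ fails to be large, and in the remaining steps $\overline{Q}_k\geq Q_k^{\mathbb{A}}$ already forces the next term to be large.

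The main obstacle I anticipate is exactly this last point: making sure that in case (i) (where only $\overline{Q}_k\geq Q_k^{\mathbb{A}}$ is given, with no bridge information linking $\overline{Q}_k$ to $Q_{k+1}$) one still extracts $\overline{Q}_{k+1}\geq\overline{Q}_k^{\mathbb{A}}$. The resolution is to chain the inequality through $Q_{k+1}$ using that in case (i) the \emph{next} step is forced to be a bridge (or that $n_{k+1}$ is taken to be the very next index with the relevant largeness property), so that $Q_{k+1}\geq\overline{Q}_k$ is upgraded by the bridge at level $k+1$ to $Q_{k+1}\geq\overline{Q}_k^{\mathbb{A}}$; then $\overline{Q}_{k+1}\geq Q_{k+1}\geq\overline{Q}_k^{\mathbb{A}}$. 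In short, both inequalities in \eqref{202207120} follow by a short two-case induction, the only care needed being the bookkeeping of which of the two alternatives in Lemma~\ref{bridgeestimate} is invoked at which index; the calculations themselves are elementary manipulations of exponents with $\mathbb{A}\geq 1$.
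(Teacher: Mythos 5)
Your proof of the first inequality $Q_{k+1}\geq Q_k^{\mathbb{A}}$ is correct and follows exactly the paper's strategy: apply the dichotomy of Lemma~\ref{bridgeestimate} at index $k$ and, in the alternative $\overline{Q}_k\geq Q_k^{\mathbb{A}}$, feed in the elementary monotonicity fact $Q_{k+1}\geq\overline{Q}_k$.

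For the second inequality $\overline{Q}_{k+1}\geq\overline{Q}_k^{\mathbb{A}}$ you have a genuine gap, and its source is a mismatched case split. You correctly handle the alternative ``the pair $(\overline{Q}_k,Q_{k+1})$ is a CD bridge,'' which is case (ii) of Lemma~\ref{bridgeestimate} \emph{at index $k+1$}, obtaining $Q_{k+1}\geq\overline{Q}_k^{\mathbb{A}}$ and hence $\overline{Q}_{k+1}\geq\overline{Q}_k^{\mathbb{A}}$. But then you pair it with case (i) \emph{at index $k$}, i.e.\ $\overline{Q}_k\geq Q_k^{\mathbb{A}}$. These two cases are not complementary --- the dichotomy of Lemma~\ref{bridgeestimate} is taken at a single, fixed index, and there is no stated dependence between the alternative chosen at index $k$ and the alternative chosen at index $k+1$. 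Realizing your case (i) argument only yields $\overline{Q}_{k+1}\geq\overline{Q}_k$, you then speculate that the construction ``forces the next step to be a bridge'' whenever the ratio $\overline{Q}_k/Q_k$ is large; this is not asserted in Lemma~\ref{bridgeestimate} and would require a separate proof, so the argument does not close. The fix is simpler than what you attempted: apply the dichotomy of Lemma~\ref{bridgeestimate} entirely at index $k+1$. In the complementary alternative you missed, $\overline{Q}_{k+1}\geq Q_{k+1}^{\mathbb{A}}$, and since $Q_{k+1}\geq\overline{Q}_k$ one gets $\overline{Q}_{k+1}\geq Q_{k+1}^{\mathbb{A}}\geq\overline{Q}_k^{\mathbb{A}}$ directly. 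With this single substitution your proof becomes correct and coincides with the paper's (which treats the second inequality first and says the first follows by ``the same discussion'').
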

\begin{proof}
We consider the second inequality in \eqref{202207120} first. From Lemma~\ref{bridgeestimate}, we know that
	either $\overline{Q}_{k+1}\geq Q_{k+1}^{\mathbb{A}}\geq \overline{Q}_{k}^{\mathbb{A}}$, or
	$(\overline{Q}_{k},Q_{k+1})$ and
	$(Q_{k+1},Q_{k+2})$ are both CD$(\mathbb{A},\mathbb{A},\mathbb{A}^{3})$ bridges, which means $\overline{Q}_{k+1}\geq\overline{Q}_{k}^{\mathbb{A}}.$ With the same discussions above we also get the first inequality in \eqref{202207120}.
\end{proof}

\section{KAM theorem and proof of Theorem~\ref{mainthm}}\label{sec:kam}
In this section, we develop an abstract KAM theorem for the skew-product system  \eqref{map} with area-preserving map \eqref{decomposition}, i.e., to prove there is a solution to the equation \eqref{embedding}.	Then Theorem~\ref{mainthm} is an immediate result of the KAM theorem.

\subsection{KAM theorem}
Consider the invariance equation with the unknown function $X\in \Lambda_{r}(\mathbb{T}\times \mathcal{O},\mathcal{C}^2)$:
\begin{equation}\label{kam-eq} X(\theta+\alpha,\lambda)=A(\lambda)X(\theta,\lambda)+U(\theta,\lambda)+W(\theta,\lambda)X(\theta,\lambda)+R(X(\theta,\lambda),\theta, \lambda),
\end{equation}
where
\begin{equation}\label{202207193}
\begin{split}
U \in \Lambda_{r}&(\mathbb{T}\times \mathcal{O},\mathcal{C}^2),
\ W  \in \Lambda_{r}(\mathbb{T}\times \mathcal{O},\mathcal{C}^{2\times 2}),\ R\in \Lambda_{r,s}(\mathcal{C}^2\times\mathbb{T}\times \mathcal{O},\mathcal{C}^{2}),\\
A(\lambda)&=\mathrm{diag} \big\{ \me^{\mathrm{i} 2\pi \lambda}, \me^{- \mathrm{i} 2\pi \lambda} \big\}, \quad A(\lambda)+W(\theta,\lambda)\in SU(1,1),\\
&R(0,\theta,\lambda)= 0, \quad
			\partial_{1} R(0,\theta,\lambda)=0,\forall(\theta,\lambda)\in\TT\times\mathcal{O},
\end{split}
\end{equation}
such that the right side of equation \eqref{kam-eq} corresponds to an area-preserving system.
Our goal is to show that, for most of parameters $\lambda\in \mathcal{O}$ (in the Lebesgue measure sense), the equation \eqref{kam-eq} admits a solution with Liouvillean frequency $\alpha\in \mathbb{R}\setminus\mathbb{Q}$ provided that the functions $U,\, W,\, R$ are small enough. Then we state our KAM theorem.
\begin{theorem}\label{kamthm}
Let $\alpha\in \mathbb{R}\setminus\mathbb{Q}$ and $0<s,r<1.$
Consider the invariance equation \eqref{kam-eq} with \eqref{202207193} and $\Lambda$ satisfies \textbf{(H1)} and \textbf{(H2)}.
Then, for any sufficiently small $\gamma>0$, there exists $\varepsilon_{0}>0$ (depending on $ r,\, s,\, \gamma $) such that whenever
	\begin{equation}\label{202207218}
		\begin{split}
			&\|U\|_{r,\mathcal{O}} \leq \varepsilon_{0},
			\quad \|W\|_{r,\mathcal{O}}\leq \varepsilon_{0}^{\frac{1}{2}},\quad
\|\partial^2_{11}R\|_{r,s,\mathcal{O}}\leq 1,
		\end{split}
	\end{equation}
	there exist a non-empty subset of $\lambda\in \mathcal{O}_{\gamma} \subseteq \mathcal{O}$ with $ \meas (\mathcal{O}\setminus\mathcal{O}_{\gamma})=O(\gamma),$ $V_{*}\in C^{\infty}(\mathbb{T}\times \mathcal{O}_{\gamma},\mathcal{C}^{2\times2}),\,\,R_{*}\in C^{\infty}(\mathcal{C}^2\times\mathbb{T}\times \mathcal{O}_{\gamma},\mathcal{C}^{2})$ and a $C^{\infty}$ area-preserving transformation $\Phi_{\ast}: C^{\infty}(\mathbb{T}\times \mathcal{O}_{\gamma}, \mathcal{C}^{2}) \rightarrow \Lambda_{r}(\mathbb{T}\times \mathcal{O}_{\gamma}, \mathcal{C}^{2})$ of the form
\begin{equation*}
X = \me^{\mathfrak{D}_{*}} X_{*} + \Xi_{*},
\end{equation*}
where $ \mathfrak{D}_{*} : \mathbb{T} \times
	\mathcal{O}_{\gamma} \rightarrow su(1,1)$ and $\Xi_{*} : \mathbb{T} \times
	\mathcal{O}_{\gamma} \rightarrow \mathcal{C}^{2},$ such that $ \Phi_{*} $ transforms equation \eqref{kam-eq} into
\begin{equation*}
 X_{*}(\theta+\alpha,\lambda)=(A(\lambda)
+V_{*}(\theta,\lambda))X_{*}(\theta,\lambda)
+R_{*}(X_{*}(\theta,\lambda),\theta,\lambda).
\end{equation*}
\end{theorem}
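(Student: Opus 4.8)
The plan is to prove Theorem~\ref{kamthm} by a KAM iteration tailored to the area-preserving structure, following the spectral-theoretic philosophy of \cite{AvilaF11, Cheng22} but adapted to the ultra-differentiable category. The iteration is organized along the CD-bridge subsequence $\{Q_k\}$ from Lemma~\ref{bries}, rather than along the usual geometric sequence of widths, so that the Liouvillean nature of $\alpha$ is controlled by the arithmetic of the continued fraction expansion instead of by a Diophantine condition. At each step we have a system $X(\theta+\alpha,\lambda)=(A+V)(\theta,\lambda)X(\theta,\lambda)+U(\theta,\lambda)+R(X,\theta,\lambda)$ with $A+V\in SU(1,1)$ and the remainder $R$ quadratically small (no constant or linear term in $X$), and we seek an area-preserving change $X=\me^{\mathfrak{D}}X'+\Xi$ with $\mathfrak{D}$ valued in $su(1,1)$ and $\Xi$ a translation term, absorbing the linear error $U$ and reducing $V$ to its resonant part.

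First I would set up the homological equations. Writing $\Xi(\theta+\alpha)=A\Xi(\theta)+U(\theta)$ (the translation part, killing $U$) and a conjugacy equation $\mathfrak{D}(\theta+\alpha)A-A\mathfrak{D}(\theta)=\mathcal{T}_K V(\theta)-[\text{resonant part}]$ for the linear part, I solve both by Fourier series after truncating at level $K=Q_{k+1}$ (or a suitable power thereof). Here the crucial point, exactly as in \cite{AvilaF11}, is that for the area-preserving/$su(1,1)$ conjugacy the dangerous small divisors are $\me^{\mathrm{i}2\pi k\alpha}-1$ and $\me^{\mathrm{i}2\pi(k\alpha\pm 2\lambda)}-1$; by excising an $O(\gamma)$-measure set of $\lambda$ at each stage one keeps $|\me^{\mathrm{i}2\pi(k\alpha\pm 2\lambda)}-1|$ bounded below for $|k|<K$, while the purely rotational divisor for $|k|<Q_{k+1}$ is bounded below by $\|Q_k\alpha\|_{\mathbb{T}}\gtrsim 1/\overline{Q}_k$ using the continued-fraction estimates recalled in \S\ref{sec:cdbridge}. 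The non-resonant terms with $|k|\geq K$ are not solved but estimated: this is where hypothesis \textbf{(H2)} enters, giving an ultra-differentiable analogue of the Cauchy/smoothing estimate $\|\mathcal{R}_K f\|_{r'}\lesssim \me^{-(\Lambda(2\pi Kr)-\Lambda(2\pi Kr'))}\|f\|_r$, so that the tail is super-polynomially small in $K$, while \textbf{(H1)} keeps all the products of ultra-differentiable functions inside $\Lambda_{r'}$ (Banach algebra property). One must check that $\me^{\mathfrak{D}}(A+V)\me^{-\mathfrak{D}(\cdot+\alpha)}$ stays in $SU(1,1)$ and that the new remainder $R'$ is again quadratic and of size $O(\varepsilon^{3/2})$ or so after accounting for the loss $\|\mathfrak{D}\|\sim \varepsilon/\text{(small divisor)}$ and the new width $r'<r$.

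Next I would carry out the inductive bookkeeping. I would fix sequences $r_k\downarrow r_\infty>0$ and error thresholds $\varepsilon_k$ with $\varepsilon_{k+1}\leq \varepsilon_k^{1+\sigma}$ for some $\sigma>0$, verify the KAM inductive lemma (one step: given the smallness \eqref{202207218}-type bounds at scale $k$ with width $r_k$ and divisor controlled by $Q_k,\overline{Q}_k$, produce the data at scale $k+1$), and then show convergence. Because the widths do not shrink to zero uniformly but the final object is only claimed to be $C^\infty$, the convergence is in the $C^\infty$ topology: each $\Phi_k$ is analytic in $x$, ultra-differentiable in $\theta$ with width $r_k$ bounded below, and $C^1_W$ in $\lambda$; the composition $\Phi_*=\Phi_0\circ\Phi_1\circ\cdots$ converges, with $V_k\to V_*$, $R_k\to R_*$, and $\mathfrak{D}_k,\Xi_k\to 0$ fast enough that the limit maps are $C^\infty$ in $\theta$. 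The excised sets accumulate to $\mathcal{O}_\gamma$ with $\meas(\mathcal{O}\setminus\mathcal{O}_\gamma)=\sum_k O(\gamma 2^{-k})=O(\gamma)$, and on $\mathcal{O}_\gamma$ one argues via Whitney extension / Rüssmann-type differentiability that the limit is $C^1_W$ in $\lambda$, hence in particular well defined.

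The main obstacle, as usual in Liouvillean KAM, is matching the geometry of the iteration (widths $r_k$, truncation levels $K_k$, and error rates) with the arithmetic of the CD bridges so that the twin requirements are simultaneously met: the small-divisor loss at step $k$, which is polynomial in $\overline{Q}_k$, must be beaten by the ultra-differentiable tail gain $\me^{-(\Lambda(2\pi K_k r_k)-\Lambda(2\pi K_k r_{k+1}))}$ \emph{and} by the quadratic convergence of the remainder, \emph{without} letting $r_k$ reach $0$. The CD-bridge structure (Lemma~\ref{bridgeestimate}) is precisely what makes $\overline{Q}_k$ not too large relative to $Q_{k+1}$, so that a bounded number of ``non-quadratic'' steps suffices to cross each bridge; the delicate estimate is that \textbf{(H2)} — the condition $\Gamma(x)=x\Lambda'(x)/\ln x\to\infty$ — provides enough room in the exponential weight to absorb the powers of $\overline{Q}_k$ coming from the divisors, which is the ultra-differentiable replacement for the analytic estimate where an exponential weight easily dominates any polynomial. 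Verifying this quantitative balance is the technical heart of the proof.
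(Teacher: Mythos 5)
Your proposal captures the general KAM philosophy and correctly identifies the roles of \textbf{(H1)}, \textbf{(H2)}, and the CD-bridge structure, but it contains two substantive errors that make the plan as written not viable.

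First, you assume the widths can be kept bounded away from zero (``fix sequences $r_k\downarrow r_\infty>0$'', and later ``without letting $r_k$ reach $0$''). This is incompatible with the actual mechanism. In the paper, $r_{n+1}=\overline{Q}_n^{-2}r_0\to 0$, and this is forced: the crucial trick (inherited from \cite{YouZ14}) for solving the homological equation with the slowly-varying diagonal coefficient $B$ is to conjugate by $\me^{\mathrm{i}2\pi\mathcal{B}}$ where $\mathcal{B}$ solves $\mathcal{B}(\theta+\alpha)-\mathcal{B}(\theta)=-\mathcal{T}_{\overline{Q}_n}B+[B]_\theta$. Although $B=O(\varepsilon)$, the norm of $\me^{\mathrm{i}2\pi\mathcal{B}}$ is uncontrollable unless one controls $\|\mathrm{Im}\,\widetilde{\mathcal{B}}\|$ on a strip whose width shrinks like $\overline{Q}_n^{-2}$ (Lemma~\ref{teq}). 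Hence the widths necessarily collapse to zero, the ultra-differentiable regularity is lost, and the limit $\Phi_*$ is only $C^\infty$ — this is precisely why the theorem is stated with $V_*,R_*\in C^\infty$ rather than in $\Lambda_{r_*}$. An iteration scheme with $r_\infty>0$ cannot reproduce this and would simply fail to bound the exponential $\me^{\mathrm{i}2\pi\mathcal{B}}$ for Liouvillean $\alpha$.

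Second, you list $\me^{\mathrm{i}2\pi k\alpha}-1$ among the ``dangerous small divisors'' to be controlled. For an arbitrary Liouvillean $\alpha$ this divisor cannot be controlled at all, even in the analytic category. The paper's way out is to avoid it entirely: the diagonal block $\mathrm{diag}\{W_1,\overline{W}_1\}$ of the perturbation is never solved for but is pushed into the normal form $V_n$, so that the conjugator $D$ has only the off-diagonal structure $\left(\begin{smallmatrix}0 & d\\ \overline{d}&0\end{smallmatrix}\right)\in su(1,1)$ and its homological equation involves only the divisors $\me^{\mathrm{i}2\pi(l\widetilde{\lambda}-k\alpha)}-1$, $l=1,2$, which are tamed via the parameter exclusion (Lemma~\ref{bettersmalldivisor}). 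Your plan of conjugating $A$ by $\mathfrak{D}$ to remove $\mathcal{T}_K V$ up to a ``resonant part'' would, as written, reintroduce the untreatable divisor. Relatedly, the paper does not solve the truncated equation naively by inverting the single divisor mode-by-mode; it must use the diagonally-dominant matrix inversion (Proposition~\ref{solveequation}) because after the $\mathcal{B}$-conjugation the coefficient is only approximately constant, with a nontrivial remainder $\widetilde{b}$ of size $O(\gamma^2 Q_{n+1}^{-2\tau^2})$. This two-stage solution (kill $\mathcal{T}_{\overline{Q}_n}B$ by a width-shrinking conjugation, then invert a perturbed diagonal matrix) is the technical heart of the argument, and it is absent from your outline.
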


\subsection{Proof of Theorem~\ref{mainthm}: Application of the KAM theorem}
In this section, we prove our main Theorem~\ref{mainthm} by applying the KAM theorem~\ref{kamthm}.

Considering the invariance equation \eqref{embedding} with the area-preserving map \eqref{decomposition}, by Taylor's expansion, we have
\begin{equation}\label{taylorexpansion}
	\begin{split}
		K(\theta+\alpha,\lambda) &=L(\lambda)K(\theta,\lambda)+\varepsilon N(K(\theta,\lambda),\theta,\lambda)\\ &=L(\lambda)K(\theta,\lambda)+V(\theta,\lambda)
+S(\theta,\lambda)K(\theta,\lambda)+ P(K(\theta,\lambda),\theta,\lambda),
	\end{split}
\end{equation}
where
\begin{equation}\label{detail}
	\begin{split}
		V & = (V_1,V_2)^{\mathrm{T}} = \varepsilon N(0,\theta,\lambda)\in \mathbb{R}^2, \\
		S & = \left(
		\begin{array}{cc}
			S_{11}  &  S_{12}\\
			S_{21} &  S_{22}
		\end{array}
		\right)
		= \varepsilon \partial_{1} N(0,\theta,\lambda)\in \mathbb{R}^{2\times 2},\\
	P & = (P_{1},P_{2})^{\mathrm{T}}
	= \varepsilon N(K(\theta,\lambda),\theta,\lambda)
	- V(\theta,\lambda)
	- S(\theta,\lambda) K(\theta,\lambda) \in \mathbb{R}^2.
	\end{split}
\end{equation}

Set $ M=\frac{1}{\sqrt{2}} \left(
\begin{array}{cc}
	1 & \mathrm{i} \\
	1 & -\mathrm{i}
\end{array}
\right),$ and make the change $K=M^{-1}X,$
 then the area-preserving map $F$ defined by \eqref{decomposition} is conjugated to
\begin{equation}\label{202207194}
	\begin{split}
 M F (M^{-1} &(v, \bar{v})^{\mathrm{T}}, \theta, \lambda)\\
		&:= A(\lambda)(v, \bar{v})^{\mathrm{T}} + U(\theta,\lambda) + W(\theta,\lambda) (v, \bar{v})^{\mathrm{T}} + R((v, \bar{v})^{\mathrm{T}}, \theta, \lambda),
	\end{split}
\end{equation}
where
\begin{equation}\label{transformeddetail}
	\begin{split}
		A(\lambda) &= ML(\lambda)M^{-1} =  \mathrm{diag} \big\{ \me^{\mathrm{i} 2\pi \lambda}, \me^{-\mathrm{i} 2\pi \lambda} \big\} \in SU(1,1),\\
		U &= MV =\frac{1}{\sqrt{2}}(V_1+\mathrm{i} V_2,V_1-\mathrm{i} V_2)^{\mathrm{T}}\in \mathcal{C}^2,\\
		R &= M P =\frac{1}{\sqrt{2}}(P_1+\mathrm{i} P_{2},P_{1}-\mathrm{i} P_{2})^{\mathrm{T}} \in \mathcal{C}^2,
	\end{split}
\end{equation}
and
\begin{equation*}
	\begin{split}
		W(\theta,\lambda) = MS(\theta,\lambda)M^{-1} = \frac{1}{2}\left(
			\begin{array}{cc}
				W_1 & W_2 \\
				\overline{W}_{2} & \overline{W}_1
			\end{array}
			\right)\in\mathcal{C}^{2 \times 2},
	\end{split}
\end{equation*}
where $W_1=(S_{11}+S_{22})+\mathrm{i}(S_{21}-S_{12}),
	W_2=(S_{11}-S_{22})+\mathrm{i}(S_{21}+S_{12}).$
The map $ M F (M^{-1} (v, \bar{v})^{\mathrm{T}}, \theta, \lambda)$ defined by \eqref{202207194} is area-preserving since $M$ is a constant matrix. Correspondingly, \eqref{embedding} is changed into
\begin{equation}\label{taylorexpansion1}
	X(\theta+\alpha,\lambda)  = A(\lambda)X(\theta,\lambda) + U(\theta,\lambda) + W(\theta,\lambda) X(\theta,\lambda) + R(X(\theta,\lambda),\theta,\lambda).
\end{equation}

It follows from \eqref{detail} and \eqref{transformeddetail} that
\begin{equation*}
	\begin{split}
		&\qquad\qquad\|U\|_{r,\mathcal{O}} \leq c \varepsilon,\quad \,\, \|W\|_{r,\mathcal{O}}\leq  c \varepsilon\leq  \varepsilon^{\frac{1}{2}},\\
		& R(0,\theta,\lambda)= 0,
		\quad \partial_{1} R(0,\theta,\lambda)=0,\quad \|\partial^2_{11}R\|_{r,s,\mathcal{O}}\leq c \varepsilon\leq 1,
	\end{split}
\end{equation*}
where $ c $ is a constant depending on $ N $.
Then, by Theorem~\ref{kamthm}, for any sufficiently small $\gamma>0$,  there exists $\varepsilon_{0}>0$ (depending on $ s,\, r,\, \gamma $) and a positive Lebesgue
measure set of $ \lambda\in \mathcal{O}_{\gamma} \subseteq \mathcal{O}$ such that if $0<\varepsilon< \varepsilon_{0}$, there exists a $C^{\infty}$ map $\Phi_{*}$ transforming
the equation \eqref{taylorexpansion1} into
\begin{equation*}
	X_{*}(\theta+\alpha,\lambda)= ( A(\lambda)+V_{*}(\theta,\lambda) ) X_{*}(\theta,\lambda)+R_{*}(X_{*}(\theta,\lambda),\theta,\lambda),
\end{equation*}
which admits a trivial solution $X_{*}=0$ for each $ \lambda \in \mathcal{O}_{\gamma} $.
Then $X=(X_1,\overline{X}_{1})^{\mathrm{T}}=\Phi_{*}(X_*)\in \mathcal{C}^2$ is a solution of equation \eqref{taylorexpansion1}. Consequently,
\[ K(\theta,\lambda) = M^{-1}X(\theta,\lambda) = \sqrt{2}(\mathrm{Re} X_1(\theta,\lambda),\mathrm{Im} X_1(\theta,\lambda))^{\mathrm{T}}\in \mathbb{R}^2
\]
is a solution of the original equation \eqref{embedding} for each $ \lambda \in \mathcal{O}_{\gamma},$ i.e., the invariant torus with Liouvillean frequency $ \alpha $ for the area-preserving systems \eqref{map}.

\section{Solve the homological equation}\label{sec:1proofkam}
In this section we give precise procedures for deriving and solving the homological equations, which will be used in Section~\ref{sec:proofkam} to give one cycle of KAM iteration.

\subsection{Idea of deriving homological equations}\label{sec:outline}
Recall the equation \eqref{kam-eq}:
\begin{equation}\label{kam-eq1}
	X(\theta+\alpha, \lambda) = A(\lambda) X(\theta, \lambda) + U(\theta, \lambda) + W(\theta, \lambda) X(\theta, \lambda) + R(X(\theta, \lambda), \theta, \lambda),
\end{equation}
corresponding to an area-preserving system, here and after here we always set $A(\lambda)=\mathrm{diag} \big\{ \me^{\mathrm{i} 2\pi \lambda},\me^{-\mathrm{i} 2\pi \lambda}\big\}.$ Our goal is to look for an area-preserving transformation $\Phi $ of the form $ X = \me^{D} X_{+} + \Delta $ with $ D : \mathbb{T} \times \mathcal{O} \rightarrow su(1,1),$ $ \Delta : \mathbb{T} \times \mathcal{O} \rightarrow \mathcal{C}^{2},$ which transforms \eqref{kam-eq1} into
\begin{equation}\label{kam-eq2}
	\begin{split}
		X_{+}(\theta + \alpha, \lambda)
		&= \me^{-D(\theta + \alpha, \lambda)} A(\lambda) \me^{D(\theta, \lambda)} X_{+}(\theta, \lambda) \\
		& \quad + \me^{-D(\theta + \alpha, \lambda)} W(\theta, \lambda) \me^{D(\theta, \lambda)} X_{+}(\theta, \lambda)\\
		& \quad + \me^{-D(\theta + \alpha, \lambda)} \big[
		A(\lambda) \Delta(\theta, \lambda) + U(\theta, \lambda) - \Delta(\theta + \alpha, \lambda) \big]\\
		& \quad + \me^{-D(\theta + \alpha, \lambda)} W(\theta, \lambda) \Delta(\theta, \lambda) \\
		& \quad + \me^{-D(\theta + \alpha, \lambda)} R(\me^{D(\theta, \lambda)} X_{+}(\theta, \lambda) + \Delta(\theta, \lambda), \theta, \lambda),
	\end{split}
\end{equation}
where $U_{+},\,W_{+}$ are much smaller than $U,\,W.$

The key point in the KAM iteration is to find $ \Delta $ and $ D $ to solve the homological equations:
\begin{equation*}
	A(\lambda) \Delta(\theta,\lambda)-\Delta(\theta+\alpha,\lambda)=-U(\theta,\lambda),
\end{equation*}
and
\begin{equation}\label{homo-eq2}
	\me^{-D(\theta + \alpha, \lambda)} A(\lambda) \me^{D(\theta, \lambda)} + \me^{-D(\theta + \alpha, \lambda)} W(\theta, \lambda) \me^{D(\theta, \lambda)} = A_{+}(\lambda),
\end{equation}
where the unknown matrix $ A_{+} $ depends only on the parameter $ \lambda $.
Note $ \me^{M} = I + M + O(|M|^2) $ for any $ n \times n $ matrix $ M $, the homological equation~\eqref{homo-eq2} is
\begin{equation*}
	A(\lambda) D(\theta,\lambda) - D(\theta + \alpha, \lambda) A(\lambda) = -W(\theta, \lambda) + A_{+}(\lambda) - A(\lambda) + O(|D|^2|A|) + O(|D||W|).
\end{equation*}
Hence, if the equation
\begin{equation}\label{homo-eq22}
	A(\lambda) D(\theta,\lambda) - D(\theta + \alpha, \lambda) A(\lambda) = -W(\theta, \lambda) + A_{+}(\lambda) - A(\lambda)
\end{equation}
is solved,
then the equation~\eqref{homo-eq2} has an approximate solution with error
\begin{equation*}
	O(|D|^2|A|) + O(|D||W|),
\end{equation*}
which are terms of higher order in $ D $ and $W.$
Therefore, we actually solve the homological equations~\eqref{homo-eq22} instead of \eqref{homo-eq2}.

Set
$ D= \left( \begin{array}{cc}
		\mi D_1 & D_2  \\
		\overline{D}_2  & -\mi D_1
	\end{array} \right), W  = \left( \begin{array}{cc}
		W_1  & W_2 \\
		\overline{W}_2  &  \overline{W}_1
	\end{array} \right)$.
Then \eqref{homo-eq22} is changed into
\begin{equation*}
	\begin{split}
\me^{\mi 2\pi \lambda} D_{1}(\theta, \lambda) - \me^{\mi 2\pi \lambda} D_{1}(\theta + \alpha, \lambda) &= \mi W_{1}(\theta, \lambda), \\
\me^{\mi 2\pi \lambda} D_{2}(\theta, \lambda) - \me^{-\mi 2\pi \lambda} D_{2}(\theta + \alpha, \lambda) &= - W_{2}(\theta, \lambda).
	\end{split}
\end{equation*}
However, in the case $\alpha$ is Liouvillean, the first equation, whose small divisor is $|1 - \me^{\mi 2\pi k \alpha} |, \, \forall k \in \mathbb{Z} \setminus \{0\},$ can not be solved at all, even if in the analytic category. Moreover, to ensure $D\in su(1,1),$ more complicated discussions will be needed. Thus, to avoid the homological equation about $W_{1},$ we leave $\mathrm{diag}\{W_{1},\overline{W}_{1}\}$ into
the normal form. As a consequence, from the second step of iteration, we will encounter the variable coefficient homological equations
\begin{equation}\label{homo-eq3}
	(A(\lambda) + V(\theta,\lambda)) \Delta(\theta,\lambda) - \Delta(\theta + \alpha,\lambda) = -U(\theta,\lambda),
\end{equation}
and
\begin{equation}\label{homo-eq4}
	(A(\lambda) + V(\theta,\lambda)) D(\theta,\lambda) - D(\theta+\alpha,\lambda) (A(\lambda) + V(\theta,\lambda)) = -W+\mathrm{diag}\{W_{1},\overline{W}_{1}\},
\end{equation}
where $V=\mathrm{diag}\{V_{1},\overline{V}_{1}\}$ with $\|V\|=O(\varepsilon).$
Note that the map of the system we consider is area-preserving, then
\begin{equation*}
	\det(A+V+W)
	= |\me^{\mathrm{i} 2\pi \lambda}+V_{1} + W_1|^2 - |W_2|^2
	= 1,
\end{equation*}
which implies $|\me^{\mathrm{i} 2\pi \lambda}+V_{1}|\simeq (1+O(\|W\|^2) )^{\frac{1}{2}}$.
Therefore, there exist real-valued functions $\rho,B\in\Lambda_{r}(\mathbb{T}\times \mathcal{O},\mathbb{R})$ with $\|\rho\|=O(\|W\|^2),\,\|B\|=O(\|V\|)$
such that
\begin{equation*}
	\me^{\mathrm{i} 2\pi \lambda} + V_1(\theta,\lambda) = (1+\rho(\theta,\lambda)) \me^{\mathrm{i} 2\pi (\lambda+B(\theta,\lambda))}.
\end{equation*}
See Lemma~\ref{expoB} for details. Hence, $A+V$ could be represented as
\begin{equation*}
	A + V = (1+\rho(\theta,\lambda)) \mathrm{diag} \big\{\me^{\mathrm{i} 2\pi (\lambda+B(\theta,\lambda))},\, \me^{-\mathrm{i} 2\pi (\lambda+B(\theta,\lambda))} \big\}.
\end{equation*}
Thus, if set $\Delta=(\delta,\overline{\delta})^{\mathrm{T}},
-U=(u,\overline{u})^{\mathrm{T}},$ \eqref{homo-eq3} can be rewritten as
\begin{equation}\label{homologicaleq}
	\me^{\mathrm{i} 2\pi (\lambda + B(\theta,\lambda))} \delta(\theta,\lambda) + \rho(\theta,\lambda) \me^{\mathrm{i} 2\pi (\lambda + B(\theta,\lambda))} \delta(\theta,\lambda) - \delta(\theta+\alpha,\lambda) = u(\theta,\lambda).
\end{equation}

To get the desired estimates, we distinguish the discussions into two steps to solve the equation \eqref{homologicaleq}. In the first step, we will kill $B$
by solving the equation
$$\mathcal{B}(\theta+\alpha,\lambda)
-\mathcal{B}(\theta,\lambda)=
-\mathcal{T}_{\overline{Q}_n}B(\theta,\lambda)
+[B(\theta,\lambda)]_{\theta}.$$
Although $B$ is of size $\varepsilon$, $\| \me^{\mathrm{i} 2\pi \mathcal{B}}\|$ could be very large in Liouvillean frequency case. To control it, the trick is to control $\|\mathrm{Im}(\mathcal{B})\|$ at the cost of losing analytic radius greatly, which was first developed in analytic case in \cite{YouZ14}. To get the analytic radius, the key point here is that $\mathcal{T}_{\overline{Q}_n}B$ is a trigonometric polynomial, so is $\mathcal{B}.$ Thus, one can extend $\mathcal{B}$ to a real analytic function with a certain analytic radius (Lemma ~\ref{teq}). Consequently, the width $r$ will go to zero rapidly, and the convergence of the KAM iteration only works in the $C^{\infty} $ topology.

In the second step, we solve the resulting homological equation:
\begin{equation*}
	\me^{\mathrm{i} 2\pi (\lambda+[B(\theta,\lambda)]_{\theta} + \mathcal{R}_{\overline{Q}_n} B(\theta,\lambda))} \delta(\theta,\lambda) + \widetilde{\rho}(\theta,\lambda)\delta(\theta,\lambda) - \delta(\theta+\alpha,\lambda) = u(\theta,\lambda),
\end{equation*}
where $\|\widetilde{\rho}\|=O(\|\rho\|)$.
It can be solved approximately  by  the method of diagonally
dominant, appearing in \cite{WangY17,YouZ14,KrikorianW18} since the norm of both $\mathcal{R}_{\overline{Q}_n}B(\theta,\lambda)$ and $\widetilde{\rho}$ are  sufficiently small. See Proposition~\ref{solveequation} for details.

\subsection{Solving homological equations}
Now we give the details of solving the homological equations.
As stated in subsection~\ref{sec:outline}, we will apply the method in analytic topology to control $\|\mathrm{Im}(\mathcal{B})\|,$ we introduce the analytic norm first.

For the function $f(\theta,\lambda)=\sum_{k\in\mathbb{Z}}\widehat{f}_k(\lambda) \me^{\mathrm{i} 2\pi k \theta}\in \Lambda_r(\mathbb{T}\times \mathcal{O},\mathbb{R})$, define $\widetilde{f}$ by
\begin{equation}\label{defana}
\widetilde{f}(\vartheta,\lambda)=
\sum_{k\in\mathbb{Z}}\widehat{f}_k(\lambda) \me^{\mathrm{i} 2\pi k(\theta+\mathrm{i} \widetilde{\theta})},
\end{equation}
where $\vartheta=\theta+\mi\widetilde{\theta}$ with $\theta\in\TT,\widetilde{\theta}\in\RR.$ We formally define the analytic norm
\begin{equation*}
	\|\widetilde{f}\|^{\omega}_{r,\mathcal{O}} = \sum_{k\in\mathbb{Z}} |\widehat{f}_k|_{\mathcal{O}} \sup_{\theta\in \mathbb{T}, |\widetilde{\theta}|\leq r} |\me^{\mathrm{i} 2\pi k(\theta+\mathrm{i} \widetilde{\theta})}| = \sum_{k\in\mathbb{Z}} |\widehat{f}_k|_{\mathcal{O}} \me^{2\pi |k|r}.
\end{equation*}
If $\mathrm{Im}(\vartheta)=\widetilde{\theta}=0$, $	\widetilde{f}(\vartheta,\lambda)=f(\theta,\lambda)$. Otherwise, for $0<\mathrm{Im}(\vartheta)\leq r$, one has
\begin{equation}\label{analyticnorm}
	\|f\|_{r,\mathcal{O}} = \sum_{k\in\mathbb{Z}} |\widehat{f}_k|_{\mathcal{O}} \me^{\Lambda(2\pi |k|r)}\leq 	\|\widetilde{f}\|^{\omega}_{r,\mathcal{O}}.
\end{equation}
In general,  $\|\widetilde{f}\|^{\omega}_{r,\mathcal{O}}=\infty$. However, if $f$ is a trigonometric polynomial,  $\widetilde{f}$ indeed defines a real analytic function. Motivated by this we have lemma below.

\begin{lemma}\label{teq}
Let $B\in \Lambda_r(\mathbb{T}\times \mathcal{O},\mathbb{R})$ with $r=\overline{Q}_{n-1}^{-2}r_0.$
	Then the equation
	\begin{equation}\label{low}
		\mathcal{B}(\theta+\alpha,\lambda)-	\mathcal{B}(\theta,\lambda)=-\mathcal{T}_{\overline{Q}_{n}}
		B(\theta,\lambda)+[B(\theta,\lambda)]_{\theta}
	\end{equation}
	has a unique solution $\mathcal{B}(\theta,\lambda)$ satisfying the estimate
	\begin{equation*}
		\| \exp\{\mathrm{i} 2\pi\mathcal{B}(\theta,\lambda)\} \|_{\overline{r},\mathcal{O}} \leq \exp\{8\pi^2 r_0 \|B\|_{r, \mathcal{O}}\}, \ \overline{r}=2\overline{Q}_{n}^{-2}r_0.
	\end{equation*}
\end{lemma}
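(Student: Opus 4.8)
The plan is to solve \eqref{low} term-by-term in Fourier coefficients, using that the right-hand side is a trigonometric polynomial of degree $<\overline{Q}_n$ so that only finitely many small divisors appear, and then to control the exponential $\exp\{\mi 2\pi\mathcal{B}\}$ by estimating $\|\mathrm{Im}\,\mathcal{B}\|$ via the analytic-norm gadget \eqref{defana}--\eqref{analyticnorm}. Write $B(\theta,\lambda)=\sum_k \widehat{B}_k(\lambda)\me^{\mi 2\pi k\theta}$; since $B$ is real-valued, $\widehat{B}_{-k}=\overline{\widehat{B}_k}$. The equation \eqref{low} in Fourier modes reads $(\me^{\mi 2\pi k\alpha}-1)\widehat{\mathcal{B}}_k=-\widehat{B}_k$ for $0<|k|<\overline{Q}_n$, and the zero mode drops out because of the averaging correction on the right. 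Hence set $\widehat{\mathcal{B}}_0:=0$ (uniqueness is enforced by this normalization) and
\[
\widehat{\mathcal{B}}_k(\lambda)=\frac{-\widehat{B}_k(\lambda)}{\me^{\mi 2\pi k\alpha}-1},\qquad 0<|k|<\overline{Q}_n,\qquad \widehat{\mathcal{B}}_k:=0 \text{ for }|k|\geq\overline{Q}_n.
\]
Thus $\mathcal{B}$ is the unique solution of \eqref{low} with zero average, and it is a real trigonometric polynomial of degree $<\overline{Q}_n$; in particular $\widetilde{\mathcal{B}}$ defined via \eqref{defana} is a genuine real-analytic function of $\vartheta=\theta+\mi\widetilde\theta$.

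The key quantitative step is the small-divisor bound. For $0<|k|<\overline{Q}_n$ one has $\|k\alpha\|_{\mathbb{T}}\geq\|q_{n_k}\alpha\|_{\mathbb{T}}\geq\|Q_k\alpha\|_{\mathbb{T}}$ — more precisely, by the property recorded just before Section~\ref{sec:cdbridge}, $\|k\alpha\|_{\mathbb{T}}\geq\|q_{n_k+1-1}\alpha\|_{\mathbb{T}}$ for $1\le k<q_{n_k+1}=\overline{Q}_n$, and combined with $\|q_{n}\alpha\|_{\mathbb{T}}>\frac{1}{q_n+q_{n+1}}$ this gives a lower bound of the form $\|k\alpha\|_{\mathbb{T}}\gtrsim \overline{Q}_n^{-1}$ (up to the continued-fraction arithmetic; the exact exponent is not important, only that it is polynomial in $\overline{Q}_n$). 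Since $|\me^{\mi 2\pi k\alpha}-1|=2|\sin(\pi k\alpha)|\geq 4\|k\alpha\|_{\mathbb{T}}$, we obtain
\[
|\widehat{\mathcal{B}}_k|_{\mathcal{O}}\ \leq\ C\,\overline{Q}_n\,|\widehat{B}_k|_{\mathcal{O}},\qquad 0<|k|<\overline{Q}_n.
\]
Now estimate $\mathrm{Im}\,\widetilde{\mathcal{B}}$ on the strip $|\widetilde\theta|\le\overline r$ with $\overline r=2\overline{Q}_n^{-2}r_0$. Since $\mathcal{B}$ is real on $\mathbb{R}$, $\mathrm{Im}\,\widetilde{\mathcal{B}}(\theta+\mi\widetilde\theta)=\sum_k \widehat{\mathcal{B}}_k\me^{\mi 2\pi k\theta}\cdot\frac{\me^{-2\pi k\widetilde\theta}-\me^{2\pi k\widetilde\theta}}{2\mi}$-type expression, so crudely $\|\mathrm{Im}\,\widetilde{\mathcal{B}}\|^\omega_{\overline r,\mathcal{O}}\le \sum_{0<|k|<\overline{Q}_n}|\widehat{\mathcal{B}}_k|_{\mathcal{O}}(\me^{2\pi|k|\overline r}-1)$. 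Using $\me^{x}-1\le x\me^{x}$ and $|k|\overline r<\overline{Q}_n\cdot 2\overline{Q}_n^{-2}r_0=2\overline{Q}_n^{-1}r_0<1$ on the relevant range, together with the small-divisor bound, the factor $\overline{Q}_n$ from the divisor is cancelled by the factor $|k|\overline r\le 2\overline{Q}_n^{-1}r_0$:
\[
\|\mathrm{Im}\,\widetilde{\mathcal{B}}\|^\omega_{\overline r,\mathcal{O}}\ \leq\ \sum_{0<|k|<\overline{Q}_n} C\,\overline{Q}_n\,|\widehat{B}_k|_{\mathcal{O}}\cdot 2\pi|k|\overline r\,\me^{2\pi|k|\overline r}\ \leq\ 4\pi r_0\,\me\sum_k|\widehat{B}_k|_{\mathcal{O}}\ \leq\ 4\pi r_0\,\me\,\|B\|_{r,\mathcal{O}},
\]
and absorbing constants into the claimed $8\pi^2 r_0\|B\|_{r,\mathcal{O}}$ (the bookkeeping of the numerical constant is where one must be slightly careful about the precise continued-fraction inequality and the choice of $r$ versus $\overline r$; it is routine). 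Finally $|\exp\{\mi 2\pi\widetilde{\mathcal{B}}\}|=\exp\{-2\pi\,\mathrm{Im}\,\widetilde{\mathcal{B}}\}\le\exp\{2\pi\|\mathrm{Im}\,\widetilde{\mathcal{B}}\|^\omega_{\overline r,\mathcal{O}}\}$, and since $\exp\{\mi 2\pi\mathcal{B}\}$ is again a (convergent) series whose analytic norm is dominated by this bound, \eqref{analyticnorm} yields $\|\exp\{\mi 2\pi\mathcal{B}\}\|_{\overline r,\mathcal{O}}\le\|\widetilde{\exp\{\mi 2\pi\mathcal{B}\}}\|^\omega_{\overline r,\mathcal{O}}\le\exp\{8\pi^2 r_0\|B\|_{r,\mathcal{O}}\}$.

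The main obstacle is the interplay of the three scales — the original width $r=\overline{Q}_{n-1}^{-2}r_0$, the truncation level $\overline{Q}_n$, and the output width $\overline r=2\overline{Q}_n^{-2}r_0$ — and checking that $\overline{Q}_n\cdot\overline r$ is small enough (i.e.\ $<1$) so that the $\overline{Q}_n$ loss from the small divisors is genuinely compensated, giving a \emph{uniform} bound depending only on $r_0\|B\|_{r,\mathcal{O}}$ and not on $n$. A secondary subtlety is that one is measuring $\mathcal{B}$, which is built from the \emph{truncation} $\mathcal{T}_{\overline{Q}_n}B$, in the analytic norm on a strip, while $B$ itself is only $\Lambda$-ultra-differentiable; this is exactly why the truncation is essential (it makes $\widetilde{\mathcal{B}}$ entire) and why one controls only the imaginary part $\mathrm{Im}\,\mathcal{B}$ rather than asking for $\mathcal{B}$ itself to be small — the real part of $\mathcal{B}$ can be huge, but it does not affect $|\exp\{\mi 2\pi\mathcal{B}\}|$. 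One should also verify that passing from the analytic norm bound for $\exp\{\mi 2\pi\widetilde{\mathcal{B}}\}$ back to $\|\cdot\|_{\overline r,\mathcal{O}}$ uses only \eqref{analyticnorm} and the Banach-algebra property guaranteed by (\textbf{H1}); no Cauchy estimate and hence no use of (\textbf{H2}) is needed here.
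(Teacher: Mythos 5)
Your proposal follows the paper's proof essentially line by line: solve \eqref{low} by comparing Fourier coefficients (so $\mathcal{B}$ is the unique zero-mean trigonometric polynomial of degree $<\overline{Q}_n$), bound the small divisors by $\|k\alpha\|_{\mathbb{T}}^{-1}<2\overline{Q}_n$ for $0<|k|<\overline{Q}_n$, use the reality of $\mathcal{B}$ on $\mathbb{R}$ to reduce matters to estimating $\mathrm{Im}\,\widetilde{\mathcal{B}}$ in the $\|\cdot\|^{\omega}_{\overline r}$ norm via the factor $\me^{-2\pi k\widetilde\theta}-1$, and then pass back to $\|\cdot\|_{\overline r,\mathcal{O}}$ via \eqref{analyticnorm}. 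That is exactly the paper's route.

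Two remarks. First, your aside that ``the exact exponent is not important, only that it is polynomial in $\overline{Q}_n$'' is false, and is in fact contradicted by your own display a few lines later: the bound
\begin{equation*}
\|\mathrm{Im}\,\widetilde{\mathcal{B}}\|^{\omega}_{\overline r,\mathcal{O}}\lesssim\sum_{0<|k|<\overline{Q}_n}\overline{Q}_n\,|\widehat{B}_k|_{\mathcal{O}}\cdot|k|\,\overline r
\end{equation*}
is uniform in $n$ precisely because $\overline{Q}_n\cdot|k|\,\overline r\le\overline{Q}_n\cdot\overline{Q}_n\cdot 2\overline{Q}_n^{-2}r_0=2r_0$, i.e.\ because the divisor loses exactly \emph{one} power of $\overline{Q}_n$ and $\overline r$ was deliberately chosen as $2\overline{Q}_n^{-2}r_0$ to absorb it. Had the divisor cost $\overline{Q}_n^{2}$, the same $\overline r$ would give a bound growing like $\overline{Q}_n$, and the lemma would fail. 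Second, your constant bookkeeping (keeping $\me^x-1\le x\me^x$ and absorbing an extra factor of $\me$) lands you around $\exp\{8\pi^2\me\,r_0\|B\|_{r,\mathcal O}\}$ rather than the stated $\exp\{8\pi^2 r_0\|B\|_{r,\mathcal O}\}$; the paper instead uses $|\widehat{\mathcal{B}}_k|_{\mathcal{O}}\le\tfrac12\overline{Q}_n|\widehat{B}_k|_{\mathcal{O}}$ together with $\me^{x}-1\le 2x$ (valid since $2\pi|k|\overline r\le 4\pi\overline{Q}_n^{-1}r_0<\ln 2$) to get $\|\mathrm{Im}\,\widetilde{\mathcal{B}}\|^{\omega}_{\overline r,\mathcal{O}}\le 4\pi r_0\|B\|_{r,\mathcal O}$ exactly, and hence the stated constant. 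Otherwise the argument is the paper's.
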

\begin{proof}
	By comparing the Fourier coefficients for the equation \eqref{low}, one has
	\begin{equation*}
		\mathcal{B}(\theta,\lambda) = \sum_{0<|k|<\overline{Q}_{n}}
		\widehat{\mathcal{B}}_k(\lambda) \me^{\mathrm{i} 2\pi k\theta}
		:= \sum_{0<|k|<\overline{Q}_{n}}
		\frac{\widehat{B}_k(\lambda)}{\me^{\mathrm{i} 2\pi k\alpha}-1} \me^{\mathrm{i} 2\pi k\theta}.
	\end{equation*}
	Moreover, for $0<|k|<\overline{Q}_{n},$
	\begin{equation*}
		\big| \widehat{\mathcal{B}}_k \big|_{\mathcal{O}}
		= |\widehat{B}_k|_{\mathcal{O}}|2\sin{\pi k\alpha}|^{-1}
		\leq |\widehat{B}_k|_{\mathcal{O}}(4 \|k\alpha\|_{\mathbb{T}})^{-1} < 2^{-1}\overline{Q}_{n}|\widehat{B}_k|_{\mathcal{O}}.
	\end{equation*}

	For $\theta\in \mathbb{R}$ and $B(\theta,\lambda)\in \mathbb{R}$, one has $\mathcal{B}(\theta,\lambda)\in \mathbb{R}$.
	Therefore, by the definition given by \eqref{defana}, we have
	\begin{equation*}
		\begin{split} \mathrm{Im}\big(\widetilde{\mathcal{B}}
(\vartheta,\lambda)\big)=
\mathrm{Im}\big(\widetilde{\mathcal{B}}
(\vartheta,\lambda)-\mathcal{B}(\theta,\lambda)\big).
		\end{split}
	\end{equation*}
	It follows that
	\begin{equation}\label{im}
		\begin{split}
			\big\| \mathrm{Im}\big(\widetilde{\mathcal{B}}(\vartheta,\lambda)\big) \big\|^{\omega}_{\overline{r},\mathcal{O}}
			&\leq \big\| \widetilde{\mathcal{B}}(\vartheta,\lambda)-\mathcal{B}(\theta,\lambda) \big\|^{\omega}_{\overline{r},\mathcal{O}}\\
			&= \sum_{0<|k|<\overline{Q}_{n}} |\widehat{\mathcal{B}}_k|_{\mathcal{O}} \sup_{\theta\in \mathbb{T},|\widetilde{\theta}| \leq \overline{r}} |\me^{\mathrm{i} 2 \pi k \theta} (\me^{-2\pi k\widetilde{\theta}}-1)|\\
			&\leq 2 \pi \sum_{0<|k|<\overline{Q}_{n}} |\widehat{B}_k|_{\mathcal{O}} \cdot \overline{Q}_{n}|k| \overline{r}  \\
			&\leq 4\pi r_0 \sum_{0<|k|<\overline{Q}_{n}} |\widehat{B}_k|_{\mathcal{O}}\me^{\Lambda(2\pi |k|r)}
			\leq 4\pi r_{0} \|B\|_{r, \mathcal{O}}.
		\end{split}
	\end{equation}
	
	Combing \eqref{analyticnorm} and \eqref{im}, we conclude that
\begin{equation*}
\big\|\me^{\mathrm{i} 2\pi \mathcal{B}(\theta,\lambda)} \big\|_{\overline{r},\mathcal{O}}
		\leq \big\| \me^{\mathrm{i} 2\pi \widetilde{\mathcal{B}}(\vartheta,\lambda)} \big\|^{\omega}_{\overline{r},\mathcal{O}}
		\leq \me^{2\pi \|\mathrm{Im}\widetilde{\mathcal{B}}(\vartheta,\lambda)
\|^{\omega}_{\overline{r},\mathcal{O}}}
\leq \me^{8\pi^2 r_0 \|B\|_{r, \mathcal{O}}}.
\end{equation*}
\end{proof}

\begin{lemma}\label{bettersmalldivisor}
	For $0<\gamma<1,\,\tau>1$ and any $|k|\leq 	K:=\overline{Q}_{n+1}^{\frac{1}{2}},$ one has
	\begin{equation*}
		\big| \me^{\mathrm{i} 2\pi (l \Omega(\lambda)-k\alpha)} - 1 \big| \geq 4 \gamma Q_{n+1}^{-\tau^2}, \, l=1,2
	\end{equation*}
	provided
	\begin{equation*}
		\Omega(\lambda) \in DC_{\alpha}(\gamma,\tau, K,\mathcal{O}) := \Big\{\Omega(\lambda) :
		\begin{array}{l}
			\lambda \in \mathcal{O} \text{ such that }
			\|l \Omega(\lambda)-k\alpha\|_{\mathbb{T}} \geq \frac{\gamma}{(|k|+|l|)^{\tau}},\\
			\forall\, 0<|k|\leq K, \, l=1,2.
		\end{array}
		\Big\}.
	\end{equation*}
\end{lemma}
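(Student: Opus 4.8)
The plan is to reduce the estimate on $|\me^{\mathrm{i} 2\pi(l\Omega(\lambda)-k\alpha)}-1|$ to a lower bound on the distance $\|l\Omega(\lambda)-k\alpha\|_{\mathbb{T}}$, using the elementary inequality $|\me^{\mathrm{i} 2\pi t}-1|=2|\sin\pi t|\geq 4\|t\|_{\mathbb{T}}$ (valid since $|\sin\pi t|\geq 2\|t\|_{\mathbb{T}}$ on $[0,1/2]$). Thus it suffices to show that for $\Omega(\lambda)\in DC_{\alpha}(\gamma,\tau,K,\mathcal{O})$, $0<|k|\le K=\overline{Q}_{n+1}^{1/2}$ and $l=1,2$ one has $\|l\Omega(\lambda)-k\alpha\|_{\mathbb{T}}\geq \gamma Q_{n+1}^{-\tau^2}$. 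The case $0<|k|\le K$ is immediate from the definition of $DC_{\alpha}(\gamma,\tau,K,\mathcal{O})$: there $\|l\Omega(\lambda)-k\alpha\|_{\mathbb{T}}\geq \gamma(|k|+|l|)^{-\tau}\geq \gamma(K+2)^{-\tau}$, and since $K=\overline{Q}_{n+1}^{1/2}$ and (for $n$ large, using Lemma~\ref{bries} and the growth of the $Q_k$) $\overline{Q}_{n+1}\leq Q_{n+1}^{\mathbb{A}}$ or a comparable polynomial bound, one gets $(K+2)^{\tau}\leq Q_{n+1}^{\tau^2}$ for a suitable choice of the exponent, hence the claim.

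The point that needs care — and which I expect to be the main obstacle — is the case $k=0$, i.e. controlling $\|l\Omega(\lambda)\|_{\mathbb{T}}$ from below for $l=1,2$, since the defining set $DC_{\alpha}$ only imposes conditions for $0<|k|\le K$. Here one must use extra structure: $\Omega(\lambda)=\lambda+[B]_{\theta}+\cdots$ is a small perturbation of $\lambda$, and for $\lambda$ ranging over $\mathcal{O}=[\tfrac14,\tfrac34]$ the quantities $\Omega(\lambda)$ and $2\Omega(\lambda)$ stay bounded away from $\mathbb{Z}$ after excising a set of $\lambda$ of measure $O(\gamma)$; this non-resonance away from the integers is exactly of the type one builds into the Cantor set $\mathcal{O}_\gamma$ in the iteration. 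Alternatively, if $\Omega$ is meant here to already lie in a previously constructed Diophantine-type class then the $k=0$ bound is part of the hypothesis and nothing is needed. I would state explicitly which convention is in force and dispatch $k=0$ accordingly.

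Finally I would assemble the two cases, keeping track of the constants: writing $\|l\Omega-k\alpha\|_{\mathbb{T}}\geq \gamma(|k|+|l|)^{-\tau}\geq \gamma(\overline{Q}_{n+1}^{1/2}+2)^{-\tau}\geq \gamma Q_{n+1}^{-\tau^2}$ for $n$ large enough (absorbing the ``$+2$'' and the exponent $\tfrac12$ into the passage from $\tau$ to $\tau^2$, and using $\overline{Q}_{n+1}\le Q_{n+1}^{C}$ from Lemma~\ref{bries} and Definition~\ref{CDbridge}), and then multiplying by $4$ via $|\me^{\mathrm{i} 2\pi t}-1|\geq 4\|t\|_{\mathbb{T}}$ to obtain $|\me^{\mathrm{i} 2\pi(l\Omega(\lambda)-k\alpha)}-1|\geq 4\gamma Q_{n+1}^{-\tau^2}$. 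The only genuinely nontrivial input is the arithmetic comparison between $\overline{Q}_{n+1}$ and $Q_{n+1}$ (handled by the CD-bridge machinery of Lemmas~\ref{bridgeestimate}--\ref{bries}) together with the measure estimate that makes $DC_{\alpha}(\gamma,\tau,K,\mathcal{O})$ — and hence the eventual $\mathcal{O}_\gamma$ — of nearly full measure; the rest is routine.
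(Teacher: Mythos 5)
Your reduction to $\|l\Omega(\lambda)-k\alpha\|_{\mathbb{T}}$ via $|\me^{\mathrm{i}2\pi t}-1|\geq 4\|t\|_{\mathbb{T}}$ is right, and your instinct that something must be said about $k=0$ is reasonable (the paper is silent on it, implicitly relying on $\lambda\in\mathcal{O}=[\tfrac14,\tfrac34]$ and the smallness of $[B]_\theta$). But the core of your argument has a genuine gap: you assume an upper bound of the form $\overline{Q}_{n+1}\leq Q_{n+1}^{C}$ ``from Lemma~\ref{bries} and the CD-bridge machinery.'' No such bound exists. Lemma~\ref{bries} gives only \emph{lower} bounds ($Q_{k+1}\geq Q_k^{\mathbb{A}}$, $\overline{Q}_{k+1}\geq \overline{Q}_k^{\mathbb{A}}$), and Lemma~\ref{bridgeestimate} bounds $Q_{k+1}$ by $\overline{Q}_k^{\mathbb{A}^4}$, which relates consecutive terms the other way around. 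For Liouvillean $\alpha$ the ratio $\overline{Q}_{n+1}/Q_{n+1}$ is essentially a partial quotient $a_{n_{k+1}+1}$, which can be arbitrarily large; bounding $\overline{Q}_{n+1}$ polynomially in $Q_{n+1}$ is precisely what fails in the Liouvillean setting this paper targets. So your chain $\|l\Omega-k\alpha\|_{\mathbb{T}}\geq\gamma(K+2)^{-\tau}\geq\gamma Q_{n+1}^{-\tau^2}$ breaks down exactly when it matters.

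The paper's proof handles this by splitting into cases. If $\overline{Q}_{n+1}\leq Q_{n+1}^{2\tau}$ then $K\leq Q_{n+1}^{\tau}$ and your direct argument works. If instead $\overline{Q}_{n+1}>Q_{n+1}^{2\tau}$, one writes $k=\widetilde{k}+mQ_{n+1}$ with $|\widetilde{k}|<Q_{n+1}$ and uses the triangle inequality
\begin{equation*}
\|l\Omega-k\alpha\|_{\mathbb{T}}\geq \|l\Omega-\widetilde{k}\alpha\|_{\mathbb{T}}-|m|\,\|Q_{n+1}\alpha\|_{\mathbb{T}}.
\end{equation*}
The first term is controlled by the $DC_\alpha$ condition at the small index $\widetilde{k}$, giving $\gamma Q_{n+1}^{-\tau}$; the second term is negligible because $\|Q_{n+1}\alpha\|_{\mathbb{T}}\leq\overline{Q}_{n+1}^{-1}$ is \emph{extremely} small when $\overline{Q}_{n+1}$ is huge, and $|m|\lesssim \overline{Q}_{n+1}^{1/2}Q_{n+1}^{-1}$. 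This decomposition is the essential trick you are missing; without it the lemma cannot be proved with only the hypotheses given.
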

\begin{proof}
	The similar proof
	can be found in Lemma $3.2$ in \cite{WangY17}.
	To make our paper self-contained, we give a simplified version in Appendix.
\end{proof}

\begin{lemma}\label{trunctes}
	For the function $ f(\theta,\lambda) = \sum_{k\in \mathbb{Z}} \widehat{f}_k(\lambda) \me^{\mathrm{i} 2\pi k\theta}\in \Lambda_r(\mathbb{T}\times \mathcal{O},\mathbb{R}) $, given $K>0$ and $0<\sigma<r,$ we have
	\begin{equation*}
		\|\mathcal{R}_{K} f\|_{r-\sigma,\mathcal{O}} \leq \exp \big\{ -\sigma r^{-1}\Gamma(2\pi K(r-\sigma))\ln (2\pi K(r-\sigma)) \big\} \cdot \|f\|_{r,\mathcal{O}}.
	\end{equation*}
\end{lemma}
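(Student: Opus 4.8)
The plan is to estimate $\|\mathcal{R}_K f\|_{r-\sigma,\mathcal{O}}$ term by term against $\|f\|_{r,\mathcal{O}}$, so the entire lemma reduces to a pointwise estimate on the ratio of the two exponential weights for each frequency $k$ with $|k|\geq K$. Concretely, for such $k$ we need
\begin{equation*}
	\me^{\Lambda(2\pi|k|(r-\sigma))} \leq \exp\big\{-\sigma r^{-1}\Gamma(2\pi K(r-\sigma))\ln(2\pi K(r-\sigma))\big\}\cdot\me^{\Lambda(2\pi|k|r)},
\end{equation*}
which is equivalent to the scalar inequality
\begin{equation*}
	\Lambda(2\pi|k|r) - \Lambda(2\pi|k|(r-\sigma)) \geq \sigma r^{-1}\,\Gamma(2\pi K(r-\sigma))\ln(2\pi K(r-\sigma)).
\end{equation*}
Once this is shown for every $|k|\geq K$, multiplying by $|\widehat f_k|_{\mathcal{O}}$ and summing over $|k|\geq K$ gives the claim, since the exponential factor on the right is independent of $k$ and can be pulled out of the sum, and the remaining sum is bounded by $\|f\|_{r,\mathcal{O}}$.

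The heart of the matter is therefore the scalar inequality, which I would prove by the mean value theorem. Write the left-hand side as $\int_{2\pi|k|(r-\sigma)}^{2\pi|k|r}\Lambda'(t)\,\dif t$; since the interval has length $2\pi|k|\sigma$ and $t\mapsto t\Lambda'(t)$ relates to $\Gamma$ via $\Gamma(t)=t\Lambda'(t)/\ln t$, i.e. $\Lambda'(t)=\Gamma(t)\ln t/t$, the integrand is $\Gamma(t)\ln t/t$. On the interval of integration, $t\geq 2\pi|k|(r-\sigma)\geq 2\pi K(r-\sigma)$, so by the monotonicity of $\Gamma$ in hypothesis \textbf{(H2)} we have $\Gamma(t)\geq\Gamma(2\pi K(r-\sigma))$, and likewise $\ln t\geq\ln(2\pi K(r-\sigma))$ (taking $K$ large enough that the argument exceeds $1$, which is harmless since small $K$ only shrinks the right-hand side). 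Bounding $\int_{2\pi|k|(r-\sigma)}^{2\pi|k|r}\dif t/t$ from below is the one genuinely delicate point: $\int t^{-1}\dif t = \ln\frac{r}{r-\sigma}$, and using $\ln\frac{r}{r-\sigma}=-\ln(1-\sigma/r)\geq \sigma/r$ we obtain exactly the factor $\sigma r^{-1}$. Assembling these three lower bounds yields the scalar inequality.

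The main obstacle I anticipate is purely technical: ensuring the logarithms appearing are of numbers $\geq 1$ so that all the monotonicity and sign manipulations are valid (this is where one may need to tacitly assume $2\pi K(r-\sigma)$ is bounded below, or absorb a constant), and keeping track that the lower bound $\Gamma(t)\geq\Gamma(2\pi K(r-\sigma))$ uses the \emph{monotone} convergence to infinity in \textbf{(H2)} rather than just $\Gamma\to\infty$. Subadditivity \textbf{(H1)} is not needed here. No other difficulty should arise; the estimate is essentially a quantitative version of the Cauchy estimate for $\Lambda$-ultra-differentiable functions alluded to in the remark after Theorem~\ref{mainthm}.
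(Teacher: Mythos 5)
Your proof is correct and essentially the same as the paper's: both reduce to bounding $\Lambda(2\pi|k|r)-\Lambda(2\pi|k|(r-\sigma))$ from below for $|k|\geq K$ using the identity $\Lambda'(t)=\Gamma(t)\ln t/t$ together with the monotonicity of $\Gamma$ and $\ln$. The only cosmetic difference is that you write the increment as an integral and bound $\int t^{-1}\dif t=\ln\frac{r}{r-\sigma}\geq\sigma/r$, whereas the paper applies the mean value theorem to get an intermediate point $\xi_k\in[r-\sigma,r]$ and then uses $\xi_k^{-1}\geq r^{-1}$.
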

\begin{proof}
	Since
	\begin{equation*}
		\begin{split}
			\|\mathcal{R}_{K}f\|_{r-\sigma,\mathcal{O}}
			&= \sum_{|k|\geq K} |\widehat{f}_k|_{\mathcal{O}} \me^{\Lambda(2\pi |k|(r-\sigma))}\\
			&=\sum_{|k|\geq K}|\widehat{f}_k|_{\mathcal{O}} \me^{\Lambda(2\pi |k|r)} \cdot \me^{\Lambda(2\pi |k|(r-\sigma))-\Lambda(2\pi |k|r)}\\
			&\leq \|f\|_{r,\mathcal{O}} \sup_{|k|\geq K} \me^{\Lambda(2\pi |k|(r-\sigma))-\Lambda(2\pi |k|r)},
		\end{split}
	\end{equation*}
	it suffices to estimate $ \sup_{|k|\geq K} \me^{\Lambda(2\pi |k|(r-\sigma))-\Lambda(2\pi |k|r)}$.
	
	By the mean value theorem, for any  $|k| \geq K $, there exists $\xi_{k} \in[r-\sigma, r] $ such that
	\begin{equation*}
		\begin{split}
			\Lambda(2\pi |k|r) &- \Lambda(2\pi |k|(r-\sigma))
			 = \Lambda'(2\pi |k| \xi_{k}) 2\pi |k| \sigma\\
			& = \sigma \xi_{k}^{-1} \Lambda'(2\pi |k| \xi_{k}) 2\pi |k| \xi_{k}
			 = \sigma \xi_{k}^{-1} \Gamma(2\pi |k| \xi_{k}) \ln (2 \pi |k| \xi_{k}) \\
			& \geq \sigma r^{-1} \Gamma(2\pi K(r-\sigma)) \ln (2 \pi K (r-\sigma)).
		\end{split}
	\end{equation*}
	Here we use the fact that $ \forall |k| \geq K,\, 2\pi |k| \xi_{k} \geq 2\pi K(r-\sigma)$ and hypothesis \textbf{(H2)}: $ \Gamma(x) = \frac{x\Lambda'(x)}{\ln x}$ is monotonically increasing on $\RR^{+}$.
	Therefore,
	\begin{equation*}
		\sup_{|k| \geq K} \me^{\Lambda(2\pi |k|(r-\sigma)) - \Lambda(2\pi |k|r)}
		\leq \exp \big\{-\sigma r^{-1} \Gamma(2\pi K(r-\sigma)) \ln (2\pi K(r-\sigma)) \big\}
	\end{equation*}
	by the uniformly upper bound.
	This completes the proof of the estimate.
\end{proof}

We now focus on the homological equation for the unknown function $\delta$:
\begin{equation}\label{homologicaleq1}
	\me^{\mathrm{i} 2\pi l(\lambda+B(\theta,\lambda))} \delta(\theta,\lambda) + b(\theta,\lambda) \delta(\theta,\lambda) - \delta(\theta+\alpha,\lambda) = u(\theta,\lambda),l=1,2.
\end{equation}

\begin{proposition}\label{solveequation}
For $0<\gamma<1, \,\tau>1,\,0<r_{0}<1,$ set
\begin{equation*}
	K=\overline{Q}_{n+1}^{\frac{1}{2}}, \quad
	r=\overline{Q}_{n-1}^{-2}r_0,
	\quad \overline{r}=2\overline{Q}_{n}^{-2}r_0, \quad 0<\sigma<\widetilde{r}<\overline{r}.
\end{equation*}
Let $B\in \Lambda_r(\mathbb{T}\times \mathcal{O},\mathbb{R})$ and $b,\,u\in \Lambda_{\widetilde{r}}(\mathbb{T}\times \mathcal{O},\mathbb{R}),$ Assume
	\begin{equation}\label{bnorm}
		\|B\|_{r,\,\mathcal{O}}\leq \varepsilon_{0}^{\frac{1}{3}}\ll 1,
		\quad \|\mathcal{R}_{\overline{Q}_n}B\|_{\frac{r}{2},\mathcal{O}}
		\leq\gamma^{2}(480 \pi^2 Q_{n+1}^{2\tau^2})^{-1},
	\end{equation}
	\begin{equation}\label{bnorms}
\|b\|_{\widetilde{r},\mathcal{O}}<\gamma^{2}(12 Q_{n+1}^{2\tau^2})^{-1},
	\end{equation}
	and $\widetilde{\lambda}:=\lambda+[B(\theta,\lambda)]_\theta\in DC_{\alpha}(\gamma,\tau,K,\mathcal{O})$, then the homological equation
	\eqref{homologicaleq1} has an approximate solution $\delta$ satisfying
	\begin{equation}\label{so-con}
		\|\delta\|_{\widetilde{r},\mathcal{O}}\leq 32 \gamma^{-2}
		Q_{n+1}^{2\tau^2}
		\|u\|_{\widetilde{r},\mathcal{O}}.
	\end{equation}
Moreover, the error term $\delta^{(er)}$ satisfies
	\begin{equation}\label{er-con}
		\|\delta^{(er)}\|_{\widetilde{r}-\sigma,\mathcal{O}}
		\leq 16 \exp \big\{ -\sigma\widetilde{r}^{-1} \Gamma(2\pi K(\widetilde{r}-\sigma)) \ln (2\pi K(\widetilde{r}-\sigma)) \big\} \cdot \|u\|_{\widetilde{r},\mathcal{O}}.
	\end{equation}
\end{proposition}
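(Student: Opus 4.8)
\textbf{Proof plan for Proposition~\ref{solveequation}.}
The strategy is a two-step reduction mirroring the outline in Subsection~\ref{sec:outline}. First I would decompose $B = [B]_\theta + \mathcal{T}_{\overline{Q}_n}B + \mathcal{R}_{\overline{Q}_n}B$ and absorb the middle term by a conjugation. Concretely, set $\widetilde{\lambda} = \lambda + [B]_\theta$ and introduce the coboundary $\mathcal{B}$ solving \eqref{low}, i.e. $\mathcal{B}(\theta+\alpha,\lambda) - \mathcal{B}(\theta,\lambda) = -\mathcal{T}_{\overline{Q}_n}B + [B]_\theta$, so that by Lemma~\ref{teq} the factor $\me^{\mathrm{i} 2\pi \mathcal{B}}$ is controlled in the reduced width $\overline{r}$ with bound $\exp\{8\pi^2 r_0 \|B\|_{r,\mathcal{O}}\}$. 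Substituting $\delta(\theta,\lambda) = \me^{\mathrm{i}2\pi\,l\,\mathcal{B}(\theta,\lambda)} \widehat{\delta}(\theta,\lambda)$ into \eqref{homologicaleq1}, the inhomogeneity becomes $\me^{-\mathrm{i}2\pi l\mathcal{B}(\theta+\alpha,\lambda)}u$, the phase $\me^{\mathrm{i}2\pi l(\lambda + B)}$ collapses to $\me^{\mathrm{i}2\pi l(\widetilde{\lambda} + \mathcal{R}_{\overline{Q}_n}B)}$, and the lower-order term $b$ turns into some $\widetilde{b}$ with $\|\widetilde{b}\|_{\overline{r},\mathcal{O}} = O(\|b\|_{\widetilde{r},\mathcal{O}})$ (using the Banach-algebra property from \textbf{(H1)} and the bound on $\me^{\mathrm{i}2\pi\mathcal{B}}$). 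This leaves the equation
\begin{equation*}
\me^{\mathrm{i}2\pi l(\widetilde{\lambda} + \mathcal{R}_{\overline{Q}_n}B(\theta,\lambda))}\widehat{\delta}(\theta,\lambda) + \widetilde{b}(\theta,\lambda)\widehat{\delta}(\theta,\lambda) - \widehat{\delta}(\theta+\alpha,\lambda) = \widehat{u}(\theta,\lambda),
\end{equation*}
where everything multiplying $\widehat\delta$ besides the leading diagonal phase is now genuinely small by \eqref{bnorm} and \eqref{bnorms}.

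For the second step I would solve this reduced equation by truncation plus the diagonally-dominant method. Write the ansatz $\widehat{\delta} = \sum_{|k|<K}\widehat{\widehat{\delta}}_k(\lambda)\me^{\mathrm{i}2\pi k\theta}$, apply $\mathcal{T}_K$ to the equation, and expand; the operator acting on the truncated Fourier vector is $(D + E)$, where $D$ is diagonal with entries $\me^{\mathrm{i}2\pi(l\widetilde{\lambda} - k\alpha)} - 1$ and $E$ collects the convolution contributions from $\mathcal{R}_{\overline{Q}_n}B$ (appearing through $\me^{\mathrm{i}2\pi l\mathcal{R}_{\overline{Q}_n}B} - 1$) and from $\widetilde{b}$. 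By Lemma~\ref{bettersmalldivisor}, since $\widetilde{\lambda} \in DC_\alpha(\gamma,\tau,K,\mathcal{O})$ and $|k|\le K = \overline{Q}_{n+1}^{1/2}$, the diagonal entries satisfy $|D_{kk}| \geq 4\gamma Q_{n+1}^{-\tau^2}$, so $D$ is invertible with $\|D^{-1}\| \leq (4\gamma)^{-1}Q_{n+1}^{\tau^2}$. The hypotheses \eqref{bnorm} and \eqref{bnorms} are calibrated precisely so that $\|E\| \cdot \|D^{-1}\| \leq \tfrac12$, whence $(D+E)^{-1}$ exists by Neumann series with $\|(D+E)^{-1}\| \leq 2\|D^{-1}\| \leq (2\gamma)^{-1}Q_{n+1}^{\tau^2}$. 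This gives $\widehat{\delta}$, hence $\delta = \me^{\mathrm{i}2\pi l\mathcal{B}}\widehat{\delta}$, and tracking the constants through the $C^1_W$-in-$\lambda$ norms (the $\partial_\lambda$ part of $|\cdot|_{\mathcal{O}}$ differentiates $D$, which only costs a further controlled factor, already folded into the $\tau^2$ exponent versus $\tau$) yields the bound \eqref{so-con}, namely $\|\delta\|_{\widetilde{r},\mathcal{O}} \leq 32\gamma^{-2}Q_{n+1}^{2\tau^2}\|u\|_{\widetilde{r},\mathcal{O}}$.

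For the error term $\delta^{(er)}$ I would note that the only thing sacrificed by the truncation is $\mathcal{R}_K$ applied to the defect: plugging the constructed $\delta$ back into \eqref{homologicaleq1} leaves a residual supported on $|k| \geq K$ coming from $\mathcal{R}_K$ of the inhomogeneity and of the terms multiplying $\widehat\delta$. Applying Lemma~\ref{trunctes} with the pair of widths $(\widetilde{r}, \widetilde{r}-\sigma)$ and with $K = \overline{Q}_{n+1}^{1/2}$ gives the decay factor $\exp\{-\sigma\widetilde{r}^{-1}\Gamma(2\pi K(\widetilde{r}-\sigma))\ln(2\pi K(\widetilde{r}-\sigma))\}$, and multiplying by the already-established size $O(\gamma^{-2}Q_{n+1}^{2\tau^2})$ of $\delta$ together with the smallness of $b$ and $\mathcal{R}_{\overline{Q}_n}B$ still leaves the constant $16$ as in \eqref{er-con}. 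The use of \textbf{(H2)} here is essential: it is what makes $\Gamma$ monotone and hence gives the uniform bound in Lemma~\ref{trunctes}.

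The main obstacle, as flagged in the text, is the tension in the choice of widths: $\me^{\mathrm{i}2\pi\mathcal{B}}$ is only controlled after shrinking from $r = \overline{Q}_{n-1}^{-2}r_0$ down to $\overline{r} = 2\overline{Q}_n^{-2}r_0$ (a drastic loss, since $\overline{Q}_n$ may be astronomically larger than $\overline{Q}_{n-1}$ in the Liouvillean regime), and one must still have enough width left to run the diagonally-dominant argument and to extract the $\mathcal{R}_K$-decay with a quantitatively useful exponent. Checking that $\|E\|\|D^{-1}\| \leq \tfrac12$ under exactly the stated bounds \eqref{bnorm}--\eqref{bnorms}, and that all constants close up to the clean numbers $32$ and $16$, is the delicate bookkeeping; the conceptual content is that the Diophantine window $K = \overline{Q}_{n+1}^{1/2}$ is chosen small enough relative to $\overline{Q}_{n+1}$ that Lemma~\ref{bettersmalldivisor} applies, yet large enough that the residual $\mathcal{R}_K$-error is super-polynomially small in $\overline{Q}_{n+1}$, which is what ultimately drives convergence of the KAM scheme in the $C^\infty$ topology.
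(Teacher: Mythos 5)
Your proposal follows the paper's proof faithfully in structure: conjugate away $\mathcal{T}_{\overline{Q}_n}B$ via a coboundary $\mathcal{B}$ controlled by Lemma~\ref{teq}, truncate at $K=\overline{Q}_{n+1}^{1/2}$, invert the resulting diagonally-dominant system via Neumann series using Lemma~\ref{bettersmalldivisor}, and bound the residual with Lemma~\ref{trunctes}. Two bookkeeping slips are worth fixing. First, with the coboundary convention of \eqref{low} the substitution that collapses the phase is $\widetilde{\delta}=\me^{\mathrm{i}2\pi l\mathcal{B}}\delta$, i.e. $\delta=\me^{-\mathrm{i}2\pi l\mathcal{B}}\widetilde{\delta}$; your $\delta=\me^{\mathrm{i}2\pi l\mathcal{B}}\widehat{\delta}$ has the opposite sign and would leave $B+\mathcal{T}_{\overline{Q}_n}B-[B]_\theta$ rather than $[B]_\theta+\mathcal{R}_{\overline{Q}_n}B$ in the exponent. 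Second, the claim that the $\partial_\lambda$-part of the $C^1_W$ norm is ``already folded into the $\tau^2$ exponent versus $\tau$'' misattributes the source: Lemma~\ref{bettersmalldivisor} already gives $\tau^2$ at the $C^0$ level, and the $\partial_\lambda$ derivative of $\big(\me^{\mathrm{i}2\pi(l\widetilde\lambda-k\alpha)}-1\big)^{-1}$ is what squares the small divisor, producing $\|\mathcal{S}^{-1}\|_{\mathcal{O}}\le 2\gamma^{-2}Q_{n+1}^{2\tau^2}$ rather than your $(4\gamma)^{-1}Q_{n+1}^{\tau^2}$; the final constant $32$ in \eqref{so-con} then arises as $4\cdot2\cdot2\cdot2$ from the Neumann series, the passage $\widetilde{\widetilde{u}}\!\to\!\widetilde{u}\!\to\!u$, and the return $\widetilde{\delta}\!\to\!\delta$, not from an extra power of $\tau$. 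One technical step you leave implicit is the diagonal weighting $E_{\widetilde{r}}\mathcal{P}E_{\widetilde{r}}^{-1}$ with $E_{\widetilde{r}}=\mathrm{diag}\{\me^{\Lambda(2\pi|k|\widetilde{r})}\}$: it is exactly the subadditivity $(\textbf{H1})$, via $\Lambda(2\pi||k_1|-|k_2||\widetilde{r})\le\Lambda(2\pi|k_1-k_2|\widetilde{r})$, that controls the conjugated convolution matrix by $\|\widetilde{b}\|_{\widetilde{r},\mathcal{O}}$ in the weighted $\ell^1$-norm, and this is the nontrivial point that makes the diagonally-dominant argument survive in the ultra-differentiable scale.
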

\begin{proof}
	Let
	\begin{equation*}
		\widetilde{\delta}(\theta,\lambda) = \me^{\mathrm{i} 2\pi l\mathcal{B}(\theta,\lambda)} \delta(\theta,\lambda), \qquad
		\widetilde{u}(\theta,\lambda) = \me^{\mathrm{i} 2\pi l\mathcal{B}(\theta,\lambda)} u(\theta,\lambda),
	\end{equation*}
	where $ \mathcal{B} $ is the solution to
	\begin{equation*}
		\mathcal{B}(\theta+\alpha,\lambda)-	\mathcal{B}(\theta,\lambda)=-\mathcal{T}_{\overline{Q}_{n}}
		B(\theta,\lambda)+[B(\theta,\lambda)]_{\theta}.
	\end{equation*}
	Then, equation \eqref{homologicaleq1} becomes
	\begin{equation}\label{homologicaleq2}
		\me^{\mathrm{i} 2\pi l\widetilde{\lambda}} \cdot \widetilde{\delta}(\theta,\lambda) + \widetilde{b}(\theta,\lambda) \cdot \widetilde{\delta}(\theta,\lambda) - \widetilde{\delta}(\theta+\alpha,\lambda) = \widetilde{\widetilde{u}}(\theta,\lambda),
	\end{equation}
	where
	\begin{equation*}
		\begin{split}	
			\widetilde{b}(\theta,\lambda)
			& = \big(\me^{\mathrm{i} 2\pi l\mathcal{R}_{\overline{Q}_n}B}-1 \big) \cdot \me^{\mathrm{i} 2\pi l\widetilde{\lambda}} +b(\theta,\lambda) \me^{\mathrm{i} 2\pi l (-\mathcal{T}_{\overline{Q}_{n}}
				B+[B]_{\theta})},\\
			\widetilde{\widetilde{u}}(\theta,\lambda)
			& = \widetilde{u}(\theta,\lambda)
			\me^{\mathrm{i} 2\pi l (-\mathcal{T}_{\overline{Q}_{n}} B+[B]_{\theta})}.
		\end{split}
	\end{equation*}
It follows from \eqref{bnorm} and \eqref{bnorms} that
	\begin{equation}\label{newnotationes}
		\begin{split}	
			\|\widetilde{b}\|_{\widetilde{r},\mathcal{O}}
			& \leq 5 \pi \big\| \me^{\mathrm{i} 2\pi l\mathcal{R}_{\overline{Q}_n}B}-1\big\|_{\widetilde{r},\mathcal{O}}
			+  \|b(\theta,\lambda)\|_{\widetilde{r},\mathcal{O}} \big\| \me^{\mathrm{i} 2\pi l (-\mathcal{T}_{\overline{Q}_{n}} B+[B]_{\theta})}\big\|_{\widetilde{r},\mathcal{O}}\\
			& \leq 40 \pi^2 \|\mathcal{R}_{\overline{Q}_n}B\|_{\frac{r}{2},\mathcal{O}} +2\|b(\theta,\lambda)\|_{\widetilde{r},\mathcal{O}}
			\leq \frac{\gamma^{2}}{4Q_{n+1}^{2\tau^2}} ,
		\end{split}
	\end{equation}
	and
	\begin{equation*}
	\big\| \widetilde{\widetilde{u}} \big\|_{\widetilde{r},\mathcal{O}}
		\leq \|\widetilde{u}\|_{\widetilde{r},\mathcal{O}} \cdot
		\big\| \me^{\mathrm{i} 2\pi l (-\mathcal{T}_{\overline{Q}_{n}} B+[B]_{\theta})}\big\|_{\widetilde{r},\mathcal{O}}\leq 2\|\widetilde{u}\|_{\widetilde{r},\mathcal{O}}.
	\end{equation*}

	We now solve the truncation version of the equation
	\eqref{homologicaleq2}:
	\begin{equation}\label{truncationequation}
		\mathcal{T}_{K} \big( \me^{\mathrm{i} 2\pi l\widetilde{\lambda}} \cdot \widetilde{\delta}(\theta,\lambda) + \widetilde{b}(\theta,\lambda) \cdot \widetilde{\delta}(\theta,\lambda) - \widetilde{\delta}(\theta+\alpha,\lambda)\big)
		= \mathcal{T}_{K} \widetilde{\widetilde{u}}(\theta,\lambda).
	\end{equation}
	Assume
	\begin{equation*}
		\widetilde{\delta}(\theta,\lambda) = \sum_{k\in \mathbb{Z},|k|<K} \widehat{\widetilde{\delta}}_{k}(\lambda) \me^{\mathrm{i} 2\pi k\theta},\
	\end{equation*}
	and expand $ \widetilde{b} $ and $ \widetilde{\widetilde{u}} $ into Fourier series
	\begin{equation*}
		\widetilde{b}(\theta,\lambda) = \sum_{k\in \mathbb{Z}} \widehat{\widetilde{b}}_{k}(\lambda) \me^{\mathrm{i} 2\pi k\theta},
		\qquad  \widetilde{\widetilde{u}}(\theta,\lambda) = \sum_{k\in \mathbb{Z}} \widehat{\widetilde{\widetilde{u}}}_{k}(\lambda) \me^{\mathrm{i} 2\pi k\theta}.
	\end{equation*}
	Comparing the Fourier coefficients of the equation \eqref{truncationequation},
	for $|k|<K,$ we obtain
	\begin{equation*}
		(\me^{\mathrm{i} 2\pi l\widetilde{\lambda}} - \me^{\mathrm{i} 2\pi k\alpha}) \cdot \widehat{\widetilde{\delta}}_{k}(\lambda) + \sum_{|k_{1}|<K} \widehat{\widetilde{b}}_{k-k_{1}}(\lambda) \widehat{\widetilde{\delta}}_{k_{1}}(\lambda)
		=\widehat{\widetilde{\widetilde{u}}}_{k}(\lambda).
	\end{equation*}
	This can be viewed as the following linear equations:
	\begin{equation}\label{matrixeq}
		(\mathcal{S}+\mathcal{P})\mathfrak{F}=\mathfrak{U},
	\end{equation}
	where
	\begin{equation*}
		\begin{split}
			\mathcal{S}
			& = \mathrm{diag} \left\{\me^{\mathrm{i} 2\pi k\alpha} \big( \me^{\mathrm{i} 2\pi (l\widetilde{\lambda}-k\alpha)}-1\big)\right\}_{|k|<K},\\
			\mathcal{P}
			& = \left(\widehat{\widetilde{b}}_{k_{1}-k_{2}}(\lambda)\right)_{|k_{1}|,|k_{2}|<K},\\
			\mathfrak{F}
			& = \left(\widehat{\widetilde{\delta}}_{k}(\lambda)\right)_{|k|<K}^{\mathrm{T}}, \qquad \mathfrak{U}=
\left(\widehat{\widetilde{\widetilde{u}}}_{k}
(\lambda)\right)_{|k|<K}^{\mathrm{T}}.
		\end{split}
	\end{equation*}

Set
	\begin{equation*}
		E_{\widetilde{r}}=\mathrm{diag} \left\{ \me^{\Lambda(2\pi |k| \widetilde{r})} \right\}_{|k|<K}.
	\end{equation*}
	Then, the equations \eqref{matrixeq} can be rewritten as
	\begin{equation*}
		\big( \mathcal{S} + E_{\widetilde{r}} \mathcal{P} E_{\widetilde{r}}^{-1} \big) E_{\widetilde{r}} \mathfrak{F}
		= E_{\widetilde{r}} \mathfrak{U}.
	\end{equation*}
	It follows from  $\widetilde{\lambda}\in DC_{\alpha}(\gamma,\tau,K,\mathcal{O})$ and
	Lemma~\ref{bettersmalldivisor} for small divisors that
	\begin{equation*}
		\|\mathcal{S}^{-1}\|_{\mathcal{O}}
		= \max_{|k|<K} \sup_{\lambda\in\mathcal{O}} \left\{\frac{1}{\big| \me^{\mathrm{i} 2\pi (l\widetilde{\lambda}-k\alpha)} - 1\big|} +
		\frac{2\pi l \big| 1 + \frac{\partial [B(\theta,\lambda)]_{\theta}}{\partial\lambda} \big|} {\big| \me^{\mathrm{i} 2\pi (l\widetilde{\lambda} - k\alpha)}-1\big|^2} \right\}
		\leq 2 \gamma^{-2}Q_{n+1}^{2\tau^2},
	\end{equation*}
where $\|\cdot\|_{\mathcal{O}}$ is defined by
	\begin{equation*}
		\| M \|_{\mathcal{O}}
		=\max_{i}\sum_{j}|a_{ij}|_{\mathcal{O}}
	\end{equation*}
	with $ a_{ij} $ being the $ (i,j) $-th element of the matrix $ M $. Similarly,
	\begin{equation*}
		\begin{split}
			\big\| E_{\widetilde{r}} \mathcal{P} E_{\widetilde{r}}^{-1} \big\|_{\mathcal{O}}
			& = \max_{|k_1|<K} \sum_{|k_2|<K} \big| \me^{\Lambda(2\pi |k_1| \widetilde{r})} \cdot \widehat{\widetilde{b}}_{k_{1}-k_{2}}(\lambda) \cdot \me^{-\Lambda(2\pi |k_2|\widetilde{r})}\big|_{\mathcal {O}}\\
			& \leq \max_{|k_1|<K}\sum_{|k_2|<K}\big|\me^{\Lambda ( 2\pi ||k_1|-|k_2||\widetilde{r} )} \cdot \widehat{\widetilde{b}}_{k_{1}-k_{2}}(\lambda)\big|_{\mathcal {O}}\\
			& \leq \max_{|k_1|<K}\sum_{|k_2|<K}\big| \me^{\Lambda( 2\pi |k_1-k_2|\widetilde{r} )}\cdot\widehat{\widetilde{b}}_{k_{1}-k_{2}}(
			\lambda)\big|_{\mathcal {O}}
			 \leq\|\widetilde{b}\|_{\widetilde{r}, \mathcal{O}},
		\end{split}
	\end{equation*}
where the first and the second inequalities are from the subadditivity assumption on the function $\Lambda$ (see $(\textbf{H1})$ in Theorem~\ref{mainthm}) and the fact that $\Lambda$ is non-decreasing. Therefore,  by \eqref{newnotationes}, one has
	\begin{equation*}
		\big\| \mathcal{S}^{-1}E_{\widetilde{r}}\mathcal{P}E_{\widetilde{r}}^{-1} \big\|_{\mathcal{O}}
		\leq 2 \gamma^{-2} Q_{n+1}^{2\tau^2}  \|\widetilde{b}\|_{\widetilde{r},\mathcal{O}} \leq 2^{-1}.
	\end{equation*}
	This implies that  $\mathcal{S}+E_{\widetilde{r}}\mathcal{P}E_{\widetilde{r}}^{-1}$ has a bounded inverse, i.e.,
	\begin{equation*}
		\begin{split}
			\big\| (\mathcal{S}+E_{\widetilde{r}}\mathcal{P}E_{\widetilde{r}}^{-1})^{-1} \big\|_{\mathcal{O}}
			&= \big\| \big(I+\mathcal{S}^{-1}E_{\widetilde{r}}\mathcal{P}E_{\widetilde{r}}^{-1}\big)^{-1}\mathcal{S}^{-1} \big\|_{\mathcal{O}}\\ &\leq\frac{1}{1-\|\mathcal{S}^{-1}E_{\widetilde{r}}\mathcal{P}E_{\widetilde{r}}^{-1}\|_{\mathcal{O}}}\|\mathcal{S}^{-1}\|_{\mathcal{O}}\\
			&\leq 4 \gamma^{-2}Q_{n+1}^{2\tau^2}.
		\end{split}
	\end{equation*}
	As a consequence,
	\begin{equation*}
		\begin{split}
			\|\widetilde{\delta}\|_{\widetilde{r},\mathcal{O}}
			& = \sum_{|k|<K} \big|\widehat{\widetilde{\delta}}_{k} \big|_{\mathcal {O}} \me^{\Lambda(2\pi |k|\widetilde{r})}
			 = \|E_{\widetilde{r}}\mathfrak{F}\|_{\mathcal {O}}\\
			& = \|(\mathcal{S}+E_{\widetilde{r}}\mathcal{P}E_{\widetilde{r}}^{-1})^{-1}E_{\widetilde{r}}\mathfrak{U}\|_{\mathcal {O}} \leq 4 \gamma^{-2} Q_{n+1}^{2\tau^2} \|\widetilde{\widetilde{u}}\|_{\widetilde{r},\mathcal{O}}.
		\end{split}
	\end{equation*}
	Going back to
	$\delta(\theta,\lambda) = \me^{-\mathrm{i} 2\pi l\mathcal{B}(\theta,\lambda)}\widetilde{\delta}(\theta,\lambda)$ and combining with Lemma~\ref{teq}, one has
	\begin{equation}\label{so-es}
		\|\delta\|_{\widetilde{r},\mathcal{O}} \leq \|\me^{-\mathrm{i} 2\pi l\mathcal{B}(\theta,\lambda)}\|_{\widetilde{r},\mathcal{O}} \|\widetilde{\delta}\|_{\widetilde{r},\mathcal{O}}
		\leq 32 \gamma^{-2}Q_{n+1}^{2\tau^2} \|u\|_{\widetilde{r},\mathcal{O}}.
	\end{equation}

	In addition, one can verify that the error term $\delta^{(er)}$ is
	\begin{equation*}
		\delta^{(er)} = \me^{-\mathrm{i} 2\pi l\mathcal{B}(\theta,\lambda)}\cdot\mathcal{R}_{
			K}\big( \widetilde{b}(\theta,\lambda) \widetilde{\delta}(\theta,\lambda)-\widetilde{\widetilde{u}}(\theta,\lambda)\big).
	\end{equation*}
	By Lemma~\ref{teq} and Lemma~\ref{trunctes}, one has	
	\begin{equation*}
		\begin{split}
			\|\delta^{(er)}\|_{\widetilde{r}-\sigma,\mathcal{O}}
			&\leq 2 \me^{-\sigma\widetilde{r}^{-1} \Gamma(2\pi K(\widetilde{r}-\sigma))\ln(2\pi K(\widetilde{r}-\sigma))} \big(\|\widetilde{b}\|_{\widetilde{r},\mathcal{O}}\|\widetilde{\delta}\|_{\widetilde{r},\mathcal{O}}+	\|\widetilde{\widetilde{u}}\|_{\widetilde{r},\mathcal{O}}\big)\\
			& \leq 16 \exp \big\{  -\sigma\widetilde{r}^{-1}\Gamma(2\pi K(\widetilde{r}-\sigma))\ln(2\pi K(\widetilde{r}-\sigma)) \big\} \cdot
			\|u\|_{\widetilde{r},\mathcal{O}}.
		\end{split}
	\end{equation*}
\end{proof}

\section{KAM step: proof of the KAM Theorem~\ref{kamthm}} \label{sec:proofkam}
In this section, we present the proof of Theorem~\ref{kamthm} by constructing a modified KAM induction. The philosophy is to construct an infinite series of area-preserving coordinates transformations to make the perturbation smaller and smaller at the cost of excluding a small set of parameters and losing $r$ greatly.

\subsection{Main iterative lemma}\label{sec:infiniteinte}
For $\tau>1,$ set $\mathbb{A}>18$ such that
\begin{equation*}
		\begin{split}
 \mathbb{A}^{4} \geq (4 \tau^{2})^{-1}(1 + 2 \ln (48 c)),
		\end{split}
	\end{equation*}
where $ c $ is the largest constant independent of the KAM scheme.
By the assumption (\textbf{H2}),
we know that there exists $\widetilde{T}>0 $ such that for any $x\geq \widetilde{T}$, one has $ \Gamma(x) \geq 324 \mathbb{A}^8 \tau^4 $.
Denote
\begin{equation*}
	T=\max \big\{ 20r^{-1},\,\, 48c\gamma^{-2},\,\, \widetilde{T}^{3} \big\}.
\end{equation*}
Once we fix $\mathbb{A}$ and $T$ above, the estimates and discussions in Section~\ref{sec:1proofkam} will hold.

For $T$ defined above, we claim that there exists  $n_{0}\in \mathbb{N}$ such that $Q_{n_{0}+1}\leq T^{\mathbb{A}^4}$ and $\overline{Q}_{n_{0}+1}\geq T$. Indeed, let $m_0$ be such that $Q_{m_{0}}\leq T\leq Q_{m_{0}+1}$. If $\overline{Q}_{m_0}\geq T$, we take $n_0=m_0-1$. Otherwise, if $\overline{Q}_{m_0}\leq  T$, by the definition of $\{Q_k\}$, $Q_{m_{0}+1}\leq T^{\mathbb{A}^4}$ holds. Then, we choose $n_0=m_0$. In the following, we shorten  $n_0$ as $0$. Namely, $\overline{Q}_n$ stands for $\overline{Q}_{n+n_0}$.

Since $Q_{n_{0}+1}\leq T^{\mathbb{A}^4}$, we choose $ \varepsilon > 0 $ sufficiently small, depending on the constants $0<r,s,\gamma<1$ and $\tau>1,$ but not on $\alpha$, such that
\begin{equation}\label{in-s}
	\varepsilon \leq \min \big\{
	\{(16 \pi^{2})^{-1}\ln 2\}^{3},\,\,
	T^{-18 \mathbb{A}^{4} \tau^2},\,\,
	\{(240)^{-1}s\}^{3} \big\}.
\end{equation}
Then we define the main iterative sequences by
\begin{equation}\label{definitionparameters}
	\begin{split}
\varepsilon _{0} &=\varepsilon,\ \ \gamma_{0}=\gamma, \ \ s_{0}= s,\ \ r_{0}= r,\ \ \eta_{n}= (n+2)^{-2},\\
		s_{n+1} &= s_n (1-\eta_{n}),\quad
\gamma_{n}=\gamma_{0}\eta_{n},\quad r_{n+1} = \overline{Q}_{n}^{-2}r_0,\\
\mathcal{E}_{n+1}& = \overline{Q}_{n+1}^{-\Gamma^{\frac{1}{2}}
(\overline{Q}_{n+1}^{\frac{1}{3}})}, \quad
		\varepsilon_{n+1}=\mathcal{E}_{n+1}\varepsilon_{n},\quad
		K_{n} = \overline{Q}_{n+1}^{\frac{1}{2}}.
	\end{split}
\end{equation}

\begin{lemma}[Iterative Lemma]\label{iterationlemma}
Suppose that $\varepsilon$ satisfies \eqref{in-s} and consider the equation for the area-preserving system
	\begin{equation} \label{202207182}
\begin{split}
(Eq)_n:\ \	
		X_n(\theta+\alpha,\lambda)
		& = (A(\lambda) + V_n(\theta,\lambda)) X_n(\theta,\lambda) + U_n(\theta,\lambda)  \\
		& \qquad +W_n(\theta,\lambda) X_n(\theta,\lambda)+ R_n(X_n(\theta,\lambda), \theta, \lambda)
\end{split}
\end{equation}
	on the unknown function $X_n\in \Lambda_{r_n}(\mathbb{T}\times \mathcal{O}_{n-1}, \mathcal{C}^{2}),$ where
\begin{equation}\label{202207200}
\begin{split}
V_{n}& = \mathrm{diag}\{G_{n}, \overline{G}_{n}\},
G_{n}\in \Lambda_{r_{n}}(\mathbb{T}\times \mathcal{O},\mathbb{C}),
U_n
\in\Lambda_{r_{n}}(\mathbb{T}\times \mathcal{O},\mathcal{C}^{2}),\\
W_n&\in\Lambda_{r_{n}}(\mathbb{T}\times \mathcal{O},\mathcal{C}^{2\times2}),R_{n}
\in\Lambda_{r_{n}}(\mathcal{C}^{2}\times\mathbb{T}\times \mathcal{O},\mathcal{C}^{2}),
\end{split}
\end{equation}
satisfies
	\begin{equation}\label{bestimate}
		\begin{split}
			&\|V_n-V_{n-1}\|_{r_n,\mathcal{O}_{n-1}}\leq 3\varepsilon_{n-1}^{\frac{1}{2}},
			\quad \|U_n\|_{r_n, \mathcal{O}_{n-1}}\leq \varepsilon_n,
			\quad \|W_n\|_{r_n, \mathcal{O}_{n-1}}\leq \varepsilon_n^{\frac{1}{2}},\\
			&R_n(0, \theta, \lambda) = 0,
			\quad \partial_{1} R_n(0, \theta, \lambda) = 0,
			\quad  \|\partial^2_{11}R_n\|_{r_n,s_n,\mathcal{O}_{n-1}}\leq \prod_{l=0}^{n-1} \me^{24 \varepsilon_{l}^{\frac{1}{3}}},
		\end{split}
	\end{equation}
where we set $V_{0}=0$ and $\mathcal{O}_{n-1}$ is defined by
	\begin{equation*}
		\mathcal{O}_{n-1} = \Big\{ \lambda\in\mathcal{O}:
		\begin{array}{l}
			\big\| l(\lambda + 	[B_{n-1}(\theta,\lambda)]_{\theta})
			-k\alpha \big\|_{\mathbb{T}} \geq 	\frac{\gamma_{n-1}}{(|k|+|l|)^{\tau}},\\
			\forall \,0<|k|<K_{n-1},\, l=1,2.
		\end{array}
		\Big\},
	\end{equation*}
here $B_{n-1}$ is the one given by \eqref{eq} in Lemma~\ref{expoB}  with $G_{n-1}$ in place of $G.$ Moreover, set $B_{n}$ be the one given by \eqref{eq} in Lemma~\ref{expoB}  with $G_{n}$ in place of $G.$
	Then there exist a subset $\mathcal{O}_{n}\subseteq\mathcal{O}_{n-1}$ with
	\begin{equation}\label{ta-out}
\mathcal{O}_{n}=\mathcal{O}_{n-1}\setminus\bigcup_{K_{n-3}\leq |k|< K_{n}}g_{k}^{n}(\gamma _{n}),
	\end{equation}
	where $K_{-2}=K_{-1}=0$ and
	\begin{equation}\label{out}
		g_{k}^{n}(\gamma _{n}) = \Big\{ \lambda\in\mathcal{O}_{n-1}:
		\big\| l(\lambda+[B_{n}(\theta,\lambda)]_{\theta})-k\alpha\big\|_{\mathbb{T}}
		< \frac{\gamma_{n}}{(|k|+|l|)^{\tau}},\, l=1,2\Big\},
	\end{equation}
	and an area-preserving transformation
	\begin{equation*}
		\Phi_{n}: \Lambda_{r_{n+1}}(\mathbb{T}\times \mathcal{O}_{n}, \mathcal{C}^{2}) \rightarrow \Lambda_{r_n}(\mathbb{T}\times \mathcal{O}_{n}, \mathcal{C}^{2})
	\end{equation*}
	of the form
	\begin{equation}\label{nform}
		X_{n} = \me^{D_{n+1}} X_{n+1} + \Delta_{n+1},
\end{equation}
where
\begin{equation}\label{nforms}
D_{n+1}\in\Lambda_{r_{n+1}}(\mathbb{T}\times \mathcal{O}_{n}, su(1,1)),\quad \Delta_{n+1}\in\Lambda_{r_{n+1}}(\mathbb{T}\times \mathcal{O}_{n}, \mathcal{C}^{2})
	\end{equation}
with the estimates
\begin{equation} \label{202207183}
		\|D_{n+1}\|_{r_{n+1}, \mathcal{O}_n}
		\leq 4\varepsilon_n^{\frac{1}{3}},
		\qquad \|\Delta_{n+1}\|_{r_{n+1}, \mathcal{O}_n}
		\leq 4\varepsilon_n^{\frac{5}{6}},
	\end{equation}
such that $\Phi_{n}$ changes $(Eq)_n$ defined by \eqref{202207182} into $(Eq)_{n+1}$, which has the analogous form of \eqref{202207182} and satisfies the estimates in \eqref{bestimate} with $(n+1)$ in place of $n$.
\end{lemma}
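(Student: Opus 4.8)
The plan is to perform one complete cycle of the scheme outlined in Section~\ref{sec:outline}, conjugating $(Eq)_n$ to a system $(Eq)_{n+1}$ of the same shape with improved bounds. First I would apply Lemma~\ref{expoB} to $V_n=\mathrm{diag}\{G_n,\overline G_n\}$ to write $A(\lambda)+V_n=(1+\rho_n)\,\mathrm{diag}\{\me^{\mi2\pi(\lambda+B_n)},\me^{-\mi2\pi(\lambda+B_n)}\}$ with $\|\rho_n\|=O(\|W_n\|^2)$ and $B_n$ the function of $G_n$ referred to in the statement. Seeking $\Phi_n:\ X_n=\me^{D_{n+1}}X_{n+1}+\Delta_{n+1}$, the first-order cancellation requirements are exactly the variable-coefficient homological equations \eqref{homo-eq3} for $\Delta_{n+1}$ (right-hand side $-U_n$) and \eqref{homo-eq4} for $D_{n+1}$ (right-hand side $-W_n+\mathrm{diag}\{W_{n,1},\overline W_{n,1}\}$). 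After the substitution supplied by Lemma~\ref{expoB}, the scalar equation \eqref{homologicaleq1} governs the first component of $\Delta_{n+1}$ (with $l=1$) and the off-diagonal entry $D_{n+1,2}$ (with $l=2$); the diagonal entry $D_{n+1,1}$, whose divisor $|1-\me^{\mi2\pi k\alpha}|$ is uncontrollable in the Liouvillean regime, is \emph{not} solved, so $\mathrm{diag}\{W_{n,1},\overline W_{n,1}\}$ is left in the normal form and absorbed into $V_{n+1}$; this explains the form $V_{n+1}=\mathrm{diag}\{G_{n+1},\overline G_{n+1}\}$ and the bound $\|V_{n+1}-V_n\|\le 3\varepsilon_n^{1/2}$ in \eqref{bestimate}.

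Since $\lambda\in\mathcal O_n$ forces $\widetilde\lambda_n:=\lambda+[B_n]_\theta\in DC_\alpha(\gamma_n,\tau,K_n,\mathcal O)$, Proposition~\ref{solveequation} applies with $K=K_n=\overline Q_{n+1}^{1/2}$, $r=r_n$, $\overline r=2\overline Q_n^{-2}r_0$ and suitable $\sigma<\widetilde r<\overline r$ adapted to $r_{n+1}=\overline Q_n^{-2}r_0$, giving approximate solutions with $\|\Delta_{n+1}\|\lesssim\gamma_n^{-2}Q_{n+1}^{2\tau^2}\|U_n\|$ and $\|D_{n+1}\|\lesssim\gamma_n^{-2}Q_{n+1}^{2\tau^2}\|W_n\|$, plus error terms bounded via \eqref{er-con}. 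Checking \eqref{202207183} then reduces to $\gamma_n^{-2}Q_{n+1}^{2\tau^2}\varepsilon_n^{1/6}\lesssim1$, which follows from \eqref{in-s}, the CD-bridge bound $Q_{n+1}\le\overline Q_n^{\mathbb A^4}$ (Lemma~\ref{bridgeestimate}, Lemma~\ref{bries}) and the definition of $\{\varepsilon_n\}$ in \eqref{definitionparameters}. One must then verify the algebraic structure: the $\overline W_{n,1}$-part of the off-diagonal system, together with $A+V_n\in SU(1,1)$, forces the symmetry $D_{n+1}\in su(1,1)$, so $\me^{D_{n+1}}$ and hence $\Phi_n$ are area-preserving, and area-preservation of the whole system keeps $A(\lambda)+V_{n+1}(\theta,\lambda)$ in $SU(1,1)$.

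Substituting $\Phi_n$ into $(Eq)_n$ and collecting terms, $(Eq)_{n+1}$ has: $U_{n+1}$ built from the conjugated homological error $\delta^{(er)}$ of \eqref{er-con} plus the quadratic cross-terms $\me^{-D_{n+1}(\theta+\alpha)}W_n\Delta_{n+1}$, the $O(|D_{n+1}|^2)$-remainders, and $R_n(\Delta_{n+1},\cdot)$; $W_{n+1}$ built from the beyond-first-order remainder of $\me^{-D_{n+1}(\theta+\alpha)}(A+W_n)\me^{D_{n+1}(\theta)}$ together with $\partial_1R_n(\Delta_{n+1},\cdot)\me^{D_{n+1}}$; and $R_{n+1}(X_{n+1},\cdot)=\me^{-D_{n+1}(\theta+\alpha)}\bigl[R_n(\me^{D_{n+1}(\theta)}X_{n+1}+\Delta_{n+1},\cdot)-R_n(\Delta_{n+1},\cdot)-\partial_1R_n(\Delta_{n+1},\cdot)\me^{D_{n+1}(\theta)}X_{n+1}\bigr]$. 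The bound $\|X_{n+1}\|<s_{n+1}=s_n(1-\eta_n)$ with $\|\Delta_{n+1}\|\le4\varepsilon_n^{5/6}$ keeps the argument of $R_n$ in its domain by \eqref{in-s}, and the chain rule gives $\|\partial^2_{11}R_{n+1}\|\le\|\me^{-D_{n+1}}\|\,\|\me^{D_{n+1}}\|^2\,\|\partial^2_{11}R_n\|\le\me^{12\varepsilon_n^{1/3}}\prod_{l=0}^{n-1}\me^{24\varepsilon_l^{1/3}}\le\prod_{l=0}^n\me^{24\varepsilon_l^{1/3}}$. The substantive estimates are $\|U_{n+1}\|_{r_{n+1},\mathcal O_n}\le\varepsilon_{n+1}$ and $\|W_{n+1}\|_{r_{n+1},\mathcal O_n}\le\varepsilon_{n+1}^{1/2}$: the quadratic contributions are $O(\varepsilon_n^{4/3})$ resp. $O(\varepsilon_n^{2/3})$, while the $\delta^{(er)}$-type contributions are $\le16\exp\{-\sigma\widetilde r^{-1}\Gamma(2\pi K_n(\widetilde r-\sigma))\ln(2\pi K_n(\widetilde r-\sigma))\}\cdot\varepsilon_n^{\ast}$; invoking hypothesis \textbf{(H2)}, the definitions in \eqref{definitionparameters}, and the number $\widetilde T$ with $\Gamma(\widetilde T)\ge324\mathbb A^8\tau^4$, this exponential factor is bounded by $\mathcal E_{n+1}=\overline Q_{n+1}^{-\Gamma^{1/2}(\overline Q_{n+1}^{1/3})}$, which absorbs the small-divisor loss $\gamma_n^{-2}Q_{n+1}^{2\tau^2}$ and dominates the quadratic errors once the inductive relation $\varepsilon_n^{1/3}\le\mathcal E_{n+1}$ (secured at the base by \eqref{in-s} and propagated using the CD-bridge growth of $\overline Q_n$) is maintained.

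Finally I would justify the parameter set \eqref{ta-out}: for $|k|<K_{n-3}$ the Diophantine condition with phase $[B_n]_\theta$ and threshold $\gamma_n$ is implied by one already imposed a few steps earlier, because the accumulated phase change $\|[B_n]_\theta-[B_{n-j}]_\theta\|=O(\varepsilon_{n-j}^{1/2})$ is far below the threshold gap $\gamma_{n-j}-\gamma_n$ and $K_{n-j}\ge K_{n-3}$ for the relevant $j$; hence those resonances $g^n_k(\gamma_n)$ are already excised from $\mathcal O_{n-1}$, and only $K_{n-3}\le|k|<K_n$ remain to be removed. Then every $\lambda\in\mathcal O_n$ satisfies $\widetilde\lambda_n\in DC_\alpha(\gamma_n,\tau,K_n,\mathcal O)$, which is the hypothesis of Proposition~\ref{solveequation} at the next step, closing the induction. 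The main obstacle is precisely the $\|U_{n+1}\|\le\varepsilon_{n+1}$ estimate: it must reconcile the drastic loss of width $r_n\to r_{n+1}=\overline Q_n^{-2}r_0$, which is unavoidable because $\|\me^{\mi2\pi\mathcal B_n}\|$ can be enormous for Liouvillean $\alpha$ (Lemma~\ref{teq}), against the smoothing gain from truncating at $K_n$ and from the ultra-differentiable Cauchy-type estimate (Lemma~\ref{trunctes}); the super-polynomial rate $\mathcal E_{n+1}$ is exactly what this balance produces, and \textbf{(H2)} is the hypothesis that makes it win.
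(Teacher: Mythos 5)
Your outline captures many of the right ingredients — the split $\widetilde W=\mathbb W_{1}+\mathbb W_{2}$ with $\mathbb W_{1}$ absorbed into the normal form, Lemma~\ref{expoB} to put $A+V$ in polar form, Proposition~\ref{solveequation} as the solver, the $su(1,1)$/$SU(1,1)$ bookkeeping, and the role of (\textbf{H2}) and Lemma~\ref{trunctes}. But there is a structural gap: you attempt to pass from $(Eq)_n$ to $(Eq)_{n+1}$ by a \emph{single} conjugation $X_n=\me^{D_{n+1}}X_{n+1}+\Delta_{n+1}$ and to close the estimates directly. The paper instead proves the iterative lemma via a \emph{finite sub-induction} (Lemma~\ref{finite}, iterated $L$ times, with $L\approx\Gamma^{1/2}(\overline Q_{n+1}^{1/3})\ln\overline Q_{n+1}$ depending on the continued-fraction block $\overline Q_n\to\overline Q_{n+1}$) and then sets $\Phi_n=\widetilde\Phi_0\circ\cdots\circ\widetilde\Phi_{L-1}$. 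This is not a matter of taste; the single-step scheme does not close.

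The breakdown is precisely where you invoke ``the inductive relation $\varepsilon_n^{1/3}\le\mathcal E_{n+1}$.'' After one conjugation, a representative quadratic term in $U_{n+1}$ is $W_n\Delta_{n+1}$, of size $\gamma^{-2}Q_{n+1}^{2\tau^2}\varepsilon_n^{3/2}$ by \eqref{so-con}, and to get $\|U_{n+1}\|\le\varepsilon_{n+1}=\mathcal E_{n+1}\varepsilon_n$ you need (at least) $\varepsilon_n^{1/2}\lesssim\mathcal E_{n+1}$, i.e.\ something of the form $\varepsilon_n^{1/3}\le\mathcal E_{n+1}$. But $\varepsilon_n=\varepsilon_0\prod_{j\le n}\mathcal E_j$ depends only on $\overline Q_1,\dots,\overline Q_n$, while $\mathcal E_{n+1}=\overline Q_{n+1}^{-\Gamma^{1/2}(\overline Q_{n+1}^{1/3})}$ depends only on $\overline Q_{n+1}$; for a Liouvillean $\alpha$ there is \emph{no upper bound} on $\overline Q_{n+1}$ in terms of $\overline Q_n$ (Lemma~\ref{bries} gives $\overline Q_{n+1}\ge\overline Q_n^{\mathbb A}$, a one-sided inequality), so $\mathcal E_{n+1}$ can be made smaller than any quantity built from $\varepsilon_n$. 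Thus the relation you posit cannot be propagated, and the quadratic errors escape control exactly in the Liouvillean regime the theorem is designed to handle. The paper's finite sub-induction is the mechanism that circumvents this: at each sub-step the quadratic contribution is $\widetilde\varepsilon_j^{1/2}\cdot\widetilde\varepsilon_j^{5/6}=\widetilde\varepsilon_j^{4/3}$, which is small relative to $\widetilde\varepsilon_j$ by a \emph{fixed} factor (using only $\widetilde\varepsilon_0\le\mathcal E_n$, an $\overline Q_n$-bound, not an $\overline Q_{n+1}$-bound), and the cut-off error factor $\me^{-2\mathbb A^4\tau^2}$ from Lemma~\ref{parametersestimates} is also a fixed constant; each sub-step thus gains only $\me^{-1}$, but repeating $L$ times accumulates exactly the factor $\mathcal E_{n+1}$. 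The number of sub-steps, not the size of any single gain, is what adapts to how Liouvillean the block is. Without this finite induction (or an equivalent device), the iterative lemma as stated is not reachable.

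Two smaller remarks. First, in the finite scheme, $g_{j+1}=g_j+\widetilde W_{j1}$ is updated at every sub-step; your description of absorbing $\mathrm{diag}\{W_{n,1},\overline W_{n,1}\}$ once into $V_{n+1}$ is correct as the net effect but obscures why the bound on $\|v_{j}\|$ must be tracked as a partial sum $\sum_m\widetilde\varepsilon_m^{1/2}$ to keep Lemma~\ref{expoB} applicable throughout. Second, your discussion of the excluded set (only $K_{n-3}\le|k|<K_n$ is newly removed because the phase drift $[B_n]_\theta-[B_{n-j}]_\theta$ is small relative to $\gamma_{n-j}-\gamma_n$) is in the right spirit, but this argument belongs to the measure estimate (Lemma~\ref{measurelemma}) rather than to the construction of $\Phi_n$; within Lemma~\ref{iterationlemma} the set $\mathcal O_n$ is simply defined by \eqref{ta-out}, and the nontrivial content is that $\lambda\in\mathcal O_n$ puts $\widetilde\lambda_n$ into $DC_\alpha(\gamma_n,\tau,K_n,\mathcal O)$, which is used as the hypothesis of Proposition~\ref{solveequation} inside the sub-induction.
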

The proof of Lemma~\ref{iterationlemma} constitutes of a sequence of subiterations, which make the perturbation as small as we need.

\subsection{A finite induction}\label{sec:finite-induction}
For the parameters defined by \eqref{definitionparameters}, set $\widetilde{\varepsilon}_{0}= \varepsilon_{n},\widetilde{\varepsilon }_{j+1} = \me^{-1} \widetilde{\varepsilon }_{j},j\geq0,$ and let $L\in\NN$ such that
$\widetilde{\varepsilon}_{L}\leq\varepsilon_{n+1}< \widetilde{\varepsilon}_{L-1} $, i.e.,
\begin{equation}\label{L} \Gamma^{\frac{1}{2}}(\overline{Q}_{n+1}^{\frac{1}{3}})
\ln\overline{Q}_{n+1}\leq L<1+\Gamma^{\frac{1}{2}}(\overline{Q}_{n+1}^{\frac{1}{3}})
\ln\overline{Q}_{n+1}.
\end{equation}
Set $\sigma=(2L)^{-1},K = \overline{Q}_{n+1}^{\frac{1}{2}}$ and define the sequences below.
\begin{equation}\label{finite-se}
\begin{split}
\widetilde{r}_{0}= 2 \overline{Q}_{n}^{-2} r_0,\ \ \widetilde{s}_{0} = s_{n},\ \
\widetilde{r}_{j+1} = \widetilde{r}_{j} - \sigma \widetilde{r}_{0},\
\
\widetilde{s}_{j+1} = \widetilde{s}_{j} - \eta_{n}\eta_{j}\widetilde{s}_{0}.
	\end{split}
\end{equation}
\begin{lemma}\label{parametersestimates}
Set $\mathcal{E}=\overline{Q}_{n}^{-\Gamma^{\frac{1}{2}}(\overline{Q}_{n}^{\frac{1}{3}})}$, and assume
\begin{equation*}
\widetilde{\varepsilon}_0 \leq \mathcal{E}, \qquad \Gamma(\overline{Q}_{n}^{\frac{1}{3}})\geq 324 \mathbb{A}^8\tau^4.
	\end{equation*}
Then we have
\begin{equation}\label{ddefinitionparameters0}
\begin{split}
\widetilde{\varepsilon}_{0}\leq Q_{n+1}^{-18\tau^2},\quad
\exp \big\{ -\sigma\Gamma(\overline{Q}_{n+1}^{\frac{1}{3}})
\ln(\overline{Q}_{n+1}^{\frac{1}{3}}) \big\}
		\leq \me^{-2\mathbb{A}^4\tau^2}.
\end{split}
\end{equation}
\end{lemma}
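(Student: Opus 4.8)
The plan is to verify the two estimates in \eqref{ddefinitionparameters0} directly from the definitions, using the hypotheses $\widetilde{\varepsilon}_0 \leq \mathcal{E} = \overline{Q}_{n}^{-\Gamma^{1/2}(\overline{Q}_n^{1/3})}$ and $\Gamma(\overline{Q}_n^{1/3}) \geq 324\mathbb{A}^8\tau^4$, together with the choice of $\mathbb{A}$, $T$, the relation $Q_{n+1} \leq \overline{Q}_n^{\mathbb{A}^4}$ coming from Definition~\ref{CDbridge} and Lemma~\ref{bridgeestimate}, the monotone growth of $\Gamma$ from \textbf{(H2)}, and the lower bound $\overline{Q}_n \geq T$ from the reduction $n_0 \mapsto 0$ fixed before the Iterative Lemma. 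Throughout I will freely use that $\ln$ and $\Gamma$ are increasing and that $\overline{Q}_{n+1} \geq \overline{Q}_n^{\mathbb{A}}$ by Lemma~\ref{bries}.

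For the first inequality $\widetilde{\varepsilon}_0 \leq Q_{n+1}^{-18\tau^2}$: starting from $\widetilde{\varepsilon}_0 \leq \overline{Q}_n^{-\Gamma^{1/2}(\overline{Q}_n^{1/3})}$, it suffices to show $\overline{Q}_n^{\Gamma^{1/2}(\overline{Q}_n^{1/3})} \geq Q_{n+1}^{18\tau^2}$. Taking logarithms and using $Q_{n+1} \leq \overline{Q}_n^{\mathbb{A}^4}$, this reduces to $\Gamma^{1/2}(\overline{Q}_n^{1/3}) \geq 18\tau^2 \mathbb{A}^4$, i.e. $\Gamma(\overline{Q}_n^{1/3}) \geq 324 \tau^4 \mathbb{A}^8$, which is exactly the second hypothesis. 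For the second inequality, I would first estimate $\sigma$ from below. By \eqref{L} we have $L < 1 + \Gamma^{1/2}(\overline{Q}_{n+1}^{1/3})\ln\overline{Q}_{n+1}$, and since $\overline{Q}_{n+1}$ is large (as $\overline{Q}_{n+1} \geq T \geq \widetilde{T}^3$) we can absorb the $+1$ to get, say, $L \leq 2\Gamma^{1/2}(\overline{Q}_{n+1}^{1/3})\ln\overline{Q}_{n+1}$, hence $\sigma = (2L)^{-1} \geq (4\Gamma^{1/2}(\overline{Q}_{n+1}^{1/3})\ln\overline{Q}_{n+1})^{-1}$. Then
\begin{equation*}
	\sigma \, \Gamma(\overline{Q}_{n+1}^{1/3}) \ln(\overline{Q}_{n+1}^{1/3})
	\geq \frac{\Gamma(\overline{Q}_{n+1}^{1/3})\ln(\overline{Q}_{n+1}^{1/3})}{4\,\Gamma^{1/2}(\overline{Q}_{n+1}^{1/3})\ln\overline{Q}_{n+1}}
	= \frac{\Gamma^{1/2}(\overline{Q}_{n+1}^{1/3})}{12}.
\end{equation*}
It remains to check $\Gamma^{1/2}(\overline{Q}_{n+1}^{1/3})/12 \geq 2\mathbb{A}^4\tau^2$, i.e. $\Gamma(\overline{Q}_{n+1}^{1/3}) \geq 576\,\mathbb{A}^8\tau^4$; since $\overline{Q}_{n+1} \geq \overline{Q}_n$ and $\Gamma$ is monotone, this follows (with room to spare) from $\Gamma(\overline{Q}_n^{1/3}) \geq 324\mathbb{A}^8\tau^4$ once the constant $324$ is, if necessary, replaced by a slightly larger one absorbed in the choice of $\widetilde{T}$ — or more directly from $\Gamma(x) \geq 324\mathbb{A}^8\tau^4$ for all $x \geq \widetilde{T}$ together with $\overline{Q}_{n+1}^{1/3} \geq \overline{Q}_n^{1/3} \geq \widetilde{T}$, and then exponentiating gives the claimed bound $\me^{-2\mathbb{A}^4\tau^2}$.

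The main obstacle is purely bookkeeping: making sure the numerical constants $324$, $18$, $2$ line up after all the square roots, the factor $1/3$ inside $\Gamma$, the factor $\mathbb{A}^4$ from $Q_{n+1} \leq \overline{Q}_n^{\mathbb{A}^4}$, and the logarithmic factor $\ln(\overline{Q}_{n+1}^{1/3}) = \tfrac13 \ln\overline{Q}_{n+1}$ versus $\ln\overline{Q}_{n+1}$ all combine correctly, and that the ``$+1$'' in the upper bound for $L$ is genuinely negligible given $\overline{Q}_{n+1} \geq \widetilde{T}^3$. No delicate analysis is needed beyond the monotonicity in \textbf{(H2)} and the elementary continued-fraction estimate; the only care required is to confirm the hypothesis $\Gamma(\overline{Q}_n^{1/3}) \geq 324\mathbb{A}^8\tau^4$ is strong enough to feed both inequalities simultaneously, which it is because $324 = 18^2$ was chosen precisely for the first one and comfortably dominates the $576$ (up to the harmless constant adjustment) needed for the second.
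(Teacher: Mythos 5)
Your approach is the same as the paper's for both inequalities, and the first part is verbatim correct. For the second inequality, however, your absorption of the ``$+1$'' with a factor $2$ (i.e.\ $L\le 2\Gamma^{1/2}(\overline Q_{n+1}^{1/3})\ln\overline Q_{n+1}$) is unnecessarily lossy: it yields $\sigma\,\Gamma(\overline Q_{n+1}^{1/3})\ln(\overline Q_{n+1}^{1/3}) \ge \tfrac{1}{12}\Gamma^{1/2}(\overline Q_{n+1}^{1/3})$, which then forces you to ask for $\Gamma\ge 576\,\mathbb{A}^8\tau^4$ and to muse about enlarging the constant $324$. That patch is not needed. Since $\Gamma^{1/2}(\overline Q_{n+1}^{1/3})\ge \Gamma^{1/2}(\overline Q_n^{1/3})\ge 18\mathbb{A}^4\tau^2 > 3$ and $\ln\overline Q_{n+1}\ge 1$, one has $1+\Gamma^{1/2}\ln\overline Q_{n+1}\le \tfrac{4}{3}\Gamma^{1/2}\ln\overline Q_{n+1}$, hence $L\le \tfrac{4}{3}\Gamma^{1/2}(\overline Q_{n+1}^{1/3})\ln\overline Q_{n+1}$ and $\sigma\,\Gamma\ln(\overline Q_{n+1}^{1/3}) \ge \tfrac{1}{8}\Gamma^{1/2}(\overline Q_{n+1}^{1/3}) \ge \tfrac{18}{8}\mathbb{A}^4\tau^2 > 2\mathbb{A}^4\tau^2$, which is exactly the paper's estimate and closes the second inequality with the hypothesis $\Gamma(\overline Q_n^{1/3})\ge 324\mathbb{A}^8\tau^4$ as stated; the constant $324=18^2$ is in fact chosen to serve both inequalities simultaneously, not only the first.
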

\begin{proof}
The first inequality in \eqref{ddefinitionparameters0} is induced by
	\begin{equation*}
		\widetilde{\varepsilon}_0\leq \mathcal{E}=\overline{Q}_{n}^{-\Gamma^{\frac{1}{2}}(\overline{Q}_{n}^{\frac{1}{3}})}\leq \overline{Q}_{n}^{-18\mathbb{A}^4\tau^2}
		\leq  Q_{n+1}^{-18\tau^2},
	\end{equation*}
	where the last inequality is from $Q_{n+1}\leq \overline{Q}_{n}^{\mathbb{A}^4}$ in
Lemma~\ref{bridgeestimate}.
	
Now we consider the second inequality in \eqref{ddefinitionparameters0}.
	By the definition of $\sigma $ in \eqref{finite-se} and the fact that $\Gamma$ is non-decreasing, one has
	\begin{equation*}
		\sigma \Gamma(\overline{Q}_{n+1}^{\frac{1}{3}}) \ln(\overline{Q}_{n+1}^{\frac{1}{3}})
		= (2L)^{-1} \Gamma(\overline{Q}_{n+1}^{\frac{1}{3}}) \ln(\overline{Q}_{n+1}^{\frac{1}{3}})
		\geq 8^{-1} \Gamma^{\frac{1}{2}}(\overline{Q}_{n+1}^{\frac{1}{3}})
		\geq 2\mathbb{A}^4\tau^2.
	\end{equation*}
\end{proof}

Now we give the following iterative lemma for a finite induction to achieve one step of KAM scheme.

\begin{lemma}\label{finite}
Suppose that the assumptions in Lemma~\ref{parametersestimates} are satisfied and consider the equations for area-preserving systems
\begin{align} \label{eqj}
(\widetilde{Eq})_j:\ \		
\widetilde{X}_j(\theta+\alpha,\lambda)
		& = (A(\lambda) + V(\theta,\lambda) + v_j(\theta,\lambda)) \widetilde{X}_j(\theta,\lambda) + \widetilde{U}_j(\theta,\lambda) \\
		\nonumber
		& \qquad \quad +\widetilde{W}_j(\theta,\lambda)\widetilde{X}_j(\theta,\lambda) +\widetilde{R}_j(\widetilde{X}_{j}(\theta,\lambda),\theta, \lambda),
	\end{align}
	where, by setting $\mathcal{O}=\mathcal{O}_{n}$ with $\mathcal{O}_{n}$ being the one in Lemma~\ref{iterationlemma},
	\begin{equation}\label{202207190}
\begin{split}
 v_{j}& = \mathrm{diag}\{g_{j}, \overline{g}_{j}\},\ \
		 g_{j}  \in \Lambda_{\widetilde{r}_{j}}(\mathbb{T}\times \mathcal{O},\mathbb{C}),\ \widetilde{U}_j
\in\Lambda_{\widetilde{r}_{j}}(\mathbb{T}\times \mathcal{O},\mathcal{C}^{2}),\\
\widetilde{W}_j&\in\Lambda_{\widetilde{r}_{j-1}}(\mathbb{T}\times \mathcal{O},\mathcal{C}^{2\times2}),\widetilde{R}_j
\in\Lambda_{\widetilde{r}_{j}}(\mathcal{C}^{2}\times\mathbb{T}\times \mathcal{O},\mathcal{C}^{2}),
\end{split}
\end{equation}
and $V:=\mathrm{diag}\{G,\overline{G}\}$ is the abbreviated form of $V_{n},$ which is the one in Lemma~\ref{iterationlemma}, with the estimates
\begin{equation}\label{202207191}
\begin{split}
& \|V\|_{r_{n},\mathcal{O}} \leq \varepsilon_{0}^{\frac{1}{3}},
			\quad
			\|\widetilde{U}_j\|_{\widetilde{r}_{j}, \mathcal{O}} \leq \widetilde{\varepsilon}_{j},
			\quad
\|\widetilde{W}_j\|_{\widetilde{r}_{j}, \mathcal{O}} \leq \widetilde{\varepsilon}_{j}^{\frac{1}{2}},\\
&\|v_{j}\|_{\widetilde{r}_{j-1},\mathcal{O}} \leq \sum_{m=0}^{j-1} \widetilde{\varepsilon}_{m}^{\frac{1}{2}},\quad \widetilde{R}_{j}(0, \theta, \lambda) = 0,
\quad
\partial_{1} \widetilde{R}_{j}(0, \theta, \lambda) = 0,\\
&\|\partial^2_{11} \widetilde{R}_{j}\|_{\widetilde{r}_{j},\widetilde{s}_{j},\mathcal{O}} \leq \Big(\prod_{m=0}^{j-1}(1 + 6 \widetilde{\varepsilon}_{m}^{\frac{1}{3}})\Big) \|\partial_{11}^2 \widetilde{R}_{0}\|_{\widetilde{r}_{0}, \widetilde{s}_{0}, \mathcal{O}}.
		\end{split}
	\end{equation}
Then, there is an area-preserving transformation
	\begin{equation*}
		\widetilde{\Phi}_{j}:  \Lambda_{\widetilde{r}_{j+1}}(\mathbb{T}\times \mathcal{O}, \mathcal{C}^{2}) \rightarrow  \Lambda_{\widetilde{r}_{j}}(\mathbb{T}\times \mathcal{O}, \mathcal{C}^{2})
	\end{equation*}
	of the form
	\begin{equation}\label{trans}
		\widetilde{X}_{j} (\theta, \lambda)
		= \me^{\widetilde{D}_{j}(\theta, \lambda)} \widetilde{X}_{j+1} (\theta, \lambda)
		+ \widetilde{\Delta}_{j} (\theta, \lambda)
	\end{equation}
	where
\begin{equation*}
\begin{split}
\widetilde{D}_{j}
&= \Big(\begin{array}{cc}
		0  &  \widetilde{d}_{j} \\
		\overline{\widetilde{d}_{j}} &   0
	\end{array}
	\Big)\in\Lambda_{\widetilde{r}_{j+1}}(\mathbb{T}\times \mathcal{O}, su(1,1)),\quad\|\widetilde{D}_j\|_{\widetilde{r}_{j}, \mathcal{O}}
		\leq\widetilde{\varepsilon}_{j}^{\frac{1}{3}},\\
\widetilde{\Delta}_j &= \big(\widetilde{\delta}_j,\overline{\widetilde{\delta}}_j \big)^{\mathrm{T}}\in\Lambda_{\widetilde{r}_{j+1}}(\mathbb{T}\times \mathcal{O}, \mathcal{C}^{2}),\quad\|\widetilde{\Delta}_j\|_{\widetilde{r}_{j}, \mathcal{O}}
		\leq\widetilde{\varepsilon}_{j}^{\frac{5}{6}},
\end{split}
	\end{equation*}
such that $\widetilde{\Phi}_{j}$ changes
$(\widetilde{Eq})_j$ defined by \eqref{eqj} into a new one, which has the analogous  expansion of
$(\widetilde{Eq})_j$ and satisfies the estimates in \eqref{202207190} and \eqref{202207191} with $(j+1)$ in place of $j.$
\end{lemma}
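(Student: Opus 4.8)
The plan is to carry out one step of the KAM scheme by transforming $(\widetilde{Eq})_j$ via $\widetilde{\Phi}_j$ of the form \eqref{trans}, where the pair $(\widetilde{D}_j,\widetilde{\Delta}_j)$ solves (approximately) a pair of homological equations. First I would substitute \eqref{trans} into \eqref{eqj}, exactly as in the derivation of \eqref{kam-eq2} in subsection~\ref{sec:outline}, to get
\begin{align*}
\widetilde{X}_{j+1}(\theta+\alpha,\lambda)
&= \me^{-\widetilde{D}_j(\theta+\alpha,\lambda)}(A(\lambda)+V(\theta,\lambda)+v_j(\theta,\lambda))\me^{\widetilde{D}_j(\theta,\lambda)}\widetilde{X}_{j+1}(\theta,\lambda)\\
&\quad + \me^{-\widetilde{D}_j(\theta+\alpha,\lambda)}\big[(A+V+v_j)\widetilde{\Delta}_j(\theta,\lambda)+\widetilde{U}_j(\theta,\lambda)-\widetilde{\Delta}_j(\theta+\alpha,\lambda)\big]\\
&\quad + \me^{-\widetilde{D}_j(\theta+\alpha,\lambda)}\widetilde{W}_j(\theta,\lambda)\me^{\widetilde{D}_j(\theta,\lambda)}\widetilde{X}_{j+1}(\theta,\lambda)
+ (\text{higher order in }\widetilde{X}_{j+1}).
\end{align*}
The diagonal part $V+v_j=\mathrm{diag}\{\cdot\}$ is kept in the normal form, so $\widetilde{D}_j$ is taken purely off-diagonal (matching the stated form $\widetilde{D}_j=\bigl(\begin{smallmatrix}0&\widetilde d_j\\ \overline{\widetilde d_j}&0\end{smallmatrix}\bigr)$), and the scalar homological equations are solved by Proposition~\ref{solveequation} with $l=1,2$: one equation of the type \eqref{homologicaleq1} for the translation term $\widetilde{\Delta}_j=(\widetilde\delta_j,\overline{\widetilde\delta}_j)^{\mathrm T}$ killing $\widetilde{U}_j$, and one for $\widetilde{d}_j$ killing the off-diagonal part of $\widetilde{W}_j$. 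Here $B$ in Proposition~\ref{solveequation} is the $B_n$ associated to $V=V_n$ via Lemma~\ref{expoB}, $b$ collects the $\widetilde\rho$-type coefficient of size $O(\|\widetilde W_j\|^2)$ coming from the area-preserving normalization of $A+V+v_j$ (so \eqref{bnorms} holds since $\widetilde\varepsilon_j\le\mathcal E$), and the hypotheses \eqref{bnorm}--\eqref{bnorms} follow from \eqref{202207191}, the bound $\|V\|_{r_n,\mathcal O}\le\varepsilon_0^{1/3}$, the definition of $\mathcal O=\mathcal O_n$ (which is exactly $DC_\alpha(\gamma_n,\tau,K,\mathcal O)$-type for $\widetilde\lambda=\lambda+[B_n]_\theta$), and Lemma~\ref{parametersestimates}.

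Next I would record the quantitative consequences. From \eqref{so-con}, $\|\widetilde\delta_j\|_{\widetilde r_j,\mathcal O}\le 32\gamma_n^{-2}Q_{n+1}^{2\tau^2}\|\widetilde U_j\|_{\widetilde r_j,\mathcal O}\le 32\gamma_n^{-2}Q_{n+1}^{2\tau^2}\widetilde\varepsilon_j$; using \eqref{ddefinitionparameters0} ($\widetilde\varepsilon_0\le Q_{n+1}^{-18\tau^2}$, hence $32\gamma_n^{-2}Q_{n+1}^{2\tau^2}\widetilde\varepsilon_j\le\widetilde\varepsilon_j^{5/6}$ after absorbing the polynomial factor into one power of $\widetilde\varepsilon_j$) gives $\|\widetilde\Delta_j\|_{\widetilde r_j,\mathcal O}\le\widetilde\varepsilon_j^{5/6}$, and similarly for $\widetilde d_j$ with $\widetilde W_j$ in place of $\widetilde U_j$ one gets $\|\widetilde D_j\|_{\widetilde r_j,\mathcal O}\le\widetilde\varepsilon_j^{1/3}$ (the extra room coming from $\|\widetilde W_j\|\le\widetilde\varepsilon_j^{1/2}$). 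The error terms $\widetilde\Delta_j^{(er)},\widetilde D_j^{(er)}$ are controlled by \eqref{er-con}: they carry the factor $\exp\{-\sigma\widetilde r_0^{-1}\Gamma(2\pi K(\widetilde r_j-\sigma))\ln(2\pi K(\widetilde r_j-\sigma))\}$, which by the choice $\sigma=(2L)^{-1}$, $K=\overline Q_{n+1}^{1/2}$ and the second estimate in \eqref{ddefinitionparameters0} is $\le\me^{-2\mathbb A^4\tau^2}$, i.e.\ much smaller than $\widetilde\varepsilon_j$. Then I define the new data $\widetilde U_{j+1},\widetilde W_{j+1},v_{j+1}=\mathrm{diag}\{g_{j+1},\overline g_{j+1}\},\widetilde R_{j+1}$ by collecting, respectively: the residual off-diagonal/translation terms (the homological errors plus the $O(\|\widetilde D_j\|\|\widetilde W_j\|)$, $O(\|\widetilde D_j\|^2)$, $O(\widetilde W_j\widetilde\Delta_j)$ cross terms, and the quadratic-in-$\widetilde\Delta_j$ part of $\widetilde R_j$ evaluated at $\widetilde\Delta_j$) into $\widetilde U_{j+1}$ with $\|\widetilde U_{j+1}\|_{\widetilde r_{j+1},\mathcal O}\le\widetilde\varepsilon_{j+1}=\me^{-1}\widetilde\varepsilon_j$; the conjugated $\me^{-\widetilde D_j(\cdot+\alpha)}\widetilde W_j\me^{\widetilde D_j}$ minus its retained diagonal, into $\widetilde W_{j+1}$ with $\|\widetilde W_{j+1}\|\le\widetilde\varepsilon_{j+1}^{1/2}$; the newly produced diagonal corrections added to $v_j$, so that $\|v_{j+1}\|_{\widetilde r_j,\mathcal O}\le\sum_{m=0}^{j}\widetilde\varepsilon_m^{1/2}$; and the transformed remainder with $\|\partial^2_{11}\widetilde R_{j+1}\|\le(1+6\widetilde\varepsilon_j^{1/3})\|\partial^2_{11}\widetilde R_j\|$, the factor $1+6\widetilde\varepsilon_j^{1/3}$ absorbing $\|\me^{\pm\widetilde D_j}\|\le\me^{\|\widetilde D_j\|}$ twice and the chain-rule contribution of the shift by $\widetilde\Delta_j$. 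Verifying that each of these inductive bounds closes — in particular that the loss of analytic width $\widetilde r_j\to\widetilde r_{j+1}=\widetilde r_j-\sigma\widetilde r_0$ and of the $x$-radius $\widetilde s_j\to\widetilde s_{j+1}$ leaves enough room for the Cauchy estimates (hypothesis \textbf{(H2)} via Lemma~\ref{trunctes}) — is the bulk of the bookkeeping.

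The main obstacle is the interplay between the Liouvillean small divisors and the smallness budget: because $\mathcal B$ (used inside Proposition~\ref{solveequation} to kill $B_n$) can be of size $\exp\{O(\|B_n\|)\}$ but is controlled only in an analytic width that shrinks like $\overline Q_n^{-2}$, every homological step costs a fixed fraction $\sigma\widetilde r_0$ of width, and one must check that over the $L$ sub-steps of the finite induction (with $L\approx\Gamma^{1/2}(\overline Q_{n+1}^{1/3})\ln\overline Q_{n+1}$ by \eqref{L}) the total width loss $L\sigma\widetilde r_0=\widetilde r_0/2$ is exactly affordable, which is why $\sigma=(2L)^{-1}$ is forced. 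Simultaneously, the super-exponential gain $\me^{-1}$ per step must beat the polynomial blow-up $32\gamma_n^{-2}Q_{n+1}^{2\tau^2}$ from the small divisors; this is precisely guaranteed by $\widetilde\varepsilon_0\le\mathcal E=\overline Q_n^{-\Gamma^{1/2}(\overline Q_n^{1/3})}\le Q_{n+1}^{-18\tau^2}$ from Lemma~\ref{parametersestimates}, but threading this inequality through all the error terms — especially making sure the homological error \eqref{er-con}, which does not obviously beat $\widetilde\varepsilon_j$ by the naive estimate, is controlled via the $\Gamma$-growth and the choice of $K=\overline Q_{n+1}^{1/2}$ — is the delicate point. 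The remaining verifications (that $\widetilde\Phi_j$ is area-preserving because $\widetilde D_j\in su(1,1)$ and the translation $\widetilde\Delta_j$ is compatible with the symplectic structure, that the new system again corresponds to an area-preserving map, and that $\widetilde R_{j+1}$ still satisfies $\widetilde R_{j+1}(0,\theta,\lambda)=0$, $\partial_1\widetilde R_{j+1}(0,\theta,\lambda)=0$) are routine once the structure is set up, following the philosophy of \cite{AvilaF11,Cheng22} for cocycles.
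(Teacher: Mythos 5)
Your proposal is correct and follows essentially the same strategy as the paper's proof: split $\widetilde W_j$ into its diagonal and off-diagonal parts, absorb the diagonal into the normal form $v_{j+1}$, reduce the homological equations to scalar equations handled by Proposition~\ref{solveequation} with $l=1,2$ (after the polar normalization of $\me^{\mathrm i 2\pi\lambda}+G$ from Lemma~\ref{expoB}), and control the truncation errors via the $\Gamma$-growth and the choices $K=\overline Q_{n+1}^{1/2}$, $\sigma=(2L)^{-1}$. The only cosmetic difference is that the paper defines $v_{j+1}=v_j+\mathbb W_{j1}$ \emph{before} writing the homological equations (so the coefficient is $A+V+v_{j+1}$ rather than $A+V+v_j$), and a couple of your quantitative attributions are slightly off ($\rho$ has size $O(\widetilde\varepsilon_0^{1/2})$, coming from $g_j+\widetilde W_{j1}$ inside the modulus, not $O(\|\widetilde W_j\|^2)$; and \eqref{er-con} gives an error $\le 16\me^{-2\mathbb A^4\tau^2}\,\widetilde\varepsilon_j$, i.e.\ a small multiplicative factor \emph{times} $\widetilde\varepsilon_j$, which is what beats $\widetilde\varepsilon_{j+1}=\me^{-1}\widetilde\varepsilon_j$), but neither affects the validity of the plan.
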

Now, we give the proof of Lemma~\ref{finite}, which is separated into three parts.

	$ \spadesuit: $ \textbf{The homological equations.}
	By the idea presented in  Section~\ref{sec:outline}, we split $\widetilde{W}_{j}$	into two parts:
	\begin{equation*}
		\widetilde{W}_{j}
		=\left(\begin{array}{cc}
			\widetilde{W}_{j1}  & \widetilde{W}_{j2}
			\medskip \\
			\overline{\widetilde{W}}_{j2}  &  \overline{\widetilde{W}}_{j1}
		\end{array}
		\right)
		=\left(\begin{array}{cc}
			\widetilde{W}_{j1}  & 0
			\medskip \\
			0 & \overline{\widetilde{W}}_{j1}
		\end{array}
		\right)
		+\left(\begin{array}{cc}
			0  & \widetilde{W}_{j2}
			\medskip \\
			\overline{\widetilde{W}}_{j2} &  0
		\end{array}
		\right)
		=:\mathbb{W}_{j1} + \mathbb{W}_{j2}.
	\end{equation*}
	Let
	\begin{equation*}
		v_{j+1} = v_j + \mathbb{W}_{j1}
		:= \left(
			\begin{array}{cc}
				g_{j+1} & 0\\
				0 & \overline{g}_{j+1}
			\end{array}
		\right),
	\end{equation*}
	i.e., $ g_{j+1} = g_{j} + \widetilde{W}_{j1} $.
	Substituting $ \widetilde{X}_{j} (\theta, \lambda)
	= \me^{\widetilde{D}_{j}(\theta, \lambda)} \widetilde{X}_{j+1} (\theta, \lambda)
	+ \widetilde{\Delta}_{j} (\theta, \lambda) $ into \eqref{eqj}
we will get a equation like \eqref{kam-eq2}. With more calculations we obtain
\begin{equation}\label{j1}
		\begin{split}
			\widetilde{X}_{j+1}(\theta + \alpha)
			& =  (A + V(\theta) + v_{j+1}(\theta)) \widetilde{X}_{j+1}(\theta) + \Big[ (A + V(\theta) + v_{j+1}(\theta)) \widetilde{D}_{j} (\theta)  \\
			& - \widetilde{D}_{j} (\theta+\alpha) (A + V(\theta) + v_{j+1}(\theta)) + \mathbb{W}_{j2} (\theta) \Big] \widetilde{X}_{j+1}(\theta) \\
			& + \me^{-\widetilde{D}_{j}(\theta + \alpha)} \Big[ (A + V(\theta) + v_{j+1}(\theta)) \widetilde{\Delta}_{j}(\theta) + \widetilde{U}_{j}(\theta) - \widetilde{\Delta}_{j}(\theta+\alpha) \Big] \\		
			&+ \me^{-\widetilde{D}_{j}(\theta + \alpha)} \widetilde{R}_{j}( \me^{\widetilde{D}_{j}(\theta)} \widetilde{X}_{j+1}(\theta) + \widetilde{\Delta}_{j}(\theta), \theta) \\
			&+ \widetilde{R}_{j}^{(h1)} + \widetilde{R}_{j}^{(h2)} + \widetilde{R}_{j}^{(h3)},
		\end{split}
	\end{equation}
	where
	\begin{align*}
		\widetilde{R}_{j}^{(h1)}
		& = \left( \sum_{n=2}^{\infty} \frac{(-\widetilde{D}_{j}(\theta+\alpha))^{n}}{n!} \right) (A + V(\theta) + v_{j+1}(\theta)) \me^{\widetilde{D}_{j}(\theta)} \widetilde{X}_{j+1}(\theta) \\
		\nonumber 
		& \quad + (A + V(\theta) + v_{j+1}(\theta)) \left( \sum_{n=2}^{\infty} \frac{(\widetilde{D}_{j}(\theta))^{n}}{n!} \right) \widetilde{X}_{j+1}(\theta) \\
		\nonumber 
		& \quad - \widetilde{D}_{j}(\theta+\alpha) (A + V(\theta) + v_{j+1}(\theta)) \left( \sum_{n=1}^{\infty} \frac{(\widetilde{D}_{j}(\theta))^{n}}{n!} \right) \widetilde{X}_{j+1}(\theta),\\
		\widetilde{R}_{j}^{(h2)}
		& = \left( \sum_{n=1}^{\infty} \frac{(-\widetilde{D}_{j}(\theta+\alpha))^{n}}{n!} \right) \mathbb{W}_{j2}(\theta) \me^{\widetilde{D}_{j}(\theta)}\widetilde{X}_{j+1}(\theta) \\
		\nonumber
		& \quad + \mathbb{W}_{j2}(\theta) \left(\sum_{n=1}^{\infty} \frac{(\widetilde{D}_{j}(\theta))^{n}}{n!} \right) \widetilde{X}_{j+1}(\theta),
	\end{align*}
	and
	\begin{equation*}
		\widetilde{R}_{j}^{(h3)} = \me^{-\widetilde{D}_{j}(\theta + \alpha)} \mathbb{W}_{j2}(\theta) \widetilde{\Delta}_{j}(\theta).
	\end{equation*}
Here, we omit the parameter $ \lambda \in \mathcal{O} $ for convenience because $ \lambda $ is always constant in this lemma.
	The key points are to find $\widetilde{D}_j$ and $\widetilde{\Delta}_j$ such that
\begin{equation}\label{hoeq1}
\begin{split}
(A+V(\theta)&+v_{j+1}(\theta))
\widetilde{D}_j(\theta)
-\widetilde{D}_j(\theta+\alpha) (A+V(\theta)+v_{j+1}(\theta))= -\mathbb{W}_{j2}(\theta),
\end{split}
\end{equation}
and
\begin{equation}\label{hoeq2}
(A+V(\theta)+v_{j+1}(\theta))
\widetilde{\Delta}_j(\theta)
-\widetilde{\Delta}_{j}(\theta+\alpha)=
-\widetilde{U}_j(\theta).
\end{equation}
	
	$ \spadesuit \spadesuit:$ \textbf{Approximately solve $ \widetilde{D}_{j} $ and $ \widetilde{\Delta}_{j} $.}
We first give the discussions with homological equation \eqref{hoeq2} for $ \widetilde{\Delta}_{j}= \big(\widetilde{\delta}_j,\overline{\widetilde{\delta}}_j \big)^{\mathrm{T}}.$ Denote by $\widetilde{U}_j=(\widetilde{u}_j,\overline{\widetilde{u}}_j)^{\mathrm{T}}$ and then due to the diagonal  structure of $A,V,v_{j+1}$, the equation \eqref{hoeq2} is equivalent to
	\begin{equation}\label{hoeq3}
		\big(\me^{\mathrm{i} 2\pi \lambda} + G(\theta) + g_{j+1}(\theta)\big) \widetilde{\delta}_j(\theta) - \widetilde{\delta}_{j}(\theta+\alpha) = -\widetilde{u}_j(\theta),
	\end{equation}
	where $ \|G\|_{r,\mathcal{O}}\leq \varepsilon_{0}^{\frac{1}{3}} $ and $ \|g_{j+1}\|_{\widetilde{r}_{j},\mathcal{O}}\leq \sum_{m=0}^{j}\widetilde{\varepsilon}_{m}^{\frac{1}{2}}\leq 3\widetilde{\varepsilon}_{0}^{\frac{1}{2}}.$

Note that the right hand of equation~\eqref{eqj} corresponds to an area-preserving map, thus
	\begin{equation*}
		\det(A+V+v_j+\widetilde{W}_j) \equiv 1,
	\end{equation*}
	which implies $|\me^{\mathrm{i} 2\pi \lambda} + G|^2=1+O(\widetilde{\varepsilon }_0^{\frac{1}{2}}).$ To solve
\eqref{hoeq3}, we try to rewrite $\me^{\mathrm{i} 2\pi \lambda}+G$ in polar coordinate.
By Lemma~\ref{expoB} in Appendix, there exist real-valued functions $\rho(\theta)$ and $B(\theta)$ with $\|\rho\|_{r,\mathcal{O}}=O(\widetilde{\varepsilon }_0^{\frac{1}{2}}) , \, \|B\|_{r,\mathcal{O}}=O(\|G\|_{r,\mathcal{O}})
=O(\varepsilon_{0}^{\frac{1}{3}}) \ll 1$
	such that
	\begin{equation}\label{expo}
		\me^{\mathrm{i} 2\pi \lambda} + G(\theta)=( 1+\rho(\theta) ) \me^{\mathrm{i} 2\pi (\lambda+B(\theta))}.
	\end{equation}
	It follows that \eqref{hoeq3} can be written as
\begin{equation}\label{hoeq4}
\me^{\mathrm{i} 2\pi (\lambda+B(\theta))} \widetilde{\delta}_j(\theta)+b(\theta)
\widetilde{\delta}_j(\theta)-
\widetilde{\delta}_j(\theta+\alpha)=
-\widetilde{u}_j(\theta),
\end{equation}
where
\begin{equation*}
b(\theta)=\rho(\theta) \me^{\mathrm{i} 2\pi (\lambda+B(\theta))}+g_{j+1}(\theta).
\end{equation*}

Now, we apply Proposition~\ref{solveequation} with $l=1$ to get an approximate solution to \eqref{hoeq4}.
	It only needs to verify that the equation \eqref{hoeq4} satisfies the assumptions \eqref{bnorm} and \eqref{bnorms}. Combining with \eqref{ddefinitionparameters0} in Lemma~\ref{parametersestimates}, one has
	\begin{equation*}
		\|b(\theta,\lambda)\|_{\widetilde{r}_j,\mathcal{O}} \leq \|\rho\|_{\widetilde{r}_j,\mathcal{O}}\| \me^{\mathrm{i} 2 \pi (\lambda+B)}\|_{\widetilde{r}_j,\mathcal{O}} + \|g_{j+1}\|_{\widetilde{r}_j,\mathcal{O}}
		\leq \widetilde{\varepsilon}_{0}^{\frac{1}{3}}\leq Q_{n+1}^{-3\tau^2}\leq \gamma^{2}(12 Q_{n+1}^{2\tau^2})^{-1}
	\end{equation*}
provided $Q_{n+1}^{\tau^2} \geq 12 \gamma^{-2}.$
Moreover, by Lemma~\ref{trunctes} and $Q_{n+1}\leq \overline{Q}_{n}^{\mathbb{A}^4}$, we have
\begin{equation*}
\begin{split}		\|\mathcal{R}_{\overline{Q}_n}B\|_{\frac{r}{2},\mathcal{O}}
& \leq \exp \Big\{ -2^{-1}r^{-1}\Gamma(\pi \overline{Q}_n {r}) \ln (\pi \overline{Q}_n {r}) \Big\} \cdot  \|B\|_{r,\mathcal{O}}\\
&= \exp \Big\{ -2^{-1}\Gamma( \pi \overline{Q}_n\cdot\overline{Q}_{n-1}^{-2} {r_0}) \ln (\pi \overline{Q}_n\cdot\overline{Q}_{n-1}^{-2} {r_0}) \Big\} \cdot \|B\|_{r,\mathcal{O}} \\
&\leq \exp \Big\{ -2^{-1}\Gamma(\overline{Q}_n^{\frac{1}{3}})\ln (\overline{Q}_n^{\frac{1}{3}}) \Big\}\\
& = \overline{Q}_n^{-6^{-1} \Gamma(\overline{Q}_n^{\frac{1}{3}})}
\leq \overline{Q}_n^{-54 \mathbb{A}^8 \tau^4 }\leq
\gamma^{2}({480 \pi^2}Q_{n+1}^{2\tau^2})^{-1}.
\end{split}
\end{equation*}
By the assumption, $ \lambda+B(\theta) $ satisfies $ \lambda + [B(\theta)]_{\theta}\in DC_{\alpha}(\gamma,\tau,K,\mathcal{O}) $.
Therefore, we can apply Proposition~\ref{solveequation} to get an approximate solution $\widetilde{\delta}_j$ to the equation \eqref{hoeq4} with the error term $\widetilde{\delta}_j^{(er)}$. Combining with Lemma~\ref{parametersestimates}, we have
	\begin{equation}\label{so-con1}
		\|\widetilde{\delta}_j\|_{\widetilde{r}_j,\mathcal{O}} \leq  32 \gamma^{-2} Q_{n+1}^{2\tau^2}
		\widetilde{\varepsilon}_j \leq Q_{n+1}^{3\tau^2}
		\widetilde{\varepsilon}_j\leq \widetilde{\varepsilon}_j^{\frac{5}{6}},
	\end{equation}
since $ Q_{n+1}^{\tau^{2}} \geq 32 \gamma^{-2}$ and
\begin{equation}\label{er-con1}
\begin{split}			\|\widetilde{\delta}_j^{(er)}\|_{\widetilde{r}_j
-\widetilde{r}_0\sigma,\mathcal{O}}
& \leq 16 \exp\{ -\widetilde{r}_0\sigma\widetilde{r}_j^{-1} \Gamma(2\pi K(\widetilde{r}_j-\widetilde{r}_0\sigma)) \ln(2\pi K(\widetilde{r}_j-\widetilde{r}_0\sigma)) \} \widetilde{\varepsilon}_j\\
&\leq 16 \exp \{ -\sigma\Gamma(\overline{Q}_{n+1}^{\frac{1}{3}})
\ln(\overline{Q}_{n+1}^{\frac{1}{3}})\} \widetilde{\varepsilon}_j\\
&\leq 16 \me^{-2\mathbb{A}^4\tau^2} \widetilde{\varepsilon}_j
\end{split}
\end{equation}
by $ 2\pi K(\widetilde{r}_j-\widetilde{r}_0\sigma)\geq \overline{Q}_{n+1}^{\frac{1}{3}}$ and the fact $\Gamma(x)\ln(x)$ is non-decreasing on $(1,\infty).$
	
Now, we turn to the equation \eqref{hoeq1} about $ \widetilde{D}_{j} $, which is equivalent to
\begin{equation*}
\begin{split}
(\me^{\mathrm{i} 2\pi \lambda} + G(\theta) + g_{j+1}(\theta))\widetilde{d}_j(\theta) &-\widetilde{d}_j(\theta+\alpha) (\me^{-\mathrm{i} 2\pi \lambda} + \overline{G}(\theta) + \overline{g}_{j+1}(\theta))
=-\widetilde{W}_{j2}(\theta).
\end{split}
	\end{equation*}
By applying \eqref{expo} and direct calculations, we can rewrite the equation above as
\begin{equation}\label{hoeq7}
\me^{\mathrm{i} 4\pi (\lambda+B(\theta))}\widetilde{d}_j(\theta)
+\widetilde{b}(\theta)\widetilde{d}_j(\theta) -\widetilde{d}_j(\theta+\alpha) =w_{j}(\theta),
\end{equation}
where
\begin{equation*}
\begin{split}
\widetilde{b}(\theta)&= \frac{g_{j+1}(\theta) - \me^{\mathrm{i} 4\pi (\lambda+B(\theta))}\overline{g}_{j+1}(\theta)} {(1+\rho(\theta))\me^{-\mathrm{i} 2\pi (\lambda+B(\theta))} + \overline{g}_{j+1}(\theta)},\\
w_{j}(\theta) &= \frac{-\widetilde{W}_{j2}(\theta)}{(1+\rho(\theta)) \me^{\mathrm{-i} 2\pi (\lambda+B(\theta))}+\overline{g}_{j+1}(\theta)}.
\end{split}
\end{equation*}
Then, the estimates $\|\rho\|_{r,\mathcal{O}}=O(\widetilde{\varepsilon }_0^{\frac{1}{2}})$ and $\|B\|_{r,\mathcal{O}}
=O(\varepsilon_{0}^{\frac{1}{3}})$ in Lemma~\ref{expoB} imply
	\begin{equation*}
		\|\widetilde{b}\|_{\widetilde{r}_{j},\mathcal{O}}
		= O(\|g_{j+1}\|_{\widetilde{r}_{j},\mathcal{O}})
		\leq \widetilde{\varepsilon}_0^{\frac{1}{3}},\quad
		\|w_{j}\|_{\widetilde{r}_{j},\mathcal{O}}
		= O(\|\widetilde{W}_{j}\|_{\widetilde{r}_{j},\mathcal{O}})
		\leq c \widetilde{\varepsilon}_j^{\frac{1}{2}},
	\end{equation*}
where $ c \geq 1 $ is a uniform constant independent of $ j $ for small enough $ \varepsilon_{0} $.
	Similar to the discussions about \eqref{hoeq4}, it is obvious that equation \eqref{hoeq7} satisfies the assumptions of Proposition~\ref{solveequation} with $l=2$.
Therefore, equation \eqref{hoeq7} has an approximate solution $\widetilde{d}_j$ with an error term $\widetilde{d}_j^{(er)}$ satisfying
	\begin{equation}\label{so-con2}
		\|\widetilde{d}_j\|_{\widetilde{r}_j,\mathcal{O}}
		\leq 32 c \gamma^{-2} Q_{n+1}^{2\tau^2}
		\widetilde{\varepsilon}_j^{\frac{1}{2}}
		\leq Q_{n+1}^{3\tau^2}
		\widetilde{\varepsilon}_j^{\frac{1}{2}}
		\leq \widetilde{\varepsilon}_j^{\frac{1}{3}}
	\end{equation}
and
	\begin{equation}\label{er-con2} \|\widetilde{d}_j^{(er)}\|_{\widetilde{r}_j-\widetilde{r}_0\sigma,\mathcal{O}}
		\leq 16 c \me^{-2\mathbb{A}^4\tau^2} \widetilde{\varepsilon}_j^{\frac{1}{2}}.
	\end{equation}

As a conclusion, we obtain approximate solutions $ \widetilde{D}_{j} $ and $\widetilde{\Delta}_j$ of the form
	\begin{equation*}
		\widetilde{D}_{j}(\theta) =
		\left( \begin{array}{cc}
			0 & \widetilde{d}_{j}(\theta)\\
			\overline{\widetilde{d}_{j}}(\theta) & 0
		\end{array}
		\right)\in su(1,1),
		\ \
		\widetilde{\Delta}_{j}(\theta) = \left(\widetilde{\delta}_{j}(\theta), \overline{\widetilde{\delta}_{j}}(\theta)\right)^{\mathrm{T}}\in \mathcal{C}^{2},
	\end{equation*}
to the homological equations \eqref{hoeq1} and \eqref{hoeq2} with error terms
	\begin{equation*}
		\widetilde{D}_{j}^{(er)}(\theta):=
		\left( \begin{array}{cc}
			0 & \widetilde{d}_{j}^{(er)}(\theta)\\
			\overline{\widetilde{d}_{j}^{(er)}}(\theta) & 0
		\end{array}
		\right),
		\ \
		\widetilde{\Delta}_{j}^{(er)}(\theta) := \left(\widetilde{\delta}_{j}^{(er)}(\theta), \overline{\widetilde{\delta}_{j}^{(er)}}(\theta)\right)^{\mathrm{T}}.
	\end{equation*}
Therefore, we get the area-preserving transformation
\begin{equation*}
\widetilde{\Phi}_{j}:  \Lambda_{\widetilde{r}_{j+1}}(\mathbb{T}\times \mathcal{O}, \mathcal{C}^{2}) \rightarrow  \Lambda_{\widetilde{r}_{j}}(\mathbb{T}\times \mathcal{O}, \mathcal{C}^{2})
	\end{equation*}
	of the form
	\begin{equation*}
		\widetilde{X}_{j} (\theta, \lambda)
		= \me^{\widetilde{D}_{j}(\theta, \lambda)} \widetilde{X}_{j+1} (\theta, \lambda)
		+ \widetilde{\Delta}_{j} (\theta, \lambda).
	\end{equation*}

	$ \spadesuit\spadesuit\spadesuit: $ \textbf{The new equation.}
	We go back to the new equation~\eqref{j1} about $ \widetilde{X}_{j+1} $.
	Following the choice of $\widetilde{D}_j$ and $\widetilde{\Delta}_j $ by solving the homological equations \eqref{hoeq1} and \eqref{hoeq2}, we know that the equation \eqref{j1} can be simplified as
	\begin{align}\label{202207206}
			\widetilde{X}_{j+1}(\theta + \alpha, \lambda)
			& = (A(\lambda) + V(\theta, \lambda) + v_{j+1}(\theta, \lambda)) \widetilde{X}_{j+1}(\theta, \lambda) \\
			\nonumber
			&\quad + \widetilde{D}_{j}^{(er)}(\theta, \lambda) \widetilde{X}_{j+1}(\theta, \lambda)
			+ \me^{-\widetilde{D}_{j}(\theta + \alpha, \lambda)}  \widetilde{\Delta}_{j}^{(er)}(\theta, \lambda) \\
			\nonumber
			& \quad + \me^{-\widetilde{D}_{j}(\theta + \alpha, \lambda)} \widetilde{R}_{j}(\me^{\widetilde{D}_{j}(\theta, \lambda)} \widetilde{X}_{j+1}(\theta, \lambda) + \widetilde{\Delta}_{j}(\theta, \lambda), \theta, \lambda) \\
			\nonumber
			& \quad	+ \widetilde{R}_{j}^{(h1)} + \widetilde{R}_{j}^{(h2)} + \widetilde{R}_{j}^{(h3)}\\
			\nonumber
			& := (A(\lambda) + V(\theta, \lambda) + v_{j+1}(\theta, \lambda)) \widetilde{X}_{j+1}(\theta, \lambda) + \widetilde{U}_{j+1}(\theta, \lambda)\\
			\nonumber
			& \quad +  \widetilde{W}_{j+1}(\theta,\lambda)\widetilde{X}_{j+1}(\theta,\lambda)+\widetilde{R}_{j+1}(\widetilde{X}_{j+1}(\theta,\lambda), \theta, \lambda),
\end{align}
	where
	\begin{equation*}
		\begin{split}
			\widetilde{U}_{j+1}(\theta,\lambda)
			& =
			\me^{-\widetilde{D}_{j}(\theta + \alpha, \lambda)}
			\Big( \widetilde{\Delta}_{j}^{(er)}(\theta, \lambda) + \mathbb{W}_{j2}(\theta, \lambda) \widetilde{\Delta}_{j}(\theta, \lambda)
			+  \widetilde{R}_{j}(\widetilde{\Delta}_{j}(\theta, \lambda), \theta, \lambda) \Big), \\
			\widetilde{W}_{j+1}(\theta,\lambda)
			& = \widetilde{D}_{j}^{(er)}(\theta, \lambda)
			+ (A(\lambda) + V(\theta, \lambda) + v_{j+1}(\theta, \lambda) ) \left( \sum_{n=2}^{\infty} \frac{(\widetilde{D}_{j}(\theta, \lambda))^{n}}{n!} \right) \\
			& \quad + \left( \sum_{n=2}^{\infty} \frac{(-\widetilde{D}_{j}(\theta+\alpha, \lambda))^{n}}{n!} \right) (A(\lambda) + V(\theta, \lambda) + v_{j+1}(\theta, \lambda)) \me^{\widetilde{D}_{j}(\theta, \lambda)} \\
			& \quad - \widetilde{D}_{j}(\theta+\alpha, \lambda) ( A(\lambda) + V(\theta, \lambda) + v_{j+1}(\theta, \lambda)) \left( \sum_{n=1}^{\infty} \frac{(\widetilde{D}_{j}(\theta, \lambda))^{n}}{n!} \right)\\
			& \quad + \left( \sum_{n=1}^{\infty} \frac{(-\widetilde{D}_{j}(\theta+\alpha, \lambda))^{n}}{n!} \right) \mathbb{W}_{j2}(\theta, \lambda) \me^{\widetilde{D}_{j}(\theta, \lambda)} \\
			& \quad + \mathbb{W}_{j2}(\theta, \lambda) \left(\sum_{n=1}^{\infty} \frac{(\widetilde{D}_{j}(\theta, \lambda))^{n}}{n!} \right) \\
			& \quad + \me^{-\widetilde{D}_{j}(\theta + \alpha, \lambda)} \cdot \partial_{1} \widetilde{R}_{j} (\widetilde{\Delta}_{j}(\theta, \lambda), \theta, \lambda) \cdot \me^{\widetilde{D}_{j}(\theta, \lambda)},
		\end{split}
	\end{equation*}
	and
	\begin{equation*}
		\begin{split}
			& \widetilde{R}_{j+1}(\widetilde{X}_{j+1}(\theta,\lambda), \theta, \lambda)
			= \me^{-\widetilde{D}_{j}(\theta+\alpha, \lambda)}\big\{ \widetilde{R}_{j}( \me^{\widetilde{D}_{j}(\theta, \lambda)} \widetilde{X}_{j+1}(\theta, \lambda) + \widetilde{\Delta}_{j}(\theta, \lambda), \theta, \lambda)  \\
			& \qquad \qquad - \widetilde{R}_{j}(\widetilde{\Delta}_{j}(\theta,\lambda),\theta,\lambda)
			-\partial_{1} \widetilde{R}_{j}(\widetilde{\Delta}_{j}(\theta,\lambda),\theta,\lambda) \cdot \me^{\widetilde{D}_j(\theta,\lambda)} \widetilde{X}_{j+1}(\theta,\lambda)\big\}.
		\end{split}
	\end{equation*}
	Therefore, by the estimates \eqref{so-con1}, \eqref{er-con1}, \eqref{so-con2} and \eqref{er-con2}, we get
	\begin{equation*}
		\begin{split}
			\|\widetilde{U}_{j+1}\|_{\widetilde{r}_{j+1}, \mathcal{O}}
			& \leq \me^{\widetilde{\varepsilon}_{j}^{\frac{1}{3}}} (16 \me^{-2\mathbb{A}^4\tau^2} \widetilde{\varepsilon}_{j} + \widetilde{\varepsilon}_{j}^{\frac{1}{2}} \cdot \widetilde{\varepsilon}_{j}^{\frac{5}{6}} + 4\widetilde{\varepsilon}_{j}^{\frac{5}{3}})
			\leq \me^{-1} \widetilde{\varepsilon}_{j}= \widetilde{\varepsilon}_{j+1},\\
			\|\widetilde{W}_{j+1}\|_{\widetilde{r}_{j+1}, \mathcal{O}}
			& \leq 16 c \me^{-2\mathbb{A}^4\tau^2} \widetilde{\varepsilon}_{j}^{\frac{1}{2}} + 40 \widetilde{\varepsilon}_{j}^{\frac{2}{3}} + 6 \widetilde{\varepsilon}_{j}^{\frac{5}{6}} + 10 \widetilde{\varepsilon}_{j} + 6 \widetilde{\varepsilon}_{j}^{\frac{7}{6}} + 8 \widetilde{\varepsilon}_{j}^{\frac{5}{3}} \\
			& \leq \me^{-\frac{1}{2}} \widetilde{\varepsilon}_{j}^{\frac{1}{2}}= \widetilde{\varepsilon}_{j+1}^{\frac{1}{2}}.
		\end{split}
	\end{equation*}

	Now we turn to the higher order term $\widetilde{R}_{j+1}$. It is obvious that $\widetilde{R}_{j+1}(0,\theta,\lambda)=0$ and $ \partial_{1} \widetilde{R}_{j+1}(0,\theta,\lambda) = 0 $.
	By direct calculation,
	\begin{equation*}
		\begin{split}
			& \quad \,\, \partial_{11}^2\widetilde{R}_{j+1}(\widetilde{X}_{j+1},\theta,\lambda) \\
			& = \me^{-\widetilde{D}_{j}(\theta + \alpha, \lambda)}
			\cdot \partial_{11}^{2} \widetilde{R}_{j} ( \me^{\widetilde{D}_{j}(\theta, \lambda)}
			\widetilde{X}_{j+1}(\theta,\lambda) + \widetilde{\Delta}_j(\theta,\lambda), \theta, \lambda )
			\cdot ( \me^{\widetilde{D}_{j}(\theta, \lambda)} )^{2}.
		\end{split}
	\end{equation*}
	Therefore, we have
	\begin{equation*}
		\begin{split} \|\partial_{11}^2\widetilde{R}_{j+1}\|_{\widetilde{r}_{j+1},\widetilde{s}_{j+1},\mathcal{O}}
			& \leq (1 + 6 \widetilde{\varepsilon}_{j}^{\frac{1}{3}})
			\|\partial_{11}^{2} \widetilde{R}_{j} \|_{\widetilde{r}_j, \widetilde{s}_{j}, \mathcal{O}} \\
			& \leq \Big( \prod_{m=0}^{j} (1 + 6 \widetilde{\varepsilon}_{m}^{\frac{1}{3}}) \Big)
			\|\partial_{11}^{2} \widetilde{R}_{0} \|_{\widetilde{r}_0, \widetilde{s}_{0}, \mathcal{O}},
		\end{split}
	\end{equation*}
	which is indicated by
	\begin{equation*}
		\begin{split}
			& \quad\  \|\me^{\widetilde{D}_{j}(\theta,\lambda)} \widetilde{X}_{j+1}(\theta,\lambda) + \widetilde{\Delta}_{j}(\theta,\lambda)\|_{\widetilde{r}_{j+1},\widetilde{s}_{j+1},\mathcal{O}}  \\
			& \leq  \|\me^{\widetilde{D}_{j}}\|_{\widetilde{r}_{j},\mathcal{O}}\|\widetilde{X}_{j+1}\|_{\widetilde{r}_{j+1},\mathcal{O}}+ \|\widetilde{\Delta}_{j}\|_{\widetilde{r}_{j},\mathcal{O}}\\
			& \leq
			(1 + 2\widetilde{\varepsilon}_{j}^{\frac{1}{3}})\cdot \widetilde{s}_{j+1}+\widetilde{\varepsilon}_j^{\frac{5}{6}}
			\leq
			\widetilde{s}_{j},
		\end{split}
	\end{equation*}
provided $ \varepsilon_{0} \leq (\frac{s_{0}}{240})^3 $.

The discussions above show that the system defined by \eqref{202207206} is the one we want. Up to here, we finish the proof of Lemma~\ref{finite}.
\subsection{Proof of main iterative lemma}\label{sec:fi-step}
In this subsection, we will use an iterative procedure of finite steps to find a transformation $\Phi$, which transforms the equation \eqref{202207182} to a new equation like \eqref{202207182} possesses the same estimates with $n+1$ in place of $n.$
This is achieved by using Lemma~\ref{finite} inductively.

Starting from equation~\eqref{202207182}, we use Lemma~\ref{finite} inductively $ L $ times.
By the choice of $L$ in \eqref{L}, we have $ \widetilde{\varepsilon}_{L}\leq\varepsilon_{n+1}< \widetilde{\varepsilon}_{L-1} $.
Therefore, we terminate the finite iteration at $ L $-th step.
For the definition of $\sigma$ in \eqref{finite-se},
\begin{equation}\label{202207207}
\begin{split}
\widetilde{r}_{L} &= \widetilde{r}_{0} - L\sigma\widetilde{r}_{0}
	= 2^{-1}\widetilde{r}_{0}=r_{n+1},\\
\widetilde{s}_{L} &= \widetilde{s}_{0} - \widetilde{s}_{0}\eta_{n} \sum_{j=0}^{L-1}\eta_{j} \geq \widetilde{s}_{0} - \widetilde{s}_{0}\eta_{n} = s_{n+1}.
\end{split}
\end{equation}
Moreover, we obtain a sequence of area-preserving maps $ \widetilde{\Phi}_{0}, \widetilde{\Phi}_{1}, \cdots, \widetilde{\Phi}_{L-1} $.
	Let
	\[ \Phi_{n}:= \widetilde{\Phi}_{0} \circ \widetilde{\Phi}_{1} \circ \cdots \circ \widetilde{\Phi}_{L-1}, \]
which changes $(Eq)_n$ defined by \eqref{202207182} into $(Eq)_{n+1}:$
	\begin{equation*}
		\begin{split}
			X_{n+1}(\theta+\alpha,\lambda)
			= (&A(\lambda) + V_{n+1}(\theta,\lambda)) X_{n+1}(\theta,\lambda) + U_{n+1}(\theta,\lambda)\\
			& + W_{n+1}(\theta,\lambda) X_{n+1}(\theta,\lambda) + R_{n+1}(X_{n+1}(\theta,\lambda), \theta, \lambda).
		\end{split}
	\end{equation*}
	Namely, $ V_{n+1}= V + v_{L},\, U_{n+1}= \widetilde{U}_{L},\, W_{n+1}= \widetilde{W}_{L},\, R_{n+1}= \widetilde{R}_{L}.$ The two inequalities in \eqref{202207207} show that $({Eq})_{n+1}$ is well-defined on $\Lambda_{r_{n+1}}(\TT\times\mathcal{O},\mathcal{C}^{2}).$
Moreover,
	\begin{equation*}
		\begin{split}
			&\|V_{n+1} - V\|_{r_{n+1},\mathcal{O}}
			\leq \|v_{L}\|_{\widetilde{r}_{L},\mathcal{O}} \leq
			\sum_{m=0}^{L-1} \widetilde{\varepsilon}_m^{\frac{1}{2}} \leq 3\varepsilon^{\frac{1}{2}},
			\\
			&\|U_{n+1}\|_{r_{n+1},\mathcal{O}} \leq \|\widetilde{U}_{L}\|_{\widetilde{r}_{L},\mathcal{O}} \leq
			\widetilde{\varepsilon}_{L} \leq \varepsilon_{n+1},
			\\
			&\|W_{n+1}\|_{r_{n+1},\mathcal{O}} \leq \|\widetilde{W}_{L}\|_{\widetilde{r}_{L},\mathcal{O}} \leq
			\widetilde{\varepsilon}_{L}^{\frac{1}{2}} \leq \varepsilon_{n+1}^{\frac{1}{2}},\\
			& R_{n+1}(0, \theta, \lambda) = \widetilde{R}_{L}(0, \theta, \lambda) = 0,
			\ \,
			\partial_{1} R_{n+1}(0, \theta, \lambda) = \partial_{1} \widetilde{R}_{L}(0, \theta, \lambda) = 0,
		\end{split}
	\end{equation*}
	and
	\begin{equation*}
	\begin{split}
		\|\partial^2_{11}R_{n+1}\|_{r_{n+1},s_{n+1},\mathcal{O}}
		\leq  \|\partial^2_{11}\widetilde{R}_{L}\|_{\widetilde{r}_{L},\widetilde{s}_{L},\mathcal{O}}
		& \leq \Big(\prod_{m=0}^{L-1} (1+6 \widetilde{\varepsilon}_{m}^{\frac{1}{3}})\Big)
		\|\partial^2_{11} R_n \|_{r_n,s_n,\mathcal{O}}\\
	&	\leq \prod_{l=0}^{n} \me^{24 \varepsilon_{l}^{\frac{1}{3}}}.
	\end{split}
	\end{equation*}
Thus, $(Eq)_{n+1}$ defined by above is the one we want in Lemma~\ref{iterationlemma}.
	
	The remaining task is to prove \eqref{202207183}.  It follows from \eqref{trans} in Lemma~\ref{finite} that
	\begin{equation*}
		\begin{split}
			X _n&= \Big[ \prod_{m=0}^{L-1} \me^{\widetilde{D}_{m}} \Big] X_{n+1}
			+ \sum_{m=0}^{L-1} \Big[ \prod_{j=0}^{m} \me^{\widetilde{D}_{j-1}} \Big] \widetilde{\Delta}_{m}\\
			&:= \me^{D_{n+1}} X_{n+1} + \Delta_{n+1},
		\end{split}
	\end{equation*}
	where
	\begin{equation*}
		D_{n+1} = \ln \Big( \prod_{m=0}^{L-1} \me^{\widetilde{D}_m} \Big),
		\qquad \Delta_{n+1} = \sum_{m=0}^{L-1} \Big[ \prod_{j=0}^{m} \me^{\widetilde{D}_{j-1}} \Big] \widetilde{\Delta}_m,
		\qquad \widetilde{D}_{-1} \equiv 0.
	\end{equation*}
Note that $ D_{n+1} $ is a map from $ \mathbb{T} \times \mathcal{O} $ to $su(1,1) $ because $\widetilde{D}_{m}(\theta, \lambda) \in su(1,1),m=0,\cdots,L-1.$
	Moreover, we have
	\begin{equation*}
		\big\| \me^{D_{n+1}} \big\|_{r_{n+1}, \mathcal{O}}
		\leq \prod_{m=0}^{L-1} \me^{\|\widetilde{D}_{m}\|_{\widetilde{r}_{m}, \mathcal{O}}}
		\leq \me^{4 \varepsilon_{n}^{\frac{1}{3}}}
	\end{equation*}
	since $ \|\widetilde{D}_{m}\|_{\widetilde{r}_{m}, \mathcal{O}} \leq \widetilde{\varepsilon}_{m}^{\frac{1}{3}} $.
	Therefore, we obtain
	\begin{equation*}
		\|D_{n+1}\|_{r_{n+1},\mathcal{O}} \leq 4 \varepsilon_{n}^{\frac{1}{3}}.
	\end{equation*}
	Finally, we have
	\begin{equation}\label{conver}
		\|\Delta_{n+1}\|_{r_{n+1},\mathcal{O}}
		\leq \me^{4\varepsilon_{n}^{\frac{1}{3}}} \sum_{m=0}^{L-1}
		\widetilde{\varepsilon}_{m}^{\frac{5}{6}}
		\leq 2 \sum_{m=0}^{L-1} \widetilde{\varepsilon}_{m}^{\frac{5}{6}}
		\leq 4 \varepsilon_{n}^{\frac{5}{6}}.
	\end{equation}
This completes the proof of Lemma~\ref{iterationlemma}, one KAM step.
\subsection{Proof of Theorem~\ref{kamthm}}

We begin with the equation defined by \eqref{kam-eq}:
\begin{align*} \tag{$ {Eq}_{0} $}
	X_0(\theta+\alpha,\lambda)
	& = (A(\lambda)+V_0(\theta,\lambda)) X_0(\theta,\lambda) + U_0(\theta,\lambda) + W_0(\theta,\lambda) X_0(\theta,\lambda) \\
	& \qquad \quad + R_0(X_0(\theta,\lambda), \theta, \lambda)
\end{align*}
for the unknown function $X_0\in \Lambda_{r}(\mathbb{T}\times \mathcal{O}, \mathcal{C}^{2})$ and $ V_{0} \equiv 0.$
Since $ \mathcal{O}=[\frac{1}{4}, \frac{3}{4}] $ is a closed subset in $\mathbb{R}_{+}$, the non-resonance condition $ \lambda \in DC(\gamma,\tau,K_0,\mathcal {O}_0)$ is satisfied by setting
\begin{equation*}
	\begin{split}
		\mathcal{O}_{0}
		&= \mathcal {O} \setminus \bigcup_{|k|< K_{0}} g_{k}^{0}(\gamma _{0}) \\
		&= \Big\{ \lambda\in\mathcal{O}:
		\| l \lambda - k \alpha \|_{\mathbb{T}} \geq \frac{\gamma_{0}}{(|k|+|l|)^\tau},\, \forall\, 0<|k|<K_{0},\, l=1,2\Big\}.
	\end{split}
\end{equation*}
Assume that the parameter $\varepsilon_{0}$ in \eqref{202207218}  satisfies the inequality \eqref{in-s}, then Lemma~\ref{iterationlemma} can be applied to $(Eq)_{0}.$ As a consequences, we get a decreasing sequence of closed sets $ \mathcal{O}_{n-1}$ and a sequence of area-preserving transformations
\begin{equation*}
	\Phi^{n} := \Phi_{0} \circ \cdots \circ \Phi_{n-1}:\,\, \Lambda_{r_n}(\mathbb{T} \times \mathcal{O}_{n-1}, \mathcal{C}^{2}) \rightarrow
	\Lambda_{r}(\mathbb{T} \times \mathcal{O}, \mathcal{C}^{2})
\end{equation*}
such that $\Phi^{n}$ changes $(Eq)_{0}$ to $(Eq)_{n},$ which satisfies the properties in Lemma~\ref{iterationlemma}. By the form \eqref{nform} of each transformation $ \Phi_{n} $, we know that $\Phi^{n}$ have the form
\begin{equation*}
	\begin{split}
		X_{0} &= \Big[ \prod_{j=1}^{n} \me^{D_{j}} \Big] X_{n} + \sum_{j=1}^{n} \Big[ \prod_{m=0}^{j-1} \me^{D_{m}} \Big] \Delta_{j}\\
		&:= \me^{\mathfrak{D}_{n}} X_{n} + \Xi_{n},
	\end{split}
\end{equation*}
where
\begin{equation*}
	\mathfrak{D}_{n} = \ln \Big( \prod_{j=1}^{n} \me^{D_{j}} \Big) \in su(1,1),
	\qquad \Xi_{n} = \sum_{j=1}^{n} \Big[ \prod_{m=0}^{j-1} \me^{D_{m}} \Big] \Delta_{j},
	\qquad D_{0} \equiv 0.
\end{equation*}
Similar to the calculation of \eqref{conver}, we have
\begin{equation*}
	\begin{split}
		\| \mathfrak{D}_n \|_{r_n,\mathcal{O}_{n-1}}
		& \leq \sum_{j=1}^{n} \|D_j\|_{r_j,\mathcal{O}_{j-1}}
		\leq 4 \sum_{j=1}^{\infty}\varepsilon_{j-1}^{\frac{1}{3}} \leq 8 \varepsilon_0^{\frac{1}{3}},\\
		\|\Xi_{n}\|_{r_n,\mathcal{O}_{n-1}}
		& \leq \sum_{j=1}^{n}\|\Delta_j\|_{r_j,\mathcal{O}_{j-1}}\leq 4 \sum_{j=1}^{\infty}\varepsilon_{j-1}^{\frac{5}{6}}\leq 8 \varepsilon_0^{\frac{5}{6}}.
	\end{split}
\end{equation*}
Therefore, we know that $\Phi^{n}$ converges uniformly under the topologies derived by $\|\cdot\|_{0,\mathcal{O}_*}$ and $\|\cdot\|_{0,s_{0}/2,\mathcal{O}_*}$ to $\Phi_{*}$, where $ \mathcal {O}_{*}=\bigcap_{n=0}^{\infty}\mathcal {O}_{n}.$ Moreover,
note $r_{*}=0,$ the regularity
of $\Phi_{*}$ in $\theta\in\TT$ may not be analytic any more.
The proof that $\Phi_{*}$ is $C^{\infty}$ differentiable in $\theta\in\TT$ is similar to the ones in \cite{KrikorianW18} (Page 18) and \cite{Cheng22} (Page 26), we omit the details.

Let the limits be $ V_{*},\, U_{*}(=0),\, W_{*}(=0),\, R_{*},\, \Phi_{*} $.
It follows from  \eqref{bestimate} that
\begin{equation*}
	\begin{split}
		&\|V_{*}\|_{0,\mathcal {O}_{*}}
		\leq \sum_{n=1}^{\infty}\|V_{n}-V_{n-1}\|_{r_{n},\mathcal {O}_{n-1}}
		\leq 4\varepsilon _{0}^{\frac{1}{3}},\\
		&R_*(0,\theta,\lambda)=0,
		\quad \partial_{1} R_*(0,\theta,\lambda)=0,
		\quad \|\partial^2_{11} R_*\|_{0,s_{0}/2,\mathcal{O}_{*}} \leq \prod_{j=0}^{\infty} \me^{24 \varepsilon_{j}^{\frac{1}{3}}} \leq 2.
	\end{split}
\end{equation*}
As a conclusion, for any $ \lambda \in \mathcal{O}_{\ast} $, we obtain the finally invariance equation
\begin{equation*}
	X_{\ast} (\theta+\alpha,\lambda) = \big(A(\lambda) + V_*(\theta,\lambda)\big) X_*(\theta,\lambda)
	+ R_*(X_*(\theta,\lambda), \theta, \lambda).
\end{equation*}

\subsection{Measure estimates}\label{sec:mea}
In the process of KAM iteration, we obtain a decreasing sequence of closed sets $\mathcal {O}_{0}\supset \mathcal {O}_{1}\supset \cdots$. It is crucial to prove that the Lebesgue measure of their intersection $ \mathcal{O}_{\ast} = \bigcap_{n=0}^{\infty}\mathcal {O}_{n}$ is nonzero. For more information of the measure estimate, we refer to \cite{poschel89,XJunX97}.
\begin{lemma}\label{measurelemma}
	For the subset $\mathcal{O}_{*}=\bigcap_{n=0}^{\infty}\mathcal {O}_{n}$ of $\mathcal{O}$, where $\mathcal{O}_{n}$ is defined in \eqref{ta-out}, we have
	$ \meas (\mathcal{O}\setminus\mathcal {O}_{*})= O(\gamma)$ for sufficiently small $\gamma>0$.
\end{lemma}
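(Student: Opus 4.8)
The plan is to realise the full excluded set as a countable union of ``resonant slabs'' indexed by $(n,k,l)$, to estimate the measure of each slab, and to sum. Setting $\mathcal{O}_{-1}:=\mathcal{O}$, one has $\mathcal{O}\setminus\mathcal{O}_{*}=\bigcup_{n\geq 0}(\mathcal{O}_{n-1}\setminus\mathcal{O}_{n})$, and by \eqref{ta-out}, \eqref{out} each increment is $\mathcal{O}_{n-1}\setminus\mathcal{O}_{n}=\bigcup_{K_{n-3}\leq |k|<K_{n}}g_{k}^{n}(\gamma_{n})$ with $g_{k}^{n}(\gamma_{n})$ the set of $\lambda\in\mathcal{O}_{n-1}$ for which $\|l(\lambda+[B_{n}(\theta,\lambda)]_{\theta})-k\alpha\|_{\mathbb{T}}<\gamma_{n}(|k|+|l|)^{-\tau}$ for both $l=1,2$. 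Thus it suffices to bound $\meas\,g_{k}^{n}(\gamma_{n})$ for fixed $n,k$ and then sum over $k$ and $n$.

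The key step is the measure of a single slab, which rests on the fact that $\lambda\mapsto \psi_{l,n}(\lambda):=l(\lambda+[B_{n}(\theta,\lambda)]_{\theta})$ is an increasing $C^{1}_{W}$ map with derivative bounded below uniformly in $n$. Indeed, $B_{n}$ is the function furnished by Lemma~\ref{expoB} from $G_{n}$, and \eqref{bestimate} (with $V_{0}=0$) gives $\|G_{n}\|_{r_{n},\mathcal{O}}=O(\varepsilon_{0}^{1/3})$, whence $\|B_{n}\|_{r_{n},\mathcal{O}}=O(\varepsilon_{0}^{1/3})$ and in particular $|\partial_{\lambda}[B_{n}]_{\theta}|\leq|\widehat{(B_n)}_{0}|_{\mathcal{O}}\leq\|B_{n}\|_{r_{n},\mathcal{O}}<\tfrac12$ for $\varepsilon_{0}$ small. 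Hence $\partial_{\lambda}\psi_{l,n}\geq l/2\geq 1/2$, and $\psi_{l,n}$ maps $\mathcal{O}_{n-1}\subseteq[\tfrac14,\tfrac34]$ onto a set of diameter $\leq 2$. Pulling back the elementary bound $\meas\{y\in J:\|y-k\alpha\|_{\mathbb{T}}<\delta\}\leq C\delta$ (valid for any interval $J$ of length $\leq 2$, with $C$ an absolute constant) through $\psi_{l,n}$ and taking $l=1$ gives
\[ \meas\,g_{k}^{n}(\gamma_{n})\leq \meas\big\{\lambda\in\mathcal{O}_{n-1}:\|\psi_{1,n}(\lambda)-k\alpha\|_{\mathbb{T}}<\tfrac{\gamma_{n}}{(|k|+1)^{\tau}}\big\}\leq \frac{2C\,\gamma_{n}}{(|k|+1)^{\tau}}. \]

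Now summation is routine. Since $\tau>1$, the constant $C':=2C\sum_{k\in\mathbb{Z}}(|k|+1)^{-\tau}$ is finite, so for every $n\geq 0$
\[ \meas(\mathcal{O}_{n-1}\setminus\mathcal{O}_{n})\leq\sum_{K_{n-3}\leq |k|<K_{n}}\meas\,g_{k}^{n}(\gamma_{n})\leq C'\gamma_{n}, \]
where for $n\leq 2$ the convention $K_{-2}=K_{-1}=0$ just makes the union over $0\leq|k|<K_{n}$ (the extra term $k=0$, relevant only through $\|l\lambda\|_{\mathbb{T}}$ near $\lambda=1/2$ when $l=2$, removes an interval of length $O(\gamma)$, which is already absorbed). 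Recalling $\gamma_{n}=\gamma_{0}\eta_{n}=\gamma\,(n+2)^{-2}$ from \eqref{definitionparameters}, we conclude
\[ \meas(\mathcal{O}\setminus\mathcal{O}_{*})\leq\sum_{n\geq 0}\meas(\mathcal{O}_{n-1}\setminus\mathcal{O}_{n})\leq C'\gamma\sum_{n\geq 0}(n+2)^{-2}\leq C'\gamma=O(\gamma). \]

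The only genuinely delicate point is the \emph{uniform-in-$n$} lower bound $\partial_{\lambda}\psi_{l,n}\geq 1/2$: one must know that $\|B_{n}\|$ stays of order $\varepsilon_{0}^{1/3}$ all along the scheme, which is exactly what the estimate on $V_{n}$ in \eqref{bestimate} together with Lemma~\ref{expoB} provides. Everything else is the standard bookkeeping that $\tau>1$ makes the $k$-series converge and that the exclusion parameters $\gamma_{n}$ are summable because $\eta_{n}=(n+2)^{-2}$.
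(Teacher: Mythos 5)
Your proposal is correct and follows essentially the same route as the paper: both decompose the excluded set into the resonant slabs $g_{k}^{n}(\gamma_{n})$, both use Lemma~\ref{expoB} together with the estimate on $V_{n}$ in \eqref{bestimate} to obtain the uniform lower bound $\bigl|\partial_{\lambda}\bigl(l(\lambda+[B_{n}]_{\theta})-k\alpha\bigr)\bigr|\geq 1/2$, and both deduce $\meas g_{k}^{n}(\gamma_{n})=O(\gamma_{n}/|k|^{\tau})$ before summing over $k$ (using $\tau>1$) and over $n$ (using $\gamma_{n}=\gamma\eta_{n}=\gamma(n+2)^{-2}$). The only differences are cosmetic (your ``pullback'' phrasing, the harmless aside about $k=0$, and $(|k|+1)^{-\tau}$ versus $|k|^{-\tau}$), and do not change the argument.
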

\begin{proof}
	The complete proof
	can be found in Section $4.3$ in \cite{WangY17}. To make the paper more self-contained, we give the sketch of the ideas.
	By Lemma~\ref{expoB}, we have
	\begin{equation*}
		\begin{split}
			\Big| \frac{\dif}{\dif \lambda} \big(l(\lambda+[B_{n}(\theta,\lambda)]_{\theta}) - k\alpha\big) \Big|
			& \geq |l| - \left\| [B_{n}(\theta,\xi)]_{\theta} \right\|_{\mathcal{O}_{n-1}}\\
			& \geq |l| - c \left\| V_{n}(\theta,\xi)_{\theta} \right\|_{\mathcal{O}_{n-1}}\\
			& \geq |l| - 4c \varepsilon_{0}^{\frac{1}{3}}
			\geq 2^{-1}
		\end{split}
	\end{equation*}
	for any $ n\geq 0 $, $ l = 1, 2 $ and $ \lambda\in \mathcal{O}_{n-1} $.
	Combining with \eqref{out}, one gets
	\begin{equation*}
		\meas \big(g_{k}^{n}(\gamma _{n})\big)\leq 2\gamma_{n}|k|^{-\tau}.
	\end{equation*}
	Therefore,
	\begin{equation*}
		\begin{split}
			& \meas \big( \bigcup_{0<|k|< K_{0}} g_{k}^{0}(\gamma_{0}) \big) \leq 2 \sum_{0<|k|< K_{0}} \frac{\gamma_{0}}{|k|^{\tau}},\\
			& \meas \big( \bigcup_{K_{n-3} \leq |k|< K_{n}} g_{k}^{n}(\gamma_{n}) \big) \leq
			2 \sum_{K_{n-3} \leq |k|< K_{n}} \frac{\gamma_{n}}{|k|^{\tau}},\quad \forall\, n\geq1.
		\end{split}
	\end{equation*}
	Moreover, note $\gamma_{n} = \gamma_0\eta_{n}$, then we have the following
	\begin{equation*}
		\begin{split}
			& \quad \ 2 \sum_{0<|k|< K_{0}} \frac{\gamma_{0}}{|k|^{\tau}} +
			2 \sum_{n=1}^{\infty} \sum_{K_{n-3} \leq |k|< K_{n}} \frac{\gamma_{n}}{|k|^{\tau}} \\
			& \leq 2 \gamma_{0} \sum_{n=1}^{\infty} \frac{1}{(n+2)^2} \sum_{\kappa=1}^{\infty} \sum_{|k|=\kappa} \frac{1}{|k|^{\tau}} \\
			& \leq 4 \gamma_{0} \sum_{n=1}^{\infty} \frac{1}{(n+2)^2} \sum_{\kappa=1}^{\infty} \frac{1}{\kappa^{\tau}}
			= O(\gamma_{0})
		\end{split}
	\end{equation*}
	by $\tau>1$.
	That is, $ \meas (\mathcal {O} \setminus \mathcal {O}_{*}) = O(\gamma) $.
\end{proof}

\section*{Appendix}\label{sec:appendix}
\begin{lemma}\label{expoB}
	If $\|G\|_{r,\mathcal{O}}=O(\varepsilon_{0})$ and $|\me^{\mathrm{i 2\pi \lambda}}+G|=1+O(\varepsilon_{n})$, then there exist real-valued functions $\rho$ and $B$ satisfying
	\begin{equation}\label{eq}
		\me^{\mathrm{i 2\pi \lambda}} + G(\theta,\lambda) = (1+\rho(\theta,\lambda)) \me^{\mathrm{i} 2 \pi(\lambda+B(\theta,\lambda))}
	\end{equation}	
	with
	\begin{equation*}
		\|\rho\|_{r,\mathcal{O}}=O(\varepsilon_{n}),
		\qquad \|B\|_{r,\mathcal{O}}=O(\varepsilon_{0}).
	\end{equation*}	
\end{lemma}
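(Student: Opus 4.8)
The plan is to obtain $\rho$ and $B$ as, respectively, the modulus defect and the (rescaled) argument of the complex number $\me^{\mi 2\pi\lambda}+G(\theta,\lambda)$, which is a small perturbation of the unimodular number $\me^{\mi 2\pi\lambda}$. First I would factor $\me^{\mi 2\pi\lambda}+G=\me^{\mi 2\pi\lambda}(1+z)$, where $z(\theta,\lambda):=\me^{-\mi 2\pi\lambda}G(\theta,\lambda)$. Since $\me^{-\mi 2\pi\lambda}$ is independent of $\theta$ and its $C^{1}_{W}$-norm in $\lambda$ is at most $1+2\pi$, the Banach-algebra estimate gives $\|z\|_{r,\mathcal{O}}\leq(1+2\pi)\|G\|_{r,\mathcal{O}}=O(\varepsilon_{0})\ll1$, in particular $\|z\|_{r,\mathcal{O}}<\tfrac12$.

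Next I would use that $\Lambda_{r}(\mathbb{T}\times\mathcal{O},\mathbb{C})$ is a Banach algebra under $\|\cdot\|_{r,\mathcal{O}}$ (which follows from \textbf{(H1)} and the monotonicity of $\Lambda$, since $|ab|_{\mathcal{O}}\leq|a|_{\mathcal{O}}|b|_{\mathcal{O}}$ and $\me^{\Lambda(2\pi|k|r)}$ is submultiplicative in $k$), together with the analyticity of $w\mapsto\log(1+w)=\sum_{j\geq1}\frac{(-1)^{j+1}}{j}w^{j}$ on $\{|w|<1\}$. Substituting $w=z$ yields a well-defined $\mathcal{L}:=\log(1+z)\in\Lambda_{r}(\mathbb{T}\times\mathcal{O},\mathbb{C})$ with $\me^{\mathcal{L}}=1+z$ and
\[
\|\mathcal{L}\|_{r,\mathcal{O}}\leq\sum_{j\geq1}\frac{\|z\|_{r,\mathcal{O}}^{j}}{j}=-\ln\big(1-\|z\|_{r,\mathcal{O}}\big)\leq2\|z\|_{r,\mathcal{O}}=O(\varepsilon_{0}).
\]
I would then set $B:=\frac1{2\pi}\mathrm{Im}\,\mathcal{L}$ and $\rho:=|\me^{\mi 2\pi\lambda}+G|-1\;(=\me^{\mathrm{Re}\,\mathcal{L}}-1)$, both real-valued and lying in $\Lambda_{r}(\mathbb{T}\times\mathcal{O},\mathbb{R})$. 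Because $\theta$-wise complex conjugation acts on Fourier coefficients by $\widehat{\bar f}_{k}=\overline{\widehat f_{-k}}$, it preserves $\|\cdot\|_{r,\mathcal{O}}$, so taking real or imaginary parts does not increase the norm; hence $\|B\|_{r,\mathcal{O}}\leq\frac1{2\pi}\|\mathcal{L}\|_{r,\mathcal{O}}=O(\varepsilon_{0})$. For $\rho$ I would \emph{not} use the crude bound coming from $\mathcal{L}$ (which only yields $O(\varepsilon_{0})$) but instead the hypothesis $|\me^{\mi 2\pi\lambda}+G|=1+O(\varepsilon_{n})$ — which in the applications is precisely the $\det=1$ identity of the area-preserving map — to conclude $\|\rho\|_{r,\mathcal{O}}=O(\varepsilon_{n})$. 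Finally I would verify \eqref{eq}: from $\me^{\mathcal{L}}=1+z$ and the splitting $\mathcal{L}=\mathrm{Re}\,\mathcal{L}+\mi\,\mathrm{Im}\,\mathcal{L}$ with $\mathrm{Im}\,\mathcal{L}$ real-valued, one gets $|1+z|=\me^{\mathrm{Re}\,\mathcal{L}}=1+\rho$ and $(1+z)/|1+z|=\me^{\mi\,\mathrm{Im}\,\mathcal{L}}=\me^{\mi 2\pi B}$, whence $\me^{\mi 2\pi\lambda}+G=\me^{\mi 2\pi\lambda}(1+z)=(1+\rho)\me^{\mi 2\pi(\lambda+B)}$.

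The routine but slightly delicate point I would state carefully is the legitimacy of composing the analytic function $\log(1+\cdot)$ with $z$ inside the weighted Banach algebra $\Lambda_{r}(\mathbb{T}\times\mathcal{O},\mathbb{C})$, including the $C^{1}_{W}$ dependence on $\lambda$; this is exactly where \textbf{(H1)} enters. The only other thing to keep straight is the presence of two smallness scales: $B$ inherits the size $O(\varepsilon_{0})$ of $G$, while the sharper estimate $\|\rho\|_{r,\mathcal{O}}=O(\varepsilon_{n})$ is not visible from $\mathcal{L}$ and must be imported from the area-preserving determinant condition that underlies the hypothesis of the lemma.
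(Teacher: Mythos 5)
Your argument is correct and arrives at the same conclusion as the paper, but by a noticeably different computational route. The paper writes $G=G_{1}+\mi G_{2}$ with $G_{1},G_{2}$ real, turns \eqref{eq} into the pair of real equations $(1+\rho)\cos 2\pi(\lambda+B)=\cos 2\pi\lambda+G_{1}$, $(1+\rho)\sin 2\pi(\lambda+B)=\sin 2\pi\lambda+G_{2}$, and divides to obtain the closed formula $\tan 2\pi B=\dfrac{G_{2}\cos 2\pi\lambda-G_{1}\sin 2\pi\lambda}{1+G_{1}\cos 2\pi\lambda+G_{2}\sin 2\pi\lambda}$, from which $\|B\|_{r,\mathcal{O}}=O(\|G\|_{r,\mathcal{O}})$ is read off (implicitly composing with $\arctan$ inside the Banach algebra). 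You instead factor $\me^{\mi 2\pi\lambda}+G=\me^{\mi 2\pi\lambda}(1+z)$ and apply the complex logarithm $\mathcal{L}=\log(1+z)$ via its power series in $\Lambda_{r}(\mathbb{T}\times\mathcal{O},\mathbb{C})$, extracting $2\pi B=\mathrm{Im}\,\mathcal{L}$ and $1+\rho=\me^{\mathrm{Re}\,\mathcal{L}}=|1+z|$. Both proofs rest on the same two ingredients — the Banach-algebra property of $\Lambda_{r}$ coming from \textbf{(H1)}, and composition with a single analytic germ vanishing at the origin — but your packaging is cleaner: it produces $\rho$ and $B$ simultaneously, makes explicit the norm-nonincrease of taking $\mathrm{Re}$ and $\mathrm{Im}$ (via $\|\bar f\|_{r,\mathcal{O}}=\|f\|_{r,\mathcal{O}}$), and avoids any trigonometric branch bookkeeping. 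You also correctly flag the two-scale phenomenon ($\rho$ of size $\varepsilon_{n}$, $B$ of size $\varepsilon_{0}$) and that the finer bound on $\rho$ must come from the hypothesis (in applications, the $\det=1$ identity), not from $\mathcal{L}$ itself; the paper does the same, stating simply that ``the function $\rho$ is obvious.'' The one imprecision you inherit from the lemma's phrasing — treating the pointwise modulus estimate $|\me^{\mi 2\pi\lambda}+G|=1+O(\varepsilon_{n})$ as if it directly gives the $\|\cdot\|_{r,\mathcal{O}}$ bound on $\rho$ — is shared by the paper's proof and is harmless in context, since the determinant identity underlying the hypothesis actually holds in $\Lambda_{r}$.
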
	
\begin{proof}
	The result can be obtained directly from the following graph.
	\begin{figure}[htbp]
		\includegraphics[width=0.5\textwidth]{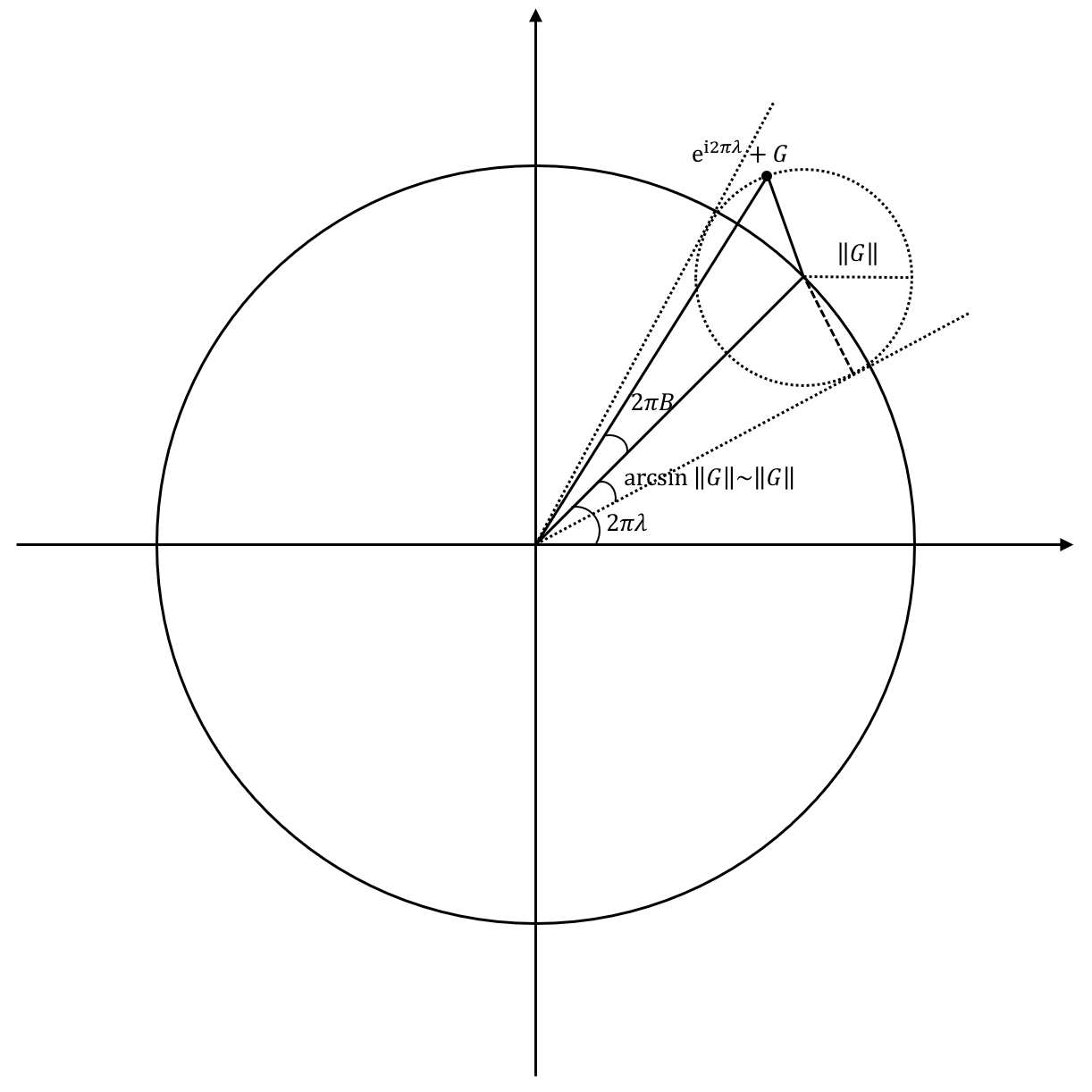}
		\caption{The increment of the argument $ 2\pi B $.}
	\end{figure}
	In detail, the function $\rho$ is obvious since the modulus of $\me^{\mathrm{i 2\pi \lambda}} + G$ is $1+O(\varepsilon_{n})$. We now find the function $B$.
	Let $G=G_1+\mathrm{i} G_2$ with $G_i \in \Lambda_{r}( \mathbb{T} \times \mathcal{O}, \mathbb{R})$ and $\|G_i\|_{r,\mathcal{O}} = O(\|G\|_{r,\mathcal{O}}),\,(i=1,2)$. Then, \eqref{eq} is equivalent to
	\begin{equation*}
		\left\{
		\begin{array}{c}
			(1+\rho)\cos 2\pi(\lambda+B) = \cos 2\pi\lambda+G_1,\\
			(1+\rho)\sin 2\pi(\lambda+B) = \sin 2\pi\lambda+G_2.
		\end{array}
		\right.
	\end{equation*}
	By direct calculation, we have
	\begin{equation*}
		\tan 2\pi B=\frac{G_2 \cos 2\pi\lambda - G_1 \sin 2\pi \lambda}{1+G_1\cos 2\pi\lambda+G_2\sin 2\pi\lambda}.
	\end{equation*}
	Hence $ \|B\|_{r,\mathcal{O}} = O(\|G\|_{r,\mathcal{O}})=O(\varepsilon_{0}) $.

\end{proof}

At the end, we present the proof of Lemma~\ref{bettersmalldivisor} for small divisor condition.
\begin{proof}[Proof of the Lemma~\ref{bettersmalldivisor}]
	We prove the lemma in two cases.
	
	\textbf{Case 1.} $\overline{Q}_{n+1}\leq  Q_{n+1}^{2\tau}$. It follows that $|k|\leq K = \overline{Q}_{n+1}^{\frac{1}{2}}\leq Q_{n+1}^{\tau}$. Therefore, one has
	\begin{equation*}
		\big| \me^{\mathrm{i} 2\pi (l \Omega(\lambda) - k\alpha)}-1\big| = 2\big| \sin \pi (l \Omega(\lambda) - k\alpha) \big|
		\geq 4 \| l \Omega(\lambda) - k \alpha \|_{\mathbb{T}}
		\geq \frac{4\gamma}{|k|^{\tau}}
		\geq \frac{4\gamma}{Q_{n+1}^{\tau^2}}
	\end{equation*}
	for $ \Omega(\lambda) \in DC_{\alpha}(\gamma, \tau, K, \mathcal{O}) $ and $l=1,2.$
	
	\textbf{Case 2.} $\overline{Q}_{n+1}> Q_{n+1}^{2\tau}$. 	 In this case, for any $|k|\leq K=\overline{Q}_{n+1}^{\frac{1}{2}}$, we decompose it as
	\begin{equation*}
		k=\widetilde{k}+mQ_{n+1},
	\end{equation*}
	where $|\widetilde{k}|<Q_{n+1}$. Then,
	\begin{equation*}
		|mQ_{n+1}|\leq |k|+|\widetilde{k}|\leq \overline{Q}_{n+1}^{\frac{1}{2}}+Q_{n+1},
	\end{equation*}
	which yields
	\begin{equation*}
		|m|<\overline{Q}^{\frac{1}{2}}_{n+1}Q_{n+1}^{-1}+1.
	\end{equation*}
	It follows that
	\begin{align*}
		\big| \me^{\mathrm{i} 2\pi (l \Omega(\lambda) - k\alpha)} - 1 \big|
		& \geq 4 \|l \Omega(\lambda) - k\alpha\|_{\mathbb{T}}\\
		& = 4\|l \Omega(\lambda) - \widetilde{k}\alpha-mQ_{n+1}\alpha\|_{\mathbb{T}}\\
		& \geq 4 \|l \Omega(\lambda) - \widetilde{k}\alpha\|_{\mathbb{T}} - 4 |m|\|Q_{n+1}\alpha\|_{\mathbb{T}}\\
		& \geq4 \gamma Q_{n+1}^{-\tau} - 4 (\overline{Q}^{\frac{1}{2}}_{n+1}Q_{n+1}^{-1}+1)
\overline{Q}_{n+1}^{-1}\\
		& \geq \frac{4 \gamma}{Q_{n+1}^{\tau}}-\frac{8}{Q_{n+1}^{\tau+1}}
		 \geq \frac{2 \gamma}{Q_{n+1}^{\tau}}
		\geq \frac{4 \gamma}{Q_{n+1}^{\tau^2}}
	\end{align*}
	since we choose $ Q_{n+1} $ big enough such that $ Q_{n+1}\geq \frac{4}{\gamma} $.
\end{proof}


\section*{Acknowledgments}
 H. Cheng was
supported by National Natural Science
Foundation of China 
(No.12001294), S. Wang was supported by Shandong Provincial Natural Science Foundation of China (Grant Nos. ZR2021QA022). F.Wang was supported by the National Natural Science
Foundation of China (No.12101434) and the Fundamental Research Funds of Sichuan Normal University (KY. 20200921).



\bibliographystyle{plain}
\bibliography{20220718_ultra_map_liouvillean-reference}

\end{document}